\newcommand{\mel}{\MoveEqLeft}
\newtheorem{theorem}{Theorem}[section]
\newtheorem{remark}[theorem]{Remark} 
\newtheorem{example*}{Example\textsuperscript{*}}
\newtheorem{proposition*}{Proposition\textsuperscript{*}}
\newtheorem{corollary}[theorem]{Corollary}
\newtheorem{corollary*}{Corollary\textsuperscript{*}}
\newtheorem{proposition}[theorem]{Proposition}
\newtheorem{lemma}[theorem]{Lemma}
\numberwithin{equation}{section}
\newcommand{\1}{\mathbbm{1}}
\newcommand{\la}{\langle}
\newcommand{\ra}{\rangle}
\renewcommand{\Re}{\operatorname{Re}}
\newcommand{\psiin}{\psi_{\mathrm{int}}}
\newcommand{\psiout}{\psi_{\mathrm{ext}}}
\newcommand\eps{\epsilon}
\newcommand\R{\mathbb{R}}
\newcommand\HH{\mathbb{H}}
\newcommand\T{\mathbb{T}}
\newcommand\Z{\mathbb{Z}}
\newcommand\F{\mathcal{F}}
\renewcommand{\S}{\mathcal{S}}
\newcommand{\K}{\mathcal{K}}
\newcommand{\cL}{\mathcal{L}}
\newcommand\M{\mathcal{M}}
\newcommand{\Chi}{\mathcal{X}}
\newcommand\sym{\text{sym}}
\newcommand\N{\mathbb{N}}
\newcommand\grad{\nabla}
\renewcommand{\div}{\grad\cdot}
\DeclareMathOperator{\const}{const}
\DeclareMathOperator{\curl}{curl}
\DeclareMathOperator{\spann}{Span}
\DeclareMathOperator{\tr}{tr}
\newcommand\norm[1]{\left\lVert #1 \right\rVert}
\newcommand\scalar[2]{\langle #1,#2 \rangle}
\newcommand\de{\partial}
\renewcommand{\div}{\operatorname{div}}
\newcommand\dd{\,\mathrm{d}}
\newcommand\dx{\,\mathrm{d}x}
\newcommand\dy{\,\mathrm{d}y}
\newcommand\dt{\,\mathrm{d}t}
\newcommand\ds{\,\mathrm{d}s}
\newcommand\da{\,\mathrm{d}\alpha}
\newcommand{\mres}{\mathbin{\vrule height 1.6ex depth 0pt width
0.13ex\vrule height 0.13ex depth 0pt width 1.3ex}}
\newcommand\Span{\mathrm{span}}
\newcommand{\vin}{\varphi_{\mathrm{int}}}
\newcommand{\vout}{\varphi_{\mathrm{ext}}}
\newcommand{\rin}{\rho_{\mathrm{int}}}
\newcommand{\rout}{\rho_{\mathrm{ext}}}
\newcommand{\Ha}{\mathcal{H}}
\newcommand{\D}{\mathcal{D}}
\newcommand{\Din}{\mathcal{D}_{\mathrm{int}}}
\newcommand{\Dout}{\mathcal{D}_{\mathrm{ext}}}
\newcommand{\bin}{b_{\mathrm{int}}}
\newcommand{\bout}{b_{\mathrm{ext}}}
\newcommand{\We}{\mathrm{We}}
	\address{Calle de Nicolas Cabrera 13-15}
	\address{Einsteinstr. 62}
\begin{document}

\title{Steady ring-shaped vortex sheets}

\classification[35C07, 35J25]{76B47}

\keywords{Vortex rings; Euler equations; free-boundary; traveling wave solution; vortex sheet; overdetermined boundary problem; implicit function theorem}

\begin{abstract}
In this work, we construct traveling wave solutions to the two-phase Euler equations, featuring a vortex sheet at the interface between the two phases. The interior phase exhibits a uniform vorticity distribution and may represent a vacuum, forming what is known as a \emph{hollow vortex}. These traveling waves take the form of ring-shaped vortices with a small cross-sectional radius, referred to as \emph{thin rings}.  Our construction is based on the implicit function theorem, which also guarantees local uniqueness of the solutions. Additionally, we derive asymptotics for the speed of the ring, generalizing the well-known Kelvin--Hicks formula to cases that include surface tension.\end{abstract}

\maketitle

\section{Introduction}

We focus on free boundary problems for the incompressible Euler equations in $\R^3$, with particular attention to constructing steady vortex ring solutions. Vortex rings are toroidal regions where the vorticity is concentrated. Within the ring, the fluid rapidly rotates around a central curve, while the ring itself moves through the surrounding fluid in a direction orthogonal to the plane containing the central curve.

When the ring consists of a different phase than the surrounding fluid, such as an air bubble ring in water, a discontinuity in shear velocity occurs along the ring's surface. This discontinuity creates a singularity in the vorticity distribution, turning the surface into a vortex sheet.

The presence of vortex sheets presents a significant challenge in the mathematical treatment of the Euler equations. Vortex sheets are considered ill-posed in the Hadamard sense because the Kelvin--Helmholtz instabilities cause small fluctuations around a steady state to grow exponentially over time \cite{Ebin88,Wu06}. However, this situation can be mitigated if surface tension is present, as it exerts a regularizing effect. The Cauchy problem for the Euler equations with surface tension is locally in time well-posed \cite{Ambrose03,
ambrose2007well,ChengCoutandShkoller08,ShatahZeng08,ShatahZeng11}. Yet,  singularities may occur in finite time \cite{castro2012finite,castro2013finite,Coutand19}.

The aim of this work is to construct steady, ring-shaped vortex sheets with a small cross-sectional radius $ \bar \eps$, considering cases both with and without surface tension. Inside the ring, we examine either a uniform vorticity distribution or an irrotational or vacuum setting, which leads to the formation of hollow vortex rings. When surface tension is present, the strength of inertia forces compared to surface tension will be measured in terms of the non-dimensional Weber number \eqref{501}. This study is the first to rigorously establish vortex rings with surface tension as well as hollow vortex rings.

The first explicit example of a steady vortex ring solution to the Euler equations in $\R^3$ was provided by Hill in 1894 \cite{Hill1894}, who actually constructed a \emph{spherical} vortex. The first \emph{toroidal} examples, though non-explicit, were found by Fraenkel in 1970 \cite{fraenkel1970steady} and Norbury in 1972 \cite{Norbury72}. Fraenkel’s work considers rings with a small cross-sectional radius, similar to ours, while Norbury’s rings are thicker and represent toroidal perturbations of Hill’s spherical vortex. Numerous subsequent constructions have been developed for vortex rings, including those with arbitrary cross-sectional radii; see, for instance, \cite{fraenkel1974global,BergerFraenkel80,friedman1981vortex,ambrosetti1989existence,Yang95,BadianiBurton01,Burton03,MR4404610,CaoQinYuZhanZou22+}. Hollow vortices in 2D have been constructed e.g.\ in \cite{chen2023desingularization}. In a companion paper with Niebel \cite{NiebelMeyerSeis24}, we establish near-spherical (non-toroidal) perturbations of Hill's vortex, featuring a vortex sheet interface similar to the one in the present work. Additionally, there are many examples of traveling wave solutions in various fluid contexts, including water waves and viscous flows, see e.g.\ \cite{leoni2023traveling,haziot2022traveling,constantin2004exact,berti2023time,hassainia2022multipole,constantin2016global} and the references therein. 

In the regime of small cross-sectional radius $\bar \eps$, it is possible to derive asymptotic expressions for various quantities of interest. Most notably,  when surface tension is absent, the (constant) speed $\bar W$ of such rings  is well-known to obey the asymptotic Kelvin--Hicks formula
\begin{equation}
    \label{401}
\bar W= \frac{\bar{b}_{\mathrm{ext}}}{4\pi R} \left(\log \frac{8R}{\bar \eps} -c\right) +o(1),
\end{equation}
if $\eps=\bar \eps/R\ll1$. Here,  $\bar{b}_{\mathrm{ext}}$ is the Kelvin circulation (see \eqref{402} below), $R$ the inner radius of the ring, and $c$ is a number dependent on the vorticity distribution in the interior. The expression was first discovered by Helmholtz in 1858  \cite{helmholtz1858integrale}. In 1867, Kelvin determined the value $c=1/4$ for a uniform vortex  \cite{Kelvin1867} and in 1883,  Hicks found the value $c=1/2$ for a hollow vortex   \cite{Hicks1883}. This formula was later rigorously validated by steady solutions in \cite{fraenkel1970steady}. For non-steady solutions, the leading order term in \eqref{401} was verified in \cite{benedetto2000motion,jerrard2017vortex}. In our work, we will recover the Kelvin--Hicks formula within our framework and extend it to include the case with surface tension. For a detailed survey of the history of vortex theory since  Helmholtz's time, we refer to \cite{meleshko2012vortex}.

Vortex rings with a small cross-sectional radius occasionally occur in nature, such as the smoke rings recently emitted by Sicily’s Mount Etna, which quickly captured global attention, or the air bubble rings created by dolphins and beluga whales. These fascinating structures can also be produced by tobacco smokers, scuba divers, or experimentalists in laboratories. Our analysis provides a mathematical construction of such bubble rings.

\subsection{Original mathematical description}

The model that comprises all these situations is the two-phase Euler equation. We denote by $\Din(t)$ the open domain of the interior  phase with the surface $\S(t)=\partial \Din(t)$ and by   $\Dout(t) = (\overline{\Din(t)})^c$ the open domain of the exterior phase, so that 
\[ 
\R^3 = \Din(t) \cup \S(t)\cup \Dout(t).
\]

Inside of the two phases, the evolution is described by the incompressible Euler equations,
\begin{align}
\bar \rho(\partial_t u + u\cdot \grad_x u) + \grad_x p & =0\quad \mbox{in }\R^3\setminus\S(t),\label{1}\\
\div_x u&=0\quad \mbox{in }\R^3\setminus \S(t),\label{2}
\end{align}
describing the conservation of momentum and mass. Here, $u=u(t,x)\in\R^3$ is the velocity of the fluid, $p=p(t,x)\in\R$ is the pressure, and $\bar \rho=\bar \rho(t,x)\in\R_{\ge0}$ is the mass density.  The latter is assumed to be constant in the respective phases so that $\Din(t) = \{\bar \rho(t)= \rho_\mathrm{int}\}$ and $\Dout(t)=\{\bar \rho(t)= \rho_{\mathrm{ext}}\}$. We will assume that $\rout\in\R_{>0}$ with $\rout\ge \rin$.

Across the interface, the tangential velocity component may experience a discontinuity (turning $\S(t)$ into a vortex sheet), though its normal component must be continuous,
\begin{equation}\label{4}
    [u\cdot n]  = 0\quad \mbox{on }\S(t).
\end{equation}
Here $[f]= f_{\mathrm{int}} - f_{\mathrm{ext}}$ measures the jump across the interface  of a discontinuous quantity $f= f_{\mathrm{ext}} \1_{\Dout} + f_{\mathrm{int}}\1_{\Din}$, and $n$ is the unit normal vector pointing from $\Din (t)$ into $\Dout(t)$.
The Young--Laplace equation 
\begin{equation}
[p] = \bar \sigma H \quad \mbox{on }\S(t),\label{3}
\end{equation}
 in which $\bar \sigma$ is the surface tension and $H$ is the mean curvature, relates the pressure difference to the geometry of the surface. 
We are using the convention that the mean curvature is the sum of the two principal radii $H=(\kappa_1+\kappa_2)$, chosen to be positive for convex $\Din(t)$.

Finally, the two phases itself are advected by the flow, 
\begin{equation}\label{5}
    \partial_t\bar \rho +u\cdot \grad\bar\rho = 0\quad \mbox{in }\R^3.
\end{equation}

 The model reduces to the free boundary Euler equation if the interior phase is a vacuum, $ \rho_\mathrm{int}=0$, in which case the interior pressure is a constant, and the jump condition in \eqref{3} can be rewritten as $-p = \bar \sigma H$, because the pressure is defined only up to an additive constant. Allowing for $ \rho_\mathrm{int}=0$ in our choice of densities, it is enough to consider the general two-phase setting with jump condition \eqref{3} in the following.

Steady vortex rings are traveling wave solutions, moving at a constant speed $\bar W$ in a fixed direction, say, $e_z$, i.e.,
\[
u(t,x)  = U(x-t\bar W e_z),\quad p(t,x) = P(x-t\bar W e_z).
\]
In the new variables, the two phases and their interface $\S$ are frozen in time and the momentum equation becomes
\[
\bar \rho (U-\bar W e_z)\cdot \grad_x U + \grad_x P = 0\quad \mbox{in }\R^3\setminus \S.
\]
In this formulation, $\bar \rho$ is a two-valued constant. The relative velocity   $U- \bar We_z$  is now tangential to the ring's surface,
\begin{equation}
    \label{400}
(U-\bar W e_z)\cdot n=0\quad \mbox{on }\S.
\end{equation}

  It is not restrictive to assume a cylindrical symmetry about the $e_z$ axis for our vortex ring, so that
\[
U = U(r,z),\quad  U = U^re_r + U^z e_z,\quad P = P(r,z).
\]
There is no swirl velocity pointing in the angular direction, and for symmetry reasons, the velocity is tangential at the $z$-axis, $U^r=0$.
We caution the reader that in these \emph{axi-symmetric} variables, the velocity is not divergence-free with respect to $r$ and $z$. Instead, it holds that
\begin{equation}\label{408}
\partial_r (rU^r) + \partial_z(rU^z)=0.
\end{equation}
We will denote by $\Omega$ the cross-sectional slice of the ring $\D_\mathrm{int}$ that lies in the meriodial half plane $\HH = \{(r,z): r>0\}$.

In two-dimensional steady flows, a well-known advantage is that the pressure and the kinetic energy density are related by the Bernoulli equation (see, e.g., Chapter 1.9 in \cite{Saffman92}) along every streamline. Specifically, this relationship implies that
\begin{equation}
    \label{412}
[ {\bar \rho} (U\cdot \tau -\bar W \tau \cdot e_z)^2 + 2P ] =\bar \nu\quad \mbox{on }\partial \Omega
\end{equation}
for some $\bar \nu\in\R$, which we will refer to as the \emph{Bernoulli constant}. Here and in the following, $\tau$ is the tangent on $\partial \Omega$ obtained from the normal $n$ via a counter-clockwise rotation by 90 degrees, so that $\tau=n^{\perp}$. The above identity will be used to eliminate the pressure term in the  Young--Laplace equation \eqref{3}, see \eqref{26} below.

We remark for later reference that in the axisymmetric setting,  the mean curvature $H$ of the two-dimensional cylindrical surface  $\S$ reduces  to the curvature $\kappa$ of the one-dimensional surface $\partial \Omega$ via 
\begin{equation}\label{413}
H   =\frac{\div(rn)}r = \div n + \frac{n\cdot e_r}r = \kappa + \frac{n\cdot e_r}r.
\end{equation}

 It remains to introduce two quantities that are of considerable importance in vortex theory. The first is the  \emph{Kelvin circulation} $\bar{b}_{\mathrm{ext}}$ around the vortex ring, which is computed along the boundary curve $\partial \Omega$ of the vortex cross-section,
\begin{equation}
    \label{402}
\bar{b}_{\mathrm{ext}} = - \int_{\partial \Omega} U_{\mathrm{ext}}\cdot \tau\dd\Ha^1.
\end{equation}
It describes the cumulative shear velocity around the ring and is proportional to the ring's speed given by the Kelvin--Hicks formula \eqref{401}. By Kelvin's circulation theorem, the circulation along any (unsteady) evolution is an integral of the Euler equations.
The second quantity is the  \emph{interior circulation},
\begin{equation}\label{402a}
\bar{b}_{\mathrm{int}}  = \int_{\Omega} \curl U\dd (r,z) = \int_{\Omega} (\partial_z U^r-\partial_r U^z)\dd x = - \int_{\partial \Omega} U_{\mathrm{int}}\cdot \tau\dd\Ha^1,
\end{equation}
 where the last identity can be easily verified by an application of Stokes' theorem. 
It is evident that in the regular setting, in which the tangential velocity does not exhibit a jump discontinuity, both quantities $\bar{b}_{\mathrm{int}}$ and $\bar{b}_{\mathrm{ext}}$ agree. In our construction, we will consider the situation in which the fluid is irrotational outside of the ring and the vorticity is uniform in its interior. Because it is the potential vorticity $r^{-1}\curl U$ that is transported by the axisymmetric Euler equations rather than the vorticity itself (see, for instance, Chapter 1.5 in \cite{Saffman92}), it is natural to assume that $\curl U = \bar \xi r$ for some $\bar \xi\in\R$. In the case where the  cross-sectional radius $\bar \eps$ of the vortex ring is small compared to the inner radius $R$, so that $\Omega$ is comparable to $ B_{\bar \eps}(R,0)$, it must then hold that
\begin{equation}
    \label{411}
    \bar{b}_{\mathrm{int}} \approx \pi R\bar \eps^2\bar \xi.
\end{equation}

The problem under consideration is controlled by two physical non-dimensional  parameters. One is the circulation-based inertia ratio
\begin{equation}
    \label{500}
\rho = \frac{1}{(4\pi)^2}\frac{\bar{b}_{\mathrm{int}}^2}{\bar{b}_{\mathrm{ext}}^2}\frac{\rin}{\rout},
\end{equation}
which measures the relative strength of the interior vs. exterior vortex motion. The numerical prefactor is included for notational convenience in our later analysis. The other parameter is the circulation-based  Weber number (which may depend on $\eps$)
\begin{equation}
    \label{501}
\mathrm{We} =\frac{\rout \bar{b}_{\mathrm{ext}}^2}{\bar \eps\bar \sigma},
\end{equation}
which measures the inertial force of the ambient fluid relative to the surface tension.

 The goal of this work is to find an axisymmetric traveling wave vortex ring solution to the free-boundary Euler equation \eqref{1}, \eqref{2}, \eqref{4}, \eqref{3}, \eqref{5}, with cross-sectional domain  $\Omega$ (in $\HH$) that is close to a disc $B_{\bar \eps}(R,0)$. We restrict to small cross-sectional radii, $\eps=\bar \eps/R\ll1$.

We present an initial version of our main result, where we intentionally remain somewhat non-specific. For a mathematically precise statement, please refer to Theorem \ref{T2}.

\begin{theorem}[First non-specific formulation]
    \label{Thm1}
    Let $\bar{b}_{\mathrm{int}}\in\R$,   $\bar{b}_{\mathrm{ext}}\in\R_{>0}$, $\rin\in \R_{\ge0} $, $\rout\in \R_{>0}$, $R\in\R_{>0}$, $\bar \eps\in\R_{>0}$, and $\bar \sigma\in\R_{\ge 0}$ be given in such a way that $\rho$ defined in \eqref{500} is fixed and that the Weber number $\We\in(0,\infty]$ defined in \eqref{501} is a $C^1$ function of the aspect ratio $ \eps=\bar \eps/R$ that is  either identical infinity, $\We = \infty$, or it is strictly finite, $\We_{\eps}\in(0,\infty)$,  with
    \begin{gather}
            \lim_{ \eps\to 0}  \frac{\eps}{\We_{ \eps}}=0,\label{404}\\
            \lim_{ \eps\to 0} \We_{\eps}\in[0,\infty)\setminus \left(\frac1{4\pi^2} +4\rho\right)^{-1} \N_{\ge 3},\label{405}\\
             \eps |\frac{\dd}{\dd  \eps} \We_{ \eps}|\lesssim  \We_{ \eps} .\label{406}
    \end{gather}
    Then there exists an $ \eps_0\in\R_{>0}$ such that for all $ \eps\in(0, \eps_0)$, there is a unique axisymmetric traveling wave vortex ring solution with cross-sectional domain $\Omega$ close to $B_{\bar \eps}(R,0)$. The speed of the ring is approximately given by
    \begin{equation}
        \label{407}
        \bar W =  \frac{\bar{b}_{\mathrm{ext}}}{4\pi R}\left(\log\left(\frac{8R}{\bar \eps}\right)-\frac{1}{2} + \frac14\frac{\bar{b}_{\mathrm{int}}^2}{\bar{b}_{\mathrm{ext}}^2} \frac{ \rho_{\mathrm{int}}}{ \rho_{\mathrm{ext}}}\right) +\frac{\pi R}{\bar{b}_{\mathrm{ext}}\rho_{\mathrm{ext}}}\bar \eps \bar\sigma_{\eps} +o(1),
    \end{equation}
    as $\eps=\bar\eps/R\ll1$.
 \end{theorem}

The theorem provides the first rigorous construction of vortex rings with surface tension, as well as hollow vortex rings. The asymptotic formula for the speed in \eqref{407} generalizes the Kelvin--Hicks formula. Specifically, as mentioned earlier, in the one-fluid case where $\rho_{\mathrm{int}} =\rho_{\mathrm{ext}}$  and $\bar \sigma = 0$, the Kelvin circulation \eqref{402} and the interior circulation \eqref{402a} agree, $\bar{b}_{\mathrm{int}} \approx \bar{b}_{\mathrm{ext}}$. This allows us to recover the Kelvin--Hicks formula \eqref{401} with the constant $c = 1/4$. Similarly, if the interior is irrotational ($\bar{b}_{\mathrm{int}} = 0$) or a vacuum ($\rho_{\mathrm{int}} = 0$), we find that $c = 1/2$. The formula including surface tension was derived (non-rigorously) in Section 4 of \cite{MooreSaffman72} for a hollow vortex.

The uniqueness statement made in the theorem is intentionally kept vague. We kindly refer the impatient reader to Theorem \ref{T2} for more details on the solution class.

The condition on the (finite) Weber number in \eqref{404}   ensures that $\We_{  \eps}$ decays not faster than $\eps$ for $ \eps\ll1$. This lower bound is necessary to make sure that the linearized operator (see \eqref{Fourier sym}) does not degenerate. For technical reasons, we have to exclude in \eqref{405} that the Weber number blows up. For uniformly finite Weber numbers, a discrete set of values is excluded if $\We_{\eps}$. For these values, the linear operator associated with the jump condition \eqref{3} will degenerate, which we suspect results in a saddle point bifurcation. For a thorough explanation of this exclusion,  we kindly refer the reader again to Theorem \ref{T2} and the subsequent discussion.
The growth condition in \eqref{406} is rather mild and is satisfied by any reasonable monotone function in the specified range.

We consider it as important to notice that the choice $ \We_{\eps}\sim 1/|\log { \eps} |$ is admissible under the conditions \eqref{404}, \eqref{405}, and \eqref{406}. This scaling is critical for rings with small cross-sectional radius, $ \eps=\bar \eps/R\ll1$, as it balances both kinetic  and surface energy in the two-phase Hamiltonian 
\[
 \frac12\int_{\R^3}\bar \rho |u|^2\dd x + \bar \sigma\Ha^2(\S)   
\]
associated with  the Euler system \eqref{1}, \eqref{2}, \eqref{4}, \eqref{3}, \eqref{5}. Indeed, the kinetic energy of a thin ring grows asymptotically as $0.5\bar{\rho}_{\mathrm{ext}}\bar{b}_{\mathrm{ext}}^2R|\log  \eps|$, see, e.g., Section 10.2 in \cite{Saffman92}, while the surface area decreases proportionally to  $4\pi^2 R\bar \eps$.
Another relevant scaling is $
\We\sim 1$, which is the critical scaling under which the limiting problem in 2D is scaling-invariant. We will elaborate on this later.

\subsection{An overdetermined boundary problem}

To simplify the notation in the following, we will interpret all differention symbols with respect to the cylindrical variables $(r,z)$, so that, for instance, $\grad = (\partial_r,\partial_z)^T$, $\div = \grad\cdot$, and $\curl = -\grad^{\perp}\cdot$.

It is a standard approach to rewrite the steady Euler system as an elliptic problem. Since $rU(r,z)$ is divergence-free by \eqref{408}, there exists a function $\psi$ such that $U = r^{-1}\grad^{\perp}\psi$ in $\R^3$. This function is commonly referred to as the \emph{stream function}. It is determined by the following elliptic Dirichlet problem:
\begin{align}
 - \div\left(\frac1r\grad\psi\right) &= \curl U\quad \mbox{in }\HH,\label{409}\\
\psi&=0\quad \mbox{on }\partial \HH, \label{30}\\
\frac1r|\grad\psi|&\to 0\quad\mbox{as }|(r,z)|\to \infty.\label{31}\\
\lim_{r\rightarrow 0}\frac{1}{r}|\nabla\psi|&<\infty.\label{32b}
\end{align}
Here the last condition encodes the fact that $u$ should be non-singular at the $z$-axis.
We remark that the Dirichlet boundary condition at the $z$-axis is consistent with the requirement that the velocity must be tangential there so that $\partial_z \psi=0$.
As mentioned in the introduction, the vorticity $\curl U$ exhibits a singularity at the surface $\partial \Omega$ of the vortex ring's cross-section, characterized by the (negative) jump in the tangential velocity component.  The magnitude of the shear velocities along this surface is not known a priori, and determining them (implicitly) will be part of our analysis.

As outlined above, to simplify the problem, we focus on uniform vorticity distributions inside of the ring, leading to the assumption $\curl U = r\bar \xi$ in $\Omega$ for some constant $\bar \xi \in \R$, and we suppose that the fluid outside of the ring is irrotational, so that $\curl U=0$ in $\overline{\Omega}^c$. We thus relax \eqref{409} to 
\begin{align}\label{27}
    -\div\left(\frac1r\grad \psi\right) &=\begin{cases} \bar \xi r&\quad\mbox{in }\Omega,\\
  0&\quad\mbox{in }\overline \Omega^c,\end{cases}
\end{align}
where  the  complement is taken with respect to the half plane $\HH$. 
From the tangential flow condition in \eqref{400} we also  derive a Dirichlet boundary condition at the surface. Since $\psi - \bar W/2 r^2$ must be constant along $\partial \Omega$, there exists a  $\bar \gamma\in\R_{\ge 0}$ so that
\begin{equation}\label{29}
    \psi  - \frac{\bar W}2r^2 -\bar \gamma=0 \quad\mbox{on }\partial \Omega.
\end{equation}
In the literature, $\bar \gamma$ is referred to as the \emph{flux constant}. The non-negativity of $\bar \gamma$ is a consequence of the maximum principle.

Next, we reformulate the Young--Laplace equation using the stream function. By replacing the pressure jump in \eqref{3} with the jump in the squared shear velocity, as given by the Bernoulli-type equation \eqref{412}, and expressing the velocity in terms of the stream function, we derive the following updated jump condition:
\begin{equation}\label{26}
\left[  \bar \rho\left( \frac{1}{r}\partial_n \psi -  \bar W n\cdot e_r\right)^2\right] + 2\bar \sigma H =\bar \nu \quad \mbox{on }\partial \Omega.
\end{equation}
Notice, that we used the identity $\tau\cdot e_z = n\cdot e_r$ to rewrite the speed term.

Taking into account equations \eqref{30}, \eqref{31}, \eqref{27}, \eqref{29}, and \eqref{26}, the elliptic problem for the stream function becomes an overdetermined boundary value problem. For a given domain $\Omega$ and fixed velocities $\bar W$ and flux constants $\bar\gamma$, the interior domain Dirichlet problem \eqref{27} and \eqref{29}, together with the exterior domain Dirichlet problem \eqref{27}, \eqref{30}, and \eqref{31}  would already uniquely determine $\psi$. Therefore, the requirement that the jump condition \eqref{26} is additionally satisfied imposes a condition on the shape of $\Omega$.

Our analysis shows that this problem is uniquely solvable for a class of domains $\Omega$ close to $B_{\bar \eps}(R,0)$. It also determines the unknowns $\bar W, \bar \gamma$, and $\bar \nu$.  

We now present a second still non-specific version of our main theorem, which leads to Theorem \ref{Thm1} by choosing $U=r^{-1} \grad^\perp\psi$.

\begin{theorem}[Second non-specific formulation]
    \label{Thm2}
    Let $\bar{b}_{\mathrm{int}}, \bar{b}_{\mathrm{ext}}, \rin,\rout$,  $R$, $\bar \eps$ and $\bar \sigma_{ \eps}$ be given as in Theorem \ref{Thm1}. Then there exists an $ \eps_0\in\R_{>0}$ such that for all $ \eps \in (0, \eps_0)$, there exists a unique solution $(\Omega, \bar W, \bar \gamma, \bar \nu,\psi)$ to the overdetermined boundary problem with $\Omega$ close to $B_{\bar \eps}(0,R)$. The speed $\bar W$ is approximately given by formula \eqref{407}. The leading order asymptotics of the flux constant and the Bernoulli constant are known.
 
\end{theorem}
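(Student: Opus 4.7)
The plan is to recast the overdetermined problem as a nonlinear operator equation $F=0$ and solve it by the implicit function theorem around an explicit leading-order configuration at $\eps=0$. I would first rescale by $(r,z)=(R+\bar\eps x,\bar\eps y)$, so that $\Omega$ is close to the unit disk $B_1(0)$ and the axisymmetric operator $-\div(r^{-1}\grad\cdot)$ becomes a $\bar\eps$-perturbation of $-R^{-1}\laplace$. The free boundary is then parametrized as a graph
\[
\partial\Omega_h=\bigl\{\bigl((1+h(\theta))\cos\theta,(1+h(\theta))\sin\theta\bigr):\theta\in\T\bigr\},
\]
with $h$ small in the space of even $C^{2,\alpha}(\T)$ functions (encoding the $z\mapsto -z$ symmetry). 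Prescribing the cross-sectional area and the center of mass fixes the zeroth and first Fourier coefficients of $h$, leaving $h$ in a mean-zero, non-translation subspace together with the three scalar unknowns $\bar W$, $\bar\gamma$, $\bar\nu$.

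For each fixed $(h,\bar W,\bar\gamma,\eps)$ I would then solve the two elliptic problems separately. The interior problem \eqref{27}, \eqref{29} is a small perturbation, in rescaled variables, of a flat Dirichlet--Poisson problem on $B_1$ with an explicit quadratic-plus-constant reference solution. The exterior problem \eqref{27}, \eqref{30}, \eqref{31} is singular as $\eps\to 0$: its leading behaviour is governed by the axisymmetric Green's function $G_R$ of $-\div(r^{-1}\grad\cdot)$ on $\HH$ with pole at $(R,0)$. A preparatory lemma would expand $G_R$ near its singularity as $\frac{1}{2\pi R}\log\frac{1}{|x-(R,0)|}+\text{regular}$, the regular part producing the $\log(8R)$ factor of the Kelvin--Hicks formula; the circulation identity \eqref{402} then relates $\bar\gamma$ (equivalently $\bar W$) to the prescribed $\bar b$. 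With $\psiin,\psiout$ in hand, define the jump-condition residual
\[
F(h,\bar W,\bar\gamma,\bar\nu;\eps)=\left[\bar\rho\Bigl(\tfrac{1}{r}\de_n\psi-\bar W\,n\cdot e_r\Bigr)^2\right]+2\bar\sigma_\eps H-\bar\nu
\]
as a map into $C^{0,\alpha}(\T)$. The constant mode fixes $\bar\nu$, and the remaining mean-zero equation is to be solved for $h$ together with $\bar W$ and $\bar\gamma$.

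The crux of the argument is the invertibility of $D_hF$ at the reference $h\equiv 0$, $\eps=0$. Using the explicit reference stream function on $B_1$ and on its complement, $D_hF$ should diagonalize, at leading order in $\eps$, on the Fourier basis on $\T$. Its symbol combines a velocity-jump contribution linear in $|n|$ (since the interior and exterior Dirichlet-to-Neumann maps on the disk both act as $|n|$, differentiated through $|\cdot|^2$) with a surface-tension contribution quadratic in $n$ from the linearized mean curvature in \eqref{413}, of the schematic form
\[
\lambda_n(\eps)\;=\;-\frac{\bar a^2\rin+\bar b^2\rout}{4\pi^2 R}\,n\;+\;\bar\sigma_\eps\bar\eps\,(n^2-1)\;+\;o(1),\qquad n\ge 2.
\]
Conditions \eqref{404} and \eqref{406} keep $\lambda_n$ uniformly bounded away from zero as $n\to\infty$ and ensure smoothness of $F$ in $\eps$, whereas the discrete exclusion \eqref{405} removes exactly those $\eps\bar\sigma_\eps$-values at which a single $\lambda_n$, $n\ge 3$, vanishes (the suspected saddle-node bifurcations). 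Making this symbol estimate quantitative and controlling the $O(\eps)$ off-diagonal remainder on a scale of regularity compatible with the nonlinearity is the main technical obstacle.

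Once $D_hF$ is shown to be an isomorphism on the symmetry-reduced space, the implicit function theorem produces a unique $C^1$-family $(h(\eps),\bar W(\eps),\bar\gamma(\eps),\bar\nu(\eps))$ for $\eps\in(0,\eps_0)$. Formula \eqref{407} is then read off by evaluating $F=0$ against the reference stream function: the $\log(8R/\bar\eps)$ comes from the near-singularity expansion of $G_R$, the constant $-\tfrac12$ from matching the exterior log-profile on $\partial B_1$, the term $\tfrac14\tfrac{\bar a^2}{\bar b^2}\tfrac{\rin}{\rout}$ from the interior kinetic contribution to the jump \eqref{26}, and the surface-tension correction $\tfrac{\pi R}{\bar b\bar\rho_{\text{out}}}\bar\eps\bar\sigma_\eps$ from the mean-curvature piece at the reference disk; analogous leading-order asymptotics for $\bar\gamma$ and $\bar\nu$ follow from the same expansion.
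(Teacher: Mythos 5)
Your overall strategy coincides with the paper's (blow-up rescaling, boundary parametrization over the unit circle with the area/center normalizations, separate inner and outer elliptic solves, a residual built from the Young--Laplace jump, and an implicit function theorem with a Fourier-symbol invertibility analysis, with \eqref{322}/\eqref{405} excluding the resonant modes). However, there are two genuine gaps. First, your linearization analysis starts at $n\ge 2$ and silently assumes the $n=1$ component of the equation is taken care of by the center-of-mass normalization. That normalization only removes $\cos\theta$ from the \emph{domain} of the unknown $h$; in the \emph{target}, the $\cos\theta$-component of $F$ does not vanish automatically, and the linearized operator in $h$ degenerates exactly on this mode (translation invariance of the limiting 2D problem: the Dirichlet-to-Neumann and capacity-potential linearizations both annihilate $\cos\theta$, as does the linearized curvature). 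One scalar degree of freedom — in the paper the parameter $S$ hidden in the speed $W$ after the circulation is spent — must be used in a Lyapunov--Schmidt reduction to solve the $\cos\theta$-projected equation $\mathcal{F}_1=0$ separately, and one must check that this projected equation is solvable (it is a quadratic equation in $S$ whose relevant root exists only under $\eps^2\sigma_\eps\to 0$) and that the induced $S_{\theta,\eps}$ does not destroy the differentiability of the remaining equation. You mention solving "for $h$ together with $\bar W$ and $\bar\gamma$", but you never verify that the derivative with respect to the leftover scalar actually covers the missing $\cos\theta$ direction, which is the crux.

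Second, your claim that \eqref{404} and \eqref{406} "ensure smoothness of $F$ in $\eps$" is not justified and is in fact the paper's hardest technical point: since $\bar W$ and $\bar\gamma$ diverge like $|\log\eps|$ and the fundamental solution of $-\div(r^{-1}\grad\cdot)$ carries a $\log(1/\eps)$ after rescaling, a generic choice of $(\bar W,\bar\gamma)$ produces terms of order $\eps\log\eps$ in the boundary data of the outer problem, so the residual is \emph{not} $C^1$ in $\eps$ at $\eps=0$ and the implicit function theorem cannot be applied there. One must expand the pulled-back single-layer kernel to first order in $(\eps,\theta)$, check that the image of the ansatz lies in $\mathrm{span}\{1,y_1\}$ up to $O(\eps^2|\log\eps|+\|\theta\|^2|\log\eps|)$, and then \emph{choose} $W(S)$ and $\gamma(S)$ to cancel the logarithms, so that the normal-derivative jump $\mu_{\theta,\eps}(S)$ is $C^1$ up to $\eps=0$ with controlled derivatives. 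Relatedly, the limiting exterior problem (2D Laplace with constant Dirichlet data) has no $H^1$ solution theory and needs the circulation normalization as a selection principle; your proposal invokes the Green's function expansion only to read off the Kelvin--Hicks asymptotics, not to establish the differentiable dependence on $(\theta,\eps)$ that the whole IFT argument rests on. Without these two ingredients the proof scheme, as written, would fail at the invertibility and regularity hypotheses of the implicit function theorem.
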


Analyzing overdetermined boundary problems is a classical problem in mathematical physics, particularly in potential theory, fluid dynamics, and differential geometry. A well-known example is the Schiffer problem, where the eigenvalue problem for the Neumann Laplacian is studied for geometries that permit constant Dirichlet data (see page 688 of \cite{Yau82} or \cite{Shklover00}). Closely related is the overdetermined Dirichlet eigenvalue problem proposed by Sicbaldi \cite{Sicbaldi10}, or in a more general nonlinear form by Berestycki, Caffarelli, and Nirenberg \cite{BerestyckiCaffarelliNirenberg97}.
See also the work by Dom\'inguez-V\'azquez,  Enciso,  and Peralta-Salas \cite{DominguezVazquezEncisoPeraltaSalas23}.

In the context of (one-phase) vortex rings, overdetermined boundary problems similar to ours, but involving a regular stream function, have been studied since Fraenkel’s work \cite{fraenkel1970steady}. Alt and Caffarelli also explored free fluid problems using a variational method \cite{AltCaffarelli81}. While many related contributions exist, we cannot cite them all here; however, a good, though still non-comprehensive, overview is provided in \cite{DominguezVazquezEncisoPeraltaSalas23}. Our problem differs from those mentioned above in that it involves a jump condition for the normal derivative, rather than a (nonlinear) Neumann condition. To the best of our knowledge, such conditions have not been investigated in other settings.

We solve the overdetermined boundary problem in a perturbative setting. The key observation is that, in the limit of vanishing cross-sectional radii, $ \eps \to 0$, the vortex ring locally resembles a cylinder, and the cross-section $\Omega$ approximates the disk $B_{\bar \eps}(R,0)$. After an appropriate change of variables (i.e., using \emph{blow-up} variables), the limiting problem becomes thus a circular vortex patch in the plane, where the jump condition \eqref{26} is trivially satisfied.

In the next subsection, we will perform this change of variables and then formulate our precise result.

\subsection{The perturbative setting}

For a ring of small cross-sectional radius, we make the ansatz
\[
\Omega = \begin{pmatrix}
R\\0
\end{pmatrix} + \bar\eps\Omega_{\theta},
\]
where $ \Omega_{\theta} $ is the domain enclosed by the image $\chi_{\theta}(\partial B_1(0))$ of the unit circle under the map \begin{align}
\chi_{\theta}(y) := y(1+\theta(y)).
\end{align}
By construction, $\theta$ describes  the radial signed distance from the boundary $\partial \Omega_{\theta}$ to the unit circle $\partial B_1(0)$. It will be regular and small so that the cross-section is nearly spherical. See Figure \ref{fig} for an illustration. \begin{figure}
  \centering
  \includegraphics[scale=0.2]{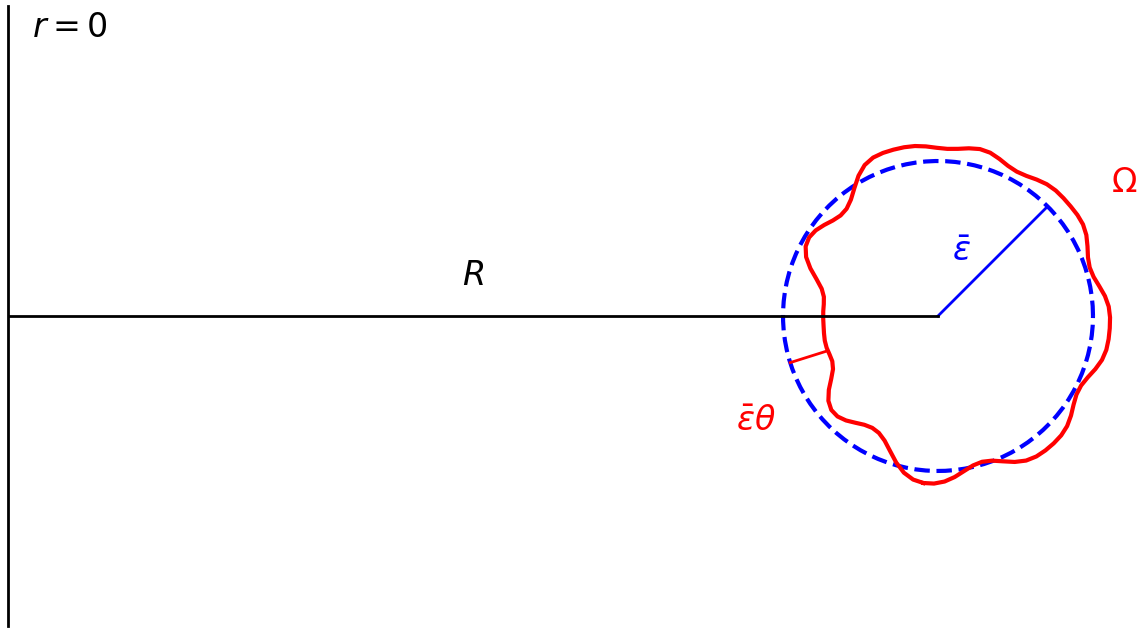}
  \caption{$\Omega$ in the original variables}\label{fig}
\end{figure}
 The non-dimensional parameter $\eps=\frac{\bar\eps}{R}$ measures the ratio of the interior ring radius $R$ to the cross-section radius. In order to simplify the notation slightly, we will often write $B$ for $B_1(0)$. Moreover, whenever it is convenient, we will parametrize the circle $\partial B$ by the torus  $\T=\R/2\pi\Z$.

We will work with the geometric conditions
\begin{equation}\label{62}
\theta(y_1,y_2) = \theta(y_1,-y_2),\quad |\Omega_{\theta}| =  \pi,\quad \int_{ \Omega_{\theta}} y_1\, \dy = 0,\quad \norm{\theta}_{L^\infty}\leq\frac{1}{2}.
\end{equation} 
The first prescribes symmetry across the radial axis (which is natural as the problem is symmetric in $z$), the second fixes the volume of the perturbed ball, and the third condition sets the center of the domain to the origin (with respect to the new variables). The second and third conditions are justified since $\bar\eps$ and $R$ are given.
 The last condition is there to make sure $\Omega_\theta$ is well-defined.

We  will choose our perturbation variables in the Banach manifold with boundary
\begin{equation}\label{def M}
\M  :=\left\{ (\eps,\theta)\in [0,\eps_0]\times H^k(\partial B):\:  \eqref{62}\mbox{ holds and }\|\theta\|_{H^k} \le \eps^{\ell}\right\}, 
\end{equation}
where $k\in \N$ is an arbitrary number with $k\geq 5$ and $\ell \in(\frac{1}{2},1)$ is arbitrary. The restriction $k\geq 5$ is most likely not optimal, but difficult to improve and mostly comes from Lemma \ref{L4}. In particular, it implies that $\de \Omega_\theta$ is (at least) $C^4$ because of the Sobolev embedding $H^5(\partial B)\hookrightarrow C^4(\partial B)$.
The purpose of the last condition in \eqref{def M} is to estimate broken terms such as e.g.\ $ \theta^2\eps^{-1}$ near $0$.

We will also consider the sets \begin{align}
V^k:=\{\theta\in H^k(\partial B):\: \exists \eps \text{ such that } (\eps,\theta)\in \mathcal{M}\}
\end{align}

\noindent of all $\theta$ such that \eqref{62} holds, which is a Banach manifold too (and in fact smooth around $0$, as we will see in Subsection \ref{tspace M}).
For notational convenience, we will furthermore write \begin{align}
H_{\sym}^k(\de B):=\{\theta\in H^k(\de B):\:\theta(y_1,y_2)=\theta(y_1,-y_2)\}.
\end{align}

 It is convenient to split the elliptic problem \eqref{30}, \eqref{31}, \eqref{32b}, \eqref{27}, \eqref{29} into an inhomogeneous interior contribution and the remaining homogeneous problem, which are independent of each other. More specifically, slightly abusing our notation convention, we consider $\psi_{\mathrm{int}} = \psi-\frac{\bar W}2r^2-\bar \gamma  $, which solves  the elliptic Dirichlet problem inside of the domain,
\begin{equation}
\label{39}
-\div\left(\frac1{r}\grad \psiin \right) =\bar{\xi} r \quad\mbox{in }\Omega,\quad 
\psiin =0\quad \mbox{on }\partial\Omega.
\end{equation}   
The exterior part $\psiout=\psi-\psiin\1_{\Omega}$, defined on all of $\HH$, then fulfills
\begin{equation}
\label{40}
-\div\left(\frac1{r}\grad \psiout  \right) =0 \quad\mbox{in } \HH\backslash \de\Omega,\quad 
\psiout    =\frac{\bar W}2r^2+\bar\gamma\quad \mbox{on }\partial\Omega,
\end{equation}
together with the boundary conditions
\begin{equation}
    \label{40a}
    \psiout=0\quad\mbox{on }\partial \HH,\quad \frac1r|\grad\psiout|\to 0\quad \mbox{as }|(r,z)|\to \infty, \quad\lim_{r\rightarrow 0} \frac{1}{r}|\nabla\psiout|<\infty.
\end{equation}
In these new variables, the circulation condition \eqref{402} reads
\begin{align}
\bar{b}_{\mathrm{ext}}= - \int_{\de\Omega}\frac{1}{r}\de_n\psiout\dx.\label{circ cond}
\end{align}

It is readily checked that    $\psiout=\frac{\bar{W}}{2}r^2+\bar \gamma$ also inside of $\Omega$.
Using this decomposition, the jump condition \eqref{26} simply reads as  \begin{align}
\rin\left(\frac{1}{r}\de_n\psiin\right)^2-\rout\left[\frac{1}{r}\de_n\psiout\right]^2+2\bar\sigma H=\bar \nu,
\end{align}

\noindent where the first normal derivative is the \emph{inner} normal derivative.

We note that $\psiin$ is independent of $\bar{b}_{\mathrm{ext}}$ and 1-homogenous in $\bar \xi$, while $\psiout$ is independent of $\bar \xi$ and 1-homogenous in the triple $(\bar{b}_{\mathrm{ext}}, \bar W, \bar \gamma)$.

Because we aim at constructing vortex rings of small cross-sectional radius, $\eps\ll1$, it is customary to rescale the elliptic problem \eqref{39}, \eqref{40}, \eqref{40a}, \eqref{circ cond} using blow-up variables. For this purpose, we consider the mapping $q$ defined by \begin{align}
q(r,z)  = \frac1{\bar\eps}(r -R,z) = (x_1,x_2)=x,
\end{align}
 which maps $\Omega$ onto some $\Omega_{\theta}$. We observe that $r = R(1+\eps x_1)>0$ enforces that $x_1 >-1/\eps$, and thus \begin{align}
 \HH_{1/\eps} =  q(\HH) = \{x\in \R^2:\: x_1 +1/\eps>0\}.
 \end{align} 
Moreover, we will consider \begin{align}
\bin = \pi R^2\bar \eps^2 \bar \xi,
\end{align}
as motivated by  \eqref{411} and the scaling of the circulation, which implies that $\bar{b}_{\mathrm{int}} R\approx \bin$ up to lower order terms. 
Finally, we take the opportunity to non-dimensionalize the problem. Setting
\begin{gather*}    
\bout=R\bar{b}_{\mathrm{ext}},\quad\beta = \frac{2 R^3\eps \bar \sigma}{\rout \bout^2} ,\quad 
W=\frac{R^2}{\bout} \bar W,\quad \gamma =\frac1\bout \bar \gamma ,\quad \nu = \frac{\eps^2 }{\rout \bout^2}\bar \nu,\label{substitutions2}
\end{gather*}
and writing $\vin  = 4\pi \bin^{-1}\psiin \circ q^{-1}$ and $\vout=\bout^{-1}\psiout\circ q^{-1}$, we are now considering the  overdetermined free boundary problem
   \begin{align}
-\div\left(\frac1{1+\eps x_1}\grad \vin \right) &=4 (1+\eps x_1) \quad\mbox{in }\Omega_{\theta},\label{32}\\
\vin&=0\quad\mbox{on }\de\Omega_\theta,\label{32'}\\
\vin&=0\quad\mbox{in }\HH_{1/{\eps}}\backslash\Omega_{\theta},\label{32''}\\
-\div\left(\frac1{1+\eps x_1}\grad \vout\right) &=0 \quad\mbox{in } \HH_{1/\eps}\setminus \de \Omega_{\theta},\label{33}\\
\vout  - \frac{W}2(1+\eps x_1)^2 & =\gamma \quad\mbox{on }\partial \Omega_{\theta},\label{34}\\
\vout& =0 \quad \mbox{on }\partial \HH_{1/{\eps}},\label{35}\\
\frac1{1+\eps x_1}|\grad\vout|&\to 0\quad\mbox{as }|x|\to \infty,\label{36}\\
\lim_{x_1\rightarrow-\frac{1}{\eps}}\frac1{1+\eps x_1}|\grad\vout|&<\infty,\label{36'}\\
\hspace{-2em} \rho \left( \frac{1}{1+\eps x_1}\partial_n \vin\right)^2 - \left[ \left( \frac{1}{1+\eps x_1}\partial_n \vout\right)\right]^2& + \beta \left(\kappa_{\theta} +\eps \frac{n\cdot e_1}{1+\eps x_1}\right)=\nu \quad   \mbox{on }\partial \Omega_{\theta},\:\:\label{38}
\end{align}
where $\kappa_{\theta}$ is the curvature of the one-dimensional boundary $\partial \Omega_{\theta}$ (see \eqref{413} for the derivation of that term). The factor of $4$ in \eqref{32} is simply introduced for notational convenience in the later part of the paper.   Furthermore, we have that
\begin{equation}
\label{52}
\int_{\partial \Omega_{\theta}} \frac1{1+\eps x_1}  \partial_n \vout\dd \Ha^{1} = -1,
\end{equation}
cf.~\eqref{circ cond}. Our rescaling of the interior stream function is valid only if the interior vorticity distribution is non-zero, $\bin\not=0$. If, however, $\bin=0$, we may simply choose $\vin=\psiin=0$.

The resulting problem thus depends only on two non-dimensionalized physical control quantities: $\rho$, which is the circulation-based inertia ratio, and $\beta$, which is (twice) the inverse of the  circulation based Weber number \eqref{501}. The speed of the ring $W$, the flux constant $\gamma$, and the Bernoulli constant $\nu$ will all depend on these quantities and on the ring geometry. Furthermore, the assumptions on the surface tension, $\beta$ is a $C^1$ function of $\eps$ and the conditions \eqref{404}, \eqref{405}, and \eqref{406} become
\begin{gather}
 \lim_{\eps\to 0} \frac1{\beta_{\eps} }\in [0,\infty)\setminus \left(8\rho + \frac1{2\pi^2}\right)^{-1}\N_{\ge 3},\label{322}\\
  \lim_{\eps\to0} {\eps}{\beta_{\eps}} = 0,\label{323}\\
 \eps |\partial_{\eps}\beta_{\eps}|\lesssim \beta_{\eps}.\label{324}
\end{gather}

%We remark that if none of the parameters is $0$, this system can be further non-dimensionalized and up to scaling, the solution only depends on the numbers $\eps$, $\rho$ and $\eps\sigma$.

%We want to allow for the case $a^2\rho^-=0$ (which corresponds to the hollow vortex ring) here though, so we will not make this reduction.

Our main result  can now be stated in the following precise form:

\begin{theorem}
\label{T2}
Let $\rho\in\R_{\ge0}$ denote the inertia ratio. Suppose that the inverse Weber number $\beta\in\R_{\ge0}$ is a $C^1$ function of the aspect ratio $\eps\in\R_{>0}$, that is either identically zero, $\beta=0$, or it is strictly positive, $ \beta_{\eps}>0$, with \eqref{322}, \eqref{323}, and \eqref{324}. Then there exists an $\eps_0\in\R_{>0}$ such that for any $\eps\in(0,\eps_0)$, there exists a solution $(\theta,\vin,\vout,W,\gamma,\nu)$ to \eqref{32}, \eqref{32'}, \eqref{32''}, \eqref{33}, \eqref{34}, \eqref{35}, \eqref{36}, \eqref{36'}, \eqref{38}, and \eqref{52}. The function $\theta$ is smooth and  $\vin$ and $\vout$  are both smooth up to the boundary $\de\Omega_\theta$. The shape function $\theta$ satisfies the geometric conditions in  \eqref{62}. 

Furthermore, the following holds:
\begin{enumerate}
\item The speed, flux and Bernoulli constants satisfy the following asymptotic expansions
\begin{align}\label{325}
  W&=\frac{1}{4\pi}\left(\log 8-\frac{1}{2}+\log\frac{1}{\eps}\right)+\rho\pi+\frac{\beta_{\eps}\pi}{2} +o(1),\\
\label{326}
 \gamma&=\frac{3}{8\pi}\log\left(\frac{8}{\eps}\right)-\frac{15}{16\pi}-\frac{\rho\pi}2-\frac{ \beta_{\eps}\pi}{4}+o(1),\\
 \nu & =    4\rho - \frac1{4\pi^2} +\beta_{\eps}+o(1),
\end{align}
as $\eps\to0$.  

\item The shape function $\theta$ goes to $0$ in every $H^k(\partial B)$-space as $\eps\rightarrow 0$. In fact, $\theta$ is a continuously Fr\'echet differentiable function of $\eps$ and it holds that $\de_\eps\theta\rightarrow 0$ at $\eps=0$ in every $H^k(\partial B)$ space.

\item There exists a constant $C_*$ such that for  small enough $\eps$, the solution  is unique  among all $(\theta,\vin,\vout,W,\gamma,\nu)$ such that $\theta$ is in the class \eqref{62} with $\norm{\theta}_{H^5}\leq\eps^{\ell}$ for some $\ell\in (1/2,1)$, and $W$ satisfying the bound 
\begin{equation}
    \label{321}
    |W||\log \eps|\left(\eps^2 +\|\theta\|_{H^k}^2\right)\le C_*.
\end{equation}
\end{enumerate}
\end{theorem}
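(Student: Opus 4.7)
The plan is to recast the overdetermined system as a single functional equation $F(\eps, \theta, \tilde W, \tilde \gamma) = 0$ on a suitable Banach manifold and apply a quantitative implicit function theorem around $\eps = 0$, $\theta = 0$. First I would eliminate the stream functions: for each admissible tuple $(\eps, \theta, W, \gamma)$ the elliptic problems \eqref{32}--\eqref{36'} uniquely determine $\vin$ and $\vout$ (with $\vin = 0$ if $a = 0$), the outer problem being representable via a Green-function or single-layer potential on $\partial\Omega_\theta$ in $\HH_{1/\eps}$. The Bernoulli constant $\nu$ is then absorbed as the boundary average of the left-hand side of \eqref{38}, leaving two nontrivial conditions: the mean-zero part of \eqref{38} in $H^{k-2}_{\sym}(\partial B)/\R$ and the scalar circulation normalization \eqref{52}. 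The geometric constraints in \eqref{62} confine $\theta$ to a codimension-two submanifold of $H^k_{\sym}$ on which $F$ is evaluated.

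Because $W$ and $\gamma$ carry logarithmic leading terms as $\eps \to 0$, a naive IFT at $\eps = 0$ fails. I would therefore split $W = W_0(\eps) + \tilde W$ and $\gamma = \gamma_0(\eps) + \tilde \gamma$, with the singular parts $W_0, \gamma_0$ absorbing the $\log(1/\eps)$ and $\eps\sigma_\eps$ contributions from \eqref{325}--\eqref{326}. These candidates are read off from the explicit base case $\theta = 0$: $\vin^0(x) = 1 - |x|^2$ on $B_1(0)$, and $\vout^0$ is a logarithmic potential on $\HH_{1/\eps} \setminus \overline{B_1(0)}$ matched to $\tfrac{W}{2}(1+\eps x_1)^2 + \gamma$ on $\partial B_1(0)$. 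Expanding $\vout^0$ in $\eps$, substituting into \eqref{38} and \eqref{52}, and solving for $W_0, \gamma_0, \nu_0$ at leading order reproduces the Kelvin--Hicks-type formulas in \eqref{325}--\eqref{326}.

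The central technical step is to show that the Fr\'echet derivative $D_{(\theta,\tilde W,\tilde\gamma)} F$ at $(\eps,\theta,\tilde W,\tilde\gamma) = 0$ is an isomorphism, uniformly in small $\eps$. On $\partial B$, the symmetry in \eqref{62} restricts $\theta$ to cosine modes, and the two remaining constraints of \eqref{62} eliminate the Fourier modes $n = 0$ and $n = 1$, leaving $\theta = \sum_{n\geq 2} \hat\theta_n \cos(n\alpha)$. The linearization of \eqref{38} diagonalizes in this basis, combining the Dirichlet-to-Neumann maps for $\vin$ and $\vout$ (contributing the ``kinetic'' coefficient $8\rho + \tfrac{1}{2\pi^2}$) with the curvature $\kappa_\theta$ (contributing a factor of $n$), into a symbol of schematic form
\begin{equation}
c_n \;\propto\; \eps\sigma_\eps \, n - \Bigl(8\rho + \tfrac{1}{2\pi^2}\Bigr), \qquad n \geq 3,
\end{equation}
while the mode $n=2$ is handled separately and is automatically nonzero. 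The exclusion set in \eqref{322} is exactly where $c_n$ vanishes for some $n \geq 3$; off it, $|c_n|$ is bounded below uniformly in $\eps$. The two scalar unknowns $(\tilde W, \tilde\gamma)$ then resolve the zero-mode part plus the circulation constraint, a $2\times 2$ linear system whose non-degeneracy reduces to an explicit computation on the base stream functions. I expect this uniform-in-$\eps$ isomorphism, together with the nonlinear remainder bounds that require \eqref{323}--\eqref{324}, to be the principal obstacle.

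Once IFT produces a $C^1$ branch $(\theta_\eps, \tilde W_\eps, \tilde\gamma_\eps)$ with $\theta_\eps \to 0$ in every $H^k$ and $\partial_\eps \theta_\eps \to 0$ (the latter by differentiating the IFT identity in $\eps$ and using \eqref{324}), substituting back into $W_0, \gamma_0, \nu_0$ gives the asymptotic expansions \eqref{325}--\eqref{326}. Smoothness of $\theta_\eps, \vin, \vout$ follows from a standard elliptic bootstrap on \eqref{32}--\eqref{36'}. Local uniqueness in the IFT ball translates directly to uniqueness in the class $\|\theta\|_{H^5}\le\eps^\ell$; the extra hypothesis \eqref{321} is precisely what is needed to absorb the nonlinear contribution of the form $W(\eps^2+\|\theta\|_{H^k}^2)$ appearing in \eqref{38}, ruling out spurious high-speed branches that would lie beyond the IFT's range.
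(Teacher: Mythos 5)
Your overall architecture (eliminate the stream functions via a single-layer/Green representation, subtract the logarithmically divergent parts $W_0(\eps),\gamma_0(\eps)$, identify the Fourier symbol of the linearization and the exclusion set \eqref{322}, then IFT) tracks the paper's strategy, and your symbol $\eps\sigma_\eps(l+1)-\bigl(8\rho+\tfrac1{2\pi^2}\bigr)$ (up to the factor $l-1$ and an index shift; note the first excluded integer is $3$, so there is no ``automatically nonzero'' low mode) is essentially the correct one. The genuine gap is your central claim that $\mathrm{D}_{(\theta,\tilde W,\tilde\gamma)}F$ is an isomorphism \emph{uniformly in small} $\eps$. It is not: the scalars $\tilde W,\tilde\gamma$ enter the outer Dirichlet data only through $\tilde\gamma+\tfrac{\tilde W}{2}(1+\eps x_1)^2$, so at $\eps=0$ they act through a single constant, and the direction separating them is of size $\eps$. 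Quantitatively, a perturbation $\delta W$ moves the $\cos\alpha$-component of $\mu$ by $O(\eps\,\delta W)$, hence (after the mandatory $1/(\eps\sigma_\eps)$ normalization of the jump condition, which you need since $\eps\sigma_\eps\gtrsim1$ forces $\sigma_\eps\to\infty$) the $x_1$-mode of $F$ responds only at order $\delta W/\sigma_\eps\to0$; likewise the circulation responds to $(\delta W,\delta\gamma)$ only at order $1/|\log\eps|$ because the kernel carries the constant $\tfrac1{2\pi}\log\tfrac1\eps$ on the zero mode. So your $2\times2$ block degenerates as $\eps\to0$, and the joint IFT at $\eps=0$ with uniformly bounded inverse does not close as stated. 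This is exactly why the paper does not put $(W,\gamma)$ into the Banach-space IFT: it builds the circulation normalization into the construction of $\mu_{\theta,\eps}(S)$, lets the remaining scalar freedom enter through the rescaled parameter $S$ (with the explicit factor $\eps$ in the ansatz \eqref{74} and $W$ given by \eqref{def W}), solves the $x_1$-mode equation $\mathcal{F}_1=0$ for $S$ by hand as a scalar quadratic equation with explicit asymptotics \eqref{311}, and only then applies the IFT in $\theta$ on the orthogonal complement — a Lyapunov--Schmidt reduction rather than a joint IFT. Your route could in principle be repaired by rescaling the scalar unknowns (and the equations) $\eps$-dependently, but that repair is the reduction.

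A second, subtler point you pass over: even after subtracting $W_0(\eps)$ and $\gamma_0(\eps)$, joint $C^1$ dependence of the residual functional down to $\eps=0$ is not automatic. For generic $(W,\gamma)$ the quantity $\mu_{\theta,\eps}$ picks up non-differentiable $\eps\log\eps$-type terms; the paper avoids these only by matching $W,\gamma$ to the precise kernel expansion (Lemma \ref{L5}, Lemma \ref{L13}) so that $\mu=\tilde\mu_{\theta,0}+\eps S\,(n_\theta\cdot e_1)\circ\chi_\theta+\mathcal{E}$ with controlled error, and by working on the manifold $\mathcal{M}$ with the constraint $\|\theta\|_{H^k}\le\eps^{\ell}$, which is what tames derivative bounds of the form $\|\theta\|_{H^k}^2/\eps$. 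Without these ingredients the ``$C^1$ branch from IFT'' step, and with it items (2) and (3) of the theorem (in particular the role of hypothesis \eqref{321}, which in the paper is used through Lemma \ref{bd det S} to force any competitor's $\tilde S$ into the regime where the scalar reduction equation has a unique root), is not yet justified.
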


Reversing the above rescaling, we recover Theorem \ref{Thm2}.

We will find our solution to this overdetermined boundary problem by applying the implicit function theorem to the jump condition \eqref{38}. Apparently, in the limiting case $(\theta,\eps)=(0,0)$, the cross-section domain $\Omega$ becomes the unit disk and the left-hand side of \eqref{38} reads 
\[
\lim_{\eps\to 0}\frac1{\beta_{\eps}} \left(\rho\left(\partial_n \vin)^2 -(\partial_n\vout)^2\right)\right) +1,
\]
which is indeed constant because  the limit potentials $\vin$ and $\vout$ defined now on $B$ and $\R^2$, respectively, are radially symmetric functions. We will give more insights into the strategy of the proof in the subsequent subsection. At this point, however, we already remark that in the case of positive surface tensions, the  linear operator associated with  the jump condition \eqref{38} has the Fourier symbol
\begin{align}
\lim_{\eps\to 0}\frac1{\beta_{\eps}}\left(8\rho+\frac1{2\pi^2}\right)(1-|k|) - 1+k^2,\label{Fourier sym}
\end{align}
which may actually degenerate unless assumption \eqref{322} is satisfied. The limiting problem for a hollow vortex ring was previously studied by Wegmann and Crowdy \cite{MR1794849}, who discovered bifurcations from a circular vortex sheet. These bifurcations bear some resemblance to the well-known Crapper waves, which are non-trivial, steady irrotational capillary water waves. After fixing the mass density, the bifurcations occur at precisely the same critical values that we need to exclude in \eqref{322}. It would be interesting to investigate such bifurcations in the context of our thin vortex rings as well. We plan to address this problem in future work.

\subsection{Strategy of the proof}\label{proof strat}

Our goal is to apply the implicit function theorem to the jump condition \eqref{38}. We thus have to establish that the geometric quantities and the solution given through \eqref{32}--\eqref{52} depend continuously differentiable on the pair $(\eps,\theta)$.  
With this, after pulling back  the jump  condition \eqref{38} to the unit circle, we have that
\begin{equation}
\label{41}
\rho \lambda_{\theta,\eps}^2 -  \mu_{\theta,\eps}^2 + \beta_{\eps} h_{\theta,\eps} =\const \quad\mbox{on }\partial B,
\end{equation}
where now
\begin{align*}
h_{\theta,\eps} & = \kappa_{\theta}\circ \chi_{\theta} +\eps \frac{(n_{\theta}\cdot e_1)\circ \chi_{\theta}}{1+\eps \chi_{\theta}\cdot e_1},\\
\lambda_{\theta,\eps} & = \frac1{1+\eps \chi_\theta\cdot e_1}\partial_n \vin\circ \chi_{\theta},\\
\mu_{\theta,\eps} & =\eps W (n_{\theta}\cdot e_1)\circ \chi_{\theta}- \frac1{1+\eps\chi_{\theta}\cdot e_1} \partial_{n}\vout \circ\chi_{\theta},
\end{align*}
where $\kappa_\theta$ is the mean curvature of $\Omega_\theta$ and $n_\theta$ its normal. The circulation condition \eqref{52} becomes
\begin{equation}
    \label{320}
    \int_{\partial B} m_{\theta} \mu_{\theta,\eps}\dd \Ha^1 = 1.
\end{equation}

Our goal is to find a unique shape function $\theta=\theta(\eps)$ such that the jump condition \eqref{41} is additionally satisfied. To this end, we use  the abstract notation ``$\const$'' to represent the Bernoulli constant $\nu$, indicating that the jump condition will be solved in a quotient space.
As previously stated, we will apply the implicit function theorem to solve this equation. In the base case $(\theta,\eps)=(0,0)$, the equation is trivially satisfied because, on the ball $\Omega_0=B_1(0)$, the curvature $h_{0,0}$ is constant. The solutions $\vin$ and $\vout$ to the interior and exterior problems, respectively, are radially symmetric, which ensures that their normal (and thus radial) derivatives are constant on the interface $\partial\Omega_0$. Furthermore, the coefficient $\eps W$ vanishes in this case.

Conceptually, the limiting problem for $\eps=0$ should correspond to a similar reformulation for a stationary 2D vortex.

We can not expect the implicit function theorem alone to prove the statement, as the limiting problem is clearly invariant under translation, and hence the derivative of \eqref{41} will degenerate in at least one direction.
To deal with this degeneracy we will introduce an additional parameter $S$, which, roughly speaking, corresponds to the additional degree of freedom that the speed $W$ gives us and which is not present in the limiting problem of a 2D vortex. This extra parameter will then allow us to perform a Lyapunov--Schmidt reduction. We will see that in terms of Fourier series the subspace that is missing in the image of the derivative is the span of $\cos\alpha=x_1$ (up to lower order terms).

 To solve the equation for positive $\eps$, we will show that all quantities $h_{\theta,\eps}$, $\lambda_{\theta,\eps}$ and $\mu_{\theta,\eps}$ are in a suitable sense continuously differentiable in $(\theta,\eps)$. This is comparably simple for the curvature $h_{\theta,\eps}$ and the interior potential $\lambda_{\theta,\eps}$. We treat these terms in Sections \ref{S:shape} and \ref{S:inner} below.
For the exterior problem, the analysis is more difficult for two reasons. The first one is that the (formal) limiting problem \begin{align}
\Delta \phi&=0\quad \text{in $\R^2\backslash \Omega_\theta$},\label{2D 1}\\
\phi&=\const \quad \text{ on $\de \Omega_\theta$},\label{2D 2}
\end{align}
does not have a good solution theory in $H^1$ anymore. Indeed solutions to this are of the form $p\log|x|+q\notin H^1$ if $\Omega_\theta=B$ and only unique under additional assumptions (such as fixing the prefactor of the logarithm), while for $\eps>0$, the solution for $\psiout$ does not require a selection principle.
Instead of directly working with the PDE for the limiting problem \eqref{2D 1}, \eqref{2D 2}, we will use a more indirect approach and reformulate this problem as \begin{align}
\int_{\de\Omega_\theta} \frac{1}{2\pi}\log|x-y| \de_n\phi(x)\dx&=\const \text{ on $\de\Omega_\theta$},\label{const cond}\\
 \int_{\de\Omega_\theta}  \de_n\phi(x)\dx&=-1,\label{circ cond 3}
\end{align}

\noindent which does indeed yield a unique solution $\phi=\frac{1}{2\pi}\log|x-y|*(\de_n\phi\mathcal{H}^1\mres\de\Omega_\theta)$ by classical potential theory, see e.g.\ \cite{armitage2012classical} for further reading.
This formulation is also available for $\eps>0$. If one lets $K(x,y)$ denote the fundamental solution of $-\div(\frac{1}{r}\nabla\cdot)$, then we can write \begin{align}
\int_{\Omega} K(x,y)\left[\frac{1}{r}\de_n\psiout\right]\dx=\frac{1}{2}\bar Wr^2+\bar\gamma.\label{sing layer}
\end{align}

\noindent See Section \ref{reform} for a detailed derivation.
There is also a natural analogue for \eqref{circ cond 3}, which is the circulation condition \eqref{circ cond}, that is, 
\begin{align}
\int_{\de\Omega} \frac{1}{r}\de_n \psiout\dx=-1.\label{circ cond 2}
\end{align}

\noindent The basic strategy is then to study the convergence of \eqref{const cond}, \eqref{circ cond 3} to \eqref{sing layer}, \eqref{circ cond 2} (pulled back to $\de B$).

The constants $W$ and $\gamma$ are still undetermined at this point. One of their degrees of freedom should be spent by fixing the circulation, the other by choosing the reduction parameter $S$.

The second difficulty is that $W$ and $\gamma$ apparently blow up logarithmically as $\eps\rightarrow 0$, as their asymptotics in  Theorem \ref{T2} show. As the main term in $K$ is a logarithm $\log|x-y|$ (see \eqref{43} and \eqref{SeriesK}), it will also naturally contain $|\log\eps|$-terms after pulling back too. Therefore one needs to be very careful to not pick up any terms with an asymptotic $\eps|\log\eps|$ or similar, which would not be continuously differentiable. In particular, for a general $W$ and $\gamma$, the function $\mu_{\theta,\eps}$ will in general not be differentiable in $\eps$.
The approach for extracting a continuously differentiable $\mu_{\theta,\eps}$, with a well-behaved dependence on the reduction parameter $S$, is to compute the asymptotic of the pullback of $K$, and then adjust $W$ and $\gamma$ to match the desired asymptotic of $\mu_{\theta,\eps}$.
The desired asymptotic of $\mu$ is \begin{align}
\mu=\tilde{\mu}_{\theta,0}+\eps S x_1+\text{ second order terms},\label{des asym}
\end{align}
where $\tilde{\mu}_{\theta,0}$ is the solution of the limiting problem given through $-\de_n \phi\circ \chi_\theta$ in \eqref{const cond}, \eqref{circ cond 3}.
The crucial step is that both $\gamma +\frac{1}{2}W(1+\eps(\chi_\theta)_1)^2$ and the image of this ansatz under $K$ lie in the span of $1$ and $y_1$ up to terms of order $\eps^2|\log\eps|$. This then allows to construct $\mu,W,\gamma$ so that \eqref{des asym} holds with higher order terms of vanishing derivative at $0$.

In Section \ref{S:IFT}, we finally solve the identity \eqref{41} for small values of $\eps$.
We will see that the derivative in $\theta$ at $0$ is the Fourier multiplier \begin{align}
\left(8\rho+\frac{1}{2\pi^2}\right)(1-|l|) +\beta_\eps(1-l^2) ,
\end{align}

\noindent which is invertible for $l\neq 1$ (the mode $l=1$ corresponds to translations and is handled by the reduction) under the  condition \eqref{322}.

\section{Preliminaries and notation}

\subsection{Notation}

Our paper is rather technical. To facilitate reading, we provide an overview of the notation:
\begin{itemize}
    \item We typically write $x$ for the space variable on $\Omega_{\theta}$ or $\partial \Omega_\theta$ and $y$ for the variable on $\de B$ or $B$.
    \item $\theta(\alpha)$ denotes the (signed) distance between $\Omega_\theta$ and $\de B$ at angle $\alpha$.
    \item $\Theta$ denotes a (fixed) extension of $\theta$ to $B_1(0)$.
    \item $\chi_\theta(y) = (1+\theta(y))y$ is the canonical map from $\de B_1(0)$ to  $\de \Omega_{\theta}$. Occasionally we will write only $\chi$.
    \item $\Chi_\theta(y) = (1+\Theta(y))y$ is an extension of $\chi_\theta$ which maps $B_1(0)$ to $\Omega_\theta$.
    \item The Jacobian of the map $\chi_\theta$ between the boundaries equals \begin{align}\label{20}
m_\theta:=\sqrt{(\de_\alpha\theta)^2+(\theta+1)^2}.
\end{align}
\item Let $X(\alpha) = (\cos \alpha , \sin \alpha)^T$ be the radial vector in polar coordinates as a function of the angle $\alpha\in \T\simeq \de B$.
Then
the pullback of the tangent $\tau_\theta$ of $\de\Omega_\theta$ is given by 
\[
\tau_{\theta}\circ \eta_\theta= \frac1{m_{\theta}} \left( (1+\theta) X^{\perp} +\partial_{\alpha}\theta X\right),
\]
and the pullback of the tangent $n_\theta$ of $\de\Omega_\theta$ is given by\begin{equation}\label{ExpNormal}
n_{\theta}\circ \chi_\theta=\frac1m_{\theta} \left((1+\theta) X - \partial_\alpha \theta X^{\perp}\right).
\end{equation}
We will occasionally just write $n$ instead of $n_\theta$.
\end{itemize}

\subsection{Some Preliminaries on fractional spaces and Fr\'echet differentiability in them} 
We would like to collect some general facts about $H^s$ spaces and Fr\'echet differentiability in them that we will frequently use. 

On the boundary $\de B$, we can write $L^2$-functions as Fourier series, since $\de B\simeq \T$. We then set \begin{align}
\norm{f}_{H^s}:= \norm{f}_{H^s(\de B)}:=\left(\sum_{m\in \Z} (1+|m|)^{2s}|\widehat{f}(m)|^2\right)^\frac{1}{2}.
\end{align}

\noindent This is the same as the fractional Sobolev space $W^{2,s}$ defined via the Gagliardo seminorm. For $s\notin \Z+\frac{1}{2}$ and $s> \frac{1}{2}$ there is a Sobolev embedding $H^s(\de B)\hookrightarrow C^{s-\frac{1}{2}}(\de B)$. 

There is a continuous linear trace extension operator from $H^s(\de B)$ to $H^{s+\frac{1}{2}}(B)$ for $s>0$, that is, for every $\theta \in H^s(\de B)$ there is a $\Theta\in H^{s+\frac{1}2}(B)$ such that \begin{align}
\norm{\Theta}_{H^{s+\frac{1}{2}}(B)}\lesssim \norm{\theta}_{H^s(\de B)}\quad\text{and}\quad \Theta|_{\de B}=\theta.
\end{align}
Here the norm on $H^{l}(B)$ is defined as usual via \begin{align}
\norm{f}_{H^l(B)}:=\norm{f}_{L^2(B)}+\norm{\frac{\nabla^{\lfloor l\rfloor}f(\tilde{y})-\nabla^{\lfloor l\rfloor}f(y)}{|y-\tilde{y}|^{1+l-\lfloor l\rfloor}}}_{L^2(B\times B)}.
\end{align}
There is a Sobolev embedding $H^s(B)\hookrightarrow C^{s-1}(B)$ for $s\notin \Z$. There is a bounded linear trace $H^s(B)\rightarrow H^{s-\frac{1}{2}}(\de B)$ for $s>\frac{1}{2}$.
We refer the reader e.g.\ to the monograph \cite{Hitchhiker} for further reading.

One important property is that if $u\in H^s(\Omega', \R^l)$ for some bounded $\Omega'\subset \R^d$ with sufficiently smooth boundary and $s>\frac{d}{2}$, then for every $F\in C^{\lceil s\rceil}$, the composition $F(u)$ is in $H^s$ as well and composition with $F$ is continuous in $u$, for a proof see e.g.\ \cite{BrezisMironescu}. 
In particular, this implies that these spaces are closed under products.

One important application is that: \begin{lemma}\label{FreLemma}\begin{itemize}
\item[a)] Let $U$ and $X\hookrightarrow Y$ be Banach spaces and let $V\subset U$ be open.
Suppose $G:V\rightarrow X$ is twice Gateux-differentiable as a map to $Y$. If its second derivative is (locally) bounded as a map from $U^2$ to $X$, then $G$ is (locally) continuously Fr\'echet differentiable.
\item[b)] Let $\Omega'\subset \R^d$ be a smooth and bounded domain. Let $j_1,j_2,j_3,l\in\N_0$ be given with $j_1=j_2(1+d+\dots+d^l)$ and let $F:\R^{j_3+j_1}\rightarrow \R^m$ be a smooth vector field.  If  $s>l+\frac{d}{2}$, then the map \begin{align}
(t,u)\rightarrow F(t,u, \nabla u, \dots ,\nabla^l u)\label{F map}
\end{align}
 is Fr\'echet smooth from $\R^{j_3}\times H^s(\Omega';\R^{j_2})$ to $H^{s-l}(\Omega';\R^m)$ and the Fr\'echet derivatives agree with the pointwise derivatives.
\end{itemize}
\end{lemma}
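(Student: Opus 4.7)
For part (a), the plan is to upgrade the Gateaux information to Fr\'echet data via an integral remainder formula. I would fix $u\in V$, $h\in U$ with the segment $u+[0,1]h\subset V$, set $\phi(t):=G(u+th)\in Y$, and use the chain rule for Gateaux derivatives to obtain $\phi'(t)=DG(u+th)h$ and $\phi''(t)=D^2G(u+th)(h,h)$. Taylor's theorem with integral remainder in $Y$ then gives
\[
G(u+h)-G(u)-DG(u)h \;=\; \int_0^1 (1-s)\, D^2G(u+sh)(h,h)\,ds.
\]
By hypothesis, the integrand lies in $X$ and its $X$-norm is bounded by $M\|h\|_U^2$ on a neighborhood, so the right-hand side lies in $X$ with $X$-norm $O(\|h\|_U^2)$. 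Since $G(u+h),G(u)\in X$, the identity forces $DG(u)h\in X$ for every $h$, and linearity extends this to all of $U$. Boundedness of $DG(u)\colon U\to X$ then follows from the closed graph theorem, comparing the two putative limits in $X$ via the already-known continuity of $DG(u)$ into $Y$. The remainder estimate is now precisely the Fr\'echet differentiability estimate in $X$, and continuity of $u\mapsto DG(u)$ in $L(U,X)$ follows from the analogous identity
\[
DG(u+k)h-DG(u)h \;=\; \int_0^1 D^2G(u+sk)(k,h)\,ds,
\]
whose $X$-norm is bounded by $M\|k\|_U\|h\|_U$.

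For part (b), the plan is to verify that the Gateaux derivatives of all orders of the Nemytskii-type map $G(t,u):=F(t,u,\nabla u,\ldots,\nabla^l u)$ exist and are locally bounded into $H^{s-l}(\Omega';\R^m)$, then iterate part (a). Since $s>l+d/2$, Sobolev embedding gives $H^{s-k}(\Omega')\hookrightarrow H^{s-l}(\Omega')\hookrightarrow C^0(\overline{\Omega'})$ for $0\le k\le l$, and $H^{s-l}(\Omega')$ is an algebra. Hence every component of $(u,\nabla u,\ldots,\nabla^l u)$ lies in $H^{s-l}$ and depends continuously on $u\in H^s$, and the cited composition result from \cite{BrezisMironescu} places $G(t,u)\in H^{s-l}$ and makes it continuous in $(t,u)$.

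Computing the formal Gateaux derivative in direction $(\tau,h)\in\R^{j_3}\times H^s$ yields
\[
DG(t,u)(\tau,h) \;=\; \partial_t F(\cdot)\,\tau \;+\; \sum_{k=0}^l \partial_{u_k}F(\cdot)\cdot \nabla^k h,
\]
with $(\cdot)=(t,u,\nabla u,\ldots,\nabla^l u)$. Each $\partial_t F(\cdot)$ and $\partial_{u_k}F(\cdot)$ is again in $H^{s-l}$ by composition, each $\nabla^k h$ for $k\le l$ is in $H^{s-k}\hookrightarrow H^{s-l}$, and products remain in $H^{s-l}$ by the algebra property. The same structural analysis applies to the second Gateaux derivative, which is a finite sum of products of smooth compositions in the partial derivatives of $F$ with terms of the form $\nabla^{k_1}h_1,\nabla^{k_2}h_2$, and hence is locally bounded into $H^{s-l}$. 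Part (a) therefore upgrades $G$ to a $C^1$ Fr\'echet-differentiable map. Since higher Gateaux derivatives have exactly the same structural form (smooth compositions multiplied by finite products of spatial derivatives of test directions), iterating part (a) on successive derivatives yields $C^\infty$ Fr\'echet smoothness, and the Fr\'echet derivatives coincide with the pointwise ones by construction.

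The main obstacle will be the clean extraction, in part (a), of the $X$-valuedness and boundedness of the first Fr\'echet derivative from information only about the second derivative into $X$; this is really a closed-graph argument wrapped around the integral remainder formula. Part (b) is then largely bookkeeping, the crucial input being the algebra and composition properties for $H^{s-l}$, which is precisely why the hypothesis $s>l+d/2$ is needed.
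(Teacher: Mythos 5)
Your proposal is correct and follows essentially the same route as the paper: in (a) an integral-remainder/mean-value estimate turns the local $X$-bound on the second Gateaux derivative into the Fr\'echet estimate (your closed-graph step for boundedness of $DG(u)\colon U\to X$ makes explicit a point the paper leaves implicit), and in (b) you use the same Sobolev embedding, algebra, and composition ($F(u)\in H^{s-l}$) ingredients, observe that the second Gateaux derivative is again of this composition/product form and hence locally bounded into $H^{s-l}$, apply (a), and iterate via the structural invariance of higher derivatives, exactly as the paper does (it merely packages the second derivative as a single smooth composition $\tilde F$ with $Y=C(\bar\Omega')$ rather than expanding into products).
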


\begin{proof}
\textbf{a)} We choose $u \in V$ and pick $\delta u$ small enough so $u +\delta u$ belongs to an open ball in   $V$ centered at $u$. We want to show that the Gateux-derivative  $\dd G(u). \delta u$ is the Fr\'echet derivative.  Making use of the mean value theorem, we see that
\begin{align}
\mel \norm{G(u+\delta u)-G(u)-\dd G(u).\delta u}_{X}\\
& = \norm{\int_0^1 (\dd G(u+s\delta u)-\dd G(u)).\delta u\ds}_{X}\\
& = \norm{\int_0^1\int_0^1  \dd^2 G(u+st\delta u).(\delta u,\delta u)\dt \ds}_{X} \\
&\leq \norm{\delta u}_U^2\sup_{v\in V}\norm{\dd^2G(v)}_{U^2\rightarrow X},
\end{align}
which proves the claimed differentiability. Clearly, the derivative must be locally Lipschitz continuous if the second derivative is locally bounded. 

 \medskip

\textbf{b)} It is not restrictive to assume that $j_3=0$ since we can replace $u$ by $(u,t)\in\linebreak H^s(\Omega';\R^{j_2+j_3})$.

We first apply a) with $X=H^{s-l}(\Omega';\R^m)$ and $Y=C(\bar \Omega';\R^m)$: We observe that the map \eqref{F map} is Gateaux differentiable as a map to $C(\bar \Omega';\R^m)$ and the pointwise derivative and the Gateaux derivative agree. Indeed, thanks to the smoothness of $F$, it holds that
\begin{align}
\mel \norm{F(u+\delta u,\nabla(u+\delta u),\dots)-F(u,\dots)-\sum_{i=0}^l\de_iF(u,\dots). \nabla^i \delta u}_{C^0}\\
&\lesssim \norm{\delta u}_{C^l}\sup_{|x|\leq \norm{u}_{C^l}+\norm{\delta u}_{C^l}}\|\nabla^2 F(x)\|_{C^0}\\
&\lesssim_{F,u} \norm{\delta u}_{H^{s}},
\end{align}
where we used the Sobolev embedding $H^s\hookrightarrow C^l$ in the last step.

Furthermore, the second Gateaux derivative of $F$ is of the form \begin{align}\label{smoth func}
 \tilde{F}(u,\dots,\nabla^l u,\delta u_1,\dots, \nabla^l\delta u_1, \delta u_2,\dots, \nabla^l\delta u_2)\end{align}for some smooth $\tilde{F}$. This map is continuous as a map from $H^{s}(\Omega';\R^{3m})$ to $H^{s-l}(\Omega';\R^m)$ and in particular locally bounded and hence a) implies that the map is continuously Fr\'echet differentiable. 

To conclude smoothness, observe that every derivative of the map has the structure \eqref{smoth func} (with a different number of $\delta u$'s and a different $\tilde{F}$) and hence is continuously Fr\'echet differentiable too, which then implies smoothness.
\end{proof}

We remark that we are not going to make use of function spaces on any domains different from $B$ and $\de B$ in the sequel.

\section{Geometric lemmata}\label{S:shape}

We show some basic properties about the differentiability of various geometric properties. The two key results are the characterization of the tangent space of $\mathcal{M}$ near $0$ (Proposition \ref{tspace M}) and the proof of the differentiability of the (rescaled) curvature $h_{\theta,\eps}$ (Proposition \ref{DeriCurvature}). We start with the latter.

\subsection{Regularity of geometric quantities}

\begin{proposition}
\label{DeriCurvature}
The function $(\theta,\eps)\mapsto h_{\theta,\eps}\in H^{k-2}(\de B)$ is well-defined on a small open neighborhood of $(\theta,\eps)=(0,0)\in   H^{k}(\de B)\times \R$ and  Fr\'echet smooth in the joint variable. It holds that
\begin{align}
\scalar{\mathrm{D}_\theta\big|_{(\theta,\eps)=(0,0)} h_{\theta,\eps}}{\delta \theta}&=-\delta\theta-\de_\tau^2\delta\theta,\\
\de_\eps\big|_{(\theta,\eps)=(0,0)} h_{\theta,\eps}&=y_1.
\end{align}
\end{proposition}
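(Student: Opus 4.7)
The plan is to display $h_{\theta,\eps}$ as a rational function of $\theta$, $\partial_\alpha\theta$, $\partial_\alpha^2\theta$, and $\eps$ whose denominators stay bounded away from zero near the origin, then to invoke Lemma \ref{FreLemma}(b) to promote pointwise smoothness to Fr\'echet smoothness into $H^{k-2}(\partial B)$, and finally to compute the two linearizations by plain calculus.

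Using the standard polar-coordinate formula for the curvature of the curve $\alpha\mapsto(1+\theta(\alpha))X(\alpha)$ together with \eqref{ExpNormal}, we obtain the explicit expression
\begin{equation*}
h_{\theta,\eps} \;=\; \frac{(1+\theta)^2 + 2(\partial_\alpha\theta)^2 - (1+\theta)\partial_\alpha^2\theta}{m_\theta^{\,3}} \;+\; \frac{\eps\bigl((1+\theta)y_1 + (\partial_\alpha\theta)\,y_2\bigr)}{m_\theta\bigl(1+\eps(1+\theta)y_1\bigr)},
\end{equation*}
with $m_\theta$ as in \eqref{20}. Because $H^k(\partial B)\hookrightarrow L^\infty(\partial B)$ for $k\geq 5$, on a small $H^k$-ball around $\theta=0$ and for $|\eps|$ small we have $m_\theta\geq 1/2$ and $1+\eps(1+\theta)y_1\geq 1/2$, so $h_{\theta,\eps}$ is a composition of the pointwise arguments $(\eps,\theta,\partial_\alpha\theta,\partial_\alpha^2\theta;y_1,y_2)$ with a function that is $C^\infty$ on a fixed open neighborhood of their image. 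Applying Lemma \ref{FreLemma}(b) with $l=2$, $s=k\geq 5>2+\tfrac12$ (treating the smooth $y$-dependence as additional trivial coordinates), we conclude that $(\eps,\theta)\mapsto h_{\theta,\eps}$ is Fr\'echet smooth from a neighborhood of $(0,0)$ in $\R\times H^k(\partial B)$ into $H^{k-2}(\partial B)$, and its Fr\'echet derivatives coincide with the pointwise derivatives of the above formula.

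It remains to evaluate those pointwise derivatives at $(\theta,\eps)=(0,0)$. Since the entire surface-tension correction carries an explicit prefactor $\eps$, its linearization in $\theta$ vanishes at $\eps=0$; thus $D_\theta h\big|_{(0,0)}$ equals the linearization of $\kappa_\theta\circ\chi_\theta$ at $\theta=0$. Writing $N(\theta):=(1+\theta)^2+2(\partial_\alpha\theta)^2-(1+\theta)\partial_\alpha^2\theta$ and $D(\theta):=m_\theta^3$, one computes $N(0)=D(0)=1$, $\langle DN(0),\delta\theta\rangle = 2\delta\theta - \partial_\alpha^2\delta\theta$ and $\langle DD(0),\delta\theta\rangle=3\delta\theta$; the quotient rule then yields
\begin{equation*}
\langle D_\theta h\big|_{(0,0)},\delta\theta\rangle \;=\; (2\delta\theta - \partial_\alpha^2\delta\theta) - 3\delta\theta \;=\; -\delta\theta - \partial_\alpha^2\delta\theta \;=\; -\delta\theta - \partial_\tau^2\delta\theta,
\end{equation*}
where the last identity uses that at $\theta=0$ arclength on $\partial B$ coincides with $\alpha$. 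For the $\eps$-derivative, at $\theta=0$ the curvature term is the constant $1$ and the surface-tension term reduces to $\eps y_1/(1+\eps y_1)$, whose derivative at $\eps=0$ is $y_1$, giving $\partial_\eps h\big|_{(0,0)}=y_1$, as claimed. The only mildly delicate step in the whole argument is the first one, i.e.\ the application of Lemma \ref{FreLemma}(b) with the right regularity bookkeeping so that two $\alpha$-derivatives of $\theta$ can be absorbed by the $H^{k-2}$ target space; everything else is routine.
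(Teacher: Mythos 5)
Your proposal is correct and follows essentially the same route as the paper: write out the explicit polar-coordinate formula for $h_{\theta,\eps}$ (identical to \eqref{expr h}, since $m_\theta^3=((1+\theta)^2+(\de_\tau\theta)^2)^{3/2}$), obtain Fr\'echet smoothness into $H^{k-2}(\de B)$ via Lemma \ref{FreLemma}(b) after noting the formula is only locally smooth near $(0,0)$, and compute the two linearizations by elementary calculus. The derivative computations ($2\delta\theta-\de_\tau^2\delta\theta-3\delta\theta$ and $\de_\eps[\eps y_1/(1+\eps y_1)]|_{\eps=0}=y_1$) match the paper's claimed formulas.
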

Here and occasionally in the following, we allow for negative values of the cross-section parameter $\eps$ for mathematical convenience. 
\begin{proof}
The computation of the derivative is a straightforward exercise that can be carried out on an elementary level by making  use of the explicit formula 
\begin{align}\label{expr h}
 h_{\theta,\eps} &  = \frac{(1+\theta)^2 +2 (\de_\tau\theta)^2 -(1+\theta)\de_\tau^2\theta}{\left((1+\theta)^2 +(\de_\tau\theta)^2\right)^{\frac32}} + \eps \frac{(1+\theta)\cos \alpha +\de_\tau\theta\sin\alpha}{\left((1+\theta)^2 +(\de_\tau\theta)^2\right)^{\frac12}\left(1+\eps(1+\theta)\cos\alpha\right)}.
\end{align}
The justification of the smoothness in $H^{k-2}$ is a direct application of Lemma \ref{FreLemma}, except that the formula \eqref{expr h} is only locally smooth around $(0,0)$ in $\theta,\eps$, this is not an issue because we only care about small $\theta$ and can replace the definition with something smooth for large $\theta$.
\end{proof}

With the previous proposition, we already have all the information at hand we need to deal with the surface tension term in \eqref{41}. In the next lemma, we establish the regularity of two more geometric quantities that will be needed to analyze the pressure contributions in \eqref{41}.

\begin{lemma}\label{DeriNormal}
The functions $\theta\mapsto n_\theta\circ\chi_\theta\in H^{k-1}(\de B)$ and  $\theta\mapsto m_\theta\in H^{k-1}(\de B)$ are both Fr\'echet smooth near $\theta=0\in H^{k}(\de B)$, and it holds that
\begin{align*}
\la \mathrm{D}_{\theta}|_{\theta=0} m_{\theta},\delta \theta\ra  &= \delta \theta,\\
\la \mathrm{D}_{\theta}|_{\theta=0} (n_{\theta}\circ \chi_{\theta}),\delta \theta\ra & = -\partial_{\tau}\delta \theta\,  \tau_0,
\end{align*}
where $\tau_0=y^{\perp}$ is the tangent on $\de B$.
\end{lemma}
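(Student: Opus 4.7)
My approach will be to reduce both claims to a direct application of Lemma \ref{FreLemma}(b), using the explicit formulas \eqref{20} for $m_\theta$ and \eqref{ExpNormal} for $n_\theta\circ\chi_\theta$. Both quantities can be written as a smooth function of $(\theta,\de_\alpha\theta)$ times fixed smooth vector fields on $\de B$: specifically, $m_\theta = F_1(\theta,\de_\alpha\theta)$ with $F_1(a,b)=\sqrt{(1+a)^2+b^2}$, and $n_\theta\circ\chi_\theta = F_2(\theta,\de_\alpha\theta)\,X + F_3(\theta,\de_\alpha\theta)\,X^\perp$ with $F_2(a,b) = (1+a)\bigl((1+a)^2+b^2\bigr)^{-1/2}$ and $F_3(a,b) = -b\bigl((1+a)^2+b^2\bigr)^{-1/2}$. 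These coefficients are jointly smooth on a neighborhood of $(a,b)=(0,0)$, so Lemma \ref{FreLemma}(b) (applied on $\T\simeq \de B$ with $l=1$, $d=1$; the condition $k>3/2$ is amply satisfied since $k\geq 5$) makes each map $\theta\mapsto F_i(\theta,\de_\alpha\theta)$ Fr\'echet smooth from a neighborhood of $0\in H^k(\de B)$ into $H^{k-1}(\de B)$. Multiplying by the fixed smooth functions $X$ and $X^\perp$ preserves this $H^{k-1}$-smoothness, which gives both smoothness claims.

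To compute the derivatives at $\theta=0$, I will work pointwise. Differentiating $m_\theta^2 = (1+\theta)^2 + (\de_\alpha\theta)^2$ yields $2 m_\theta\,\la Dm_\theta,\delta\theta\ra = 2(1+\theta)\,\delta\theta + 2\de_\alpha\theta\,\de_\alpha\delta\theta$; at $\theta=0$, where $m_0=1$ and $\de_\alpha\theta=0$, this collapses to $\la Dm_0,\delta\theta\ra=\delta\theta$. For the normal, applying the product rule to
\[
n_\theta\circ\chi_\theta \;=\; m_\theta^{-1}\bigl((1+\theta)\,X - \de_\alpha\theta\,X^\perp\bigr)
\]
and substituting $\la Dm_0,\delta\theta\ra=\delta\theta$ at $\theta=0$ gives
\[
\la D(n_\theta\circ\chi_\theta)|_{\theta=0},\delta\theta\ra \;=\; -\delta\theta\,X + \delta\theta\,X - \de_\alpha\delta\theta\,X^\perp \;=\; -\de_\tau\delta\theta\,\tau_0,
\]
since on the unit circle the tangential derivative $\de_\tau$ coincides with $\de_\alpha$, and $\tau_0 = y^\perp = X^\perp$.

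The only point that requires some care is that the square root $\sqrt{(1+a)^2+b^2}$, and hence $F_1,F_2,F_3$, is only smooth away from $(a,b)=(-1,0)$. This is not a serious obstacle: for $\theta$ in a small $H^k$-ball around $0$, the Sobolev embedding $H^k\hookrightarrow C^1$ (valid because $k\geq 5$) keeps $(1+\theta)^2 + (\de_\alpha\theta)^2$ bounded away from zero, so $F_1,F_2,F_3$ need only be evaluated in a neighborhood of the origin where they are genuinely smooth; alternatively, one may replace them outside this neighborhood by smooth extensions, as all conclusions are purely local at $\theta=0$.
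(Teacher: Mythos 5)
Your proposal is correct and follows essentially the same route as the paper, which likewise proves the lemma by a direct application of Lemma \ref{FreLemma} to the explicit formulas \eqref{20} and \eqref{ExpNormal} and reads off the derivatives pointwise. Your extra care about the square root being smooth only away from $(1+\theta)^2+(\de_\alpha\theta)^2=0$ matches the paper's own remark (made in the proof of Proposition \ref{DeriCurvature}) that one only needs local smoothness near $\theta=0$.
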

\begin{proof}
This is again a direct application of Lemma \ref{FreLemma}. The formulas for the derivative follow from the definitions in \eqref{20} and \eqref{ExpNormal}, respectively.
\end{proof}

 \subsection{The tangent space of $\mathcal{M}$}

\begin{proposition}\label{tspace M}
The manifold $V^k$ is smooth in a neighborhood of $\theta=0\in H^k(\partial B)$. In particular $\mathcal{M}$ is smooth around $(\theta,\eps)=(0,0)\in H^k(\partial B)\times \R$ with a  $C^{1,\frac{1}{\ell}-1}$-boundary.
The tangent space of $V^k$ at $\theta=0$ is given through \begin{align}
T_0V^k = \{\delta\theta\in H_{\sym}^k(\partial B):\:\scalar{\delta\theta}{1}=\scalar{\delta\theta}{x_1}=0\}.
\end{align}
\end{proposition}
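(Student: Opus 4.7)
The plan is to realize $V^k$ locally near $\theta=0$ as the zero set of two smooth scalar constraints inside the closed subspace $H_{\sym}^k(\de B)$, and then invoke the implicit function theorem. Using polar coordinates, the volume and centering conditions in \eqref{62} can be rewritten as
\begin{align}
F_1(\theta) := \frac12\int_0^{2\pi}\bigl((1+\theta)^2-1\bigr)\da = 0,\quad F_2(\theta):=\frac13\int_0^{2\pi}(1+\theta)^3\cos\alpha\,\da =0.
\end{align}
On the open set $\{\|\theta\|_{L^\infty}<1/2\}\subset H_{\sym}^k(\de B)$ both maps are Fr\'echet smooth into $\R$ by Lemma \ref{FreLemma} (compose the polynomials $(1+\cdot)^2$ and $(1+\cdot)^3$ with $\theta$ and integrate). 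Differentiating at $\theta=0$ yields
\begin{align}
\mathrm{D}F_1(0)\,\delta\theta=\int_0^{2\pi}\delta\theta\,\da,\quad \mathrm{D}F_2(0)\,\delta\theta=\int_0^{2\pi}\cos\alpha\,\delta\theta\,\da,
\end{align}
and the pair of functionals is surjective onto $\R^2$ even after restriction to $H_{\sym}^k(\de B)$, as one sees by testing against the symmetric candidates $\delta\theta_1\equiv 1$ and $\delta\theta_2=\cos\alpha$.

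By the implicit function theorem in Hilbert space, the joint zero set is then locally a smooth codimension-two submanifold of $H_{\sym}^k(\de B)$, whose tangent space at $0$ equals $\ker \mathrm{D}F_1(0)\cap\ker \mathrm{D}F_2(0)$. Since $x_1|_{\de B}=\cos\alpha$, this is precisely the space claimed for $T_0V^k$. The $L^\infty$-smallness condition in \eqref{62} is automatically satisfied in a neighborhood of $\theta=0$ by the Sobolev embedding $H^k(\de B)\hookrightarrow C^0(\de B)$ and therefore plays no role in local smoothness.

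For the boundary regularity of $\M$ near $(0,0)$, the plan is to parametrize the non-trivial boundary piece $\{\|\theta\|_{H^k}=\eps^\ell\}$ as a graph $\eps=g(\theta)$ over $V^k$, where $g(\theta):=\|\theta\|_{H^k}^{1/\ell}$. The key observation is that $\theta\mapsto\|\theta\|_{H^k}^2$ is a continuous quadratic form on the Hilbert space $H^k(\de B)$ and hence smooth, so $g=(\|\cdot\|_{H^k}^2)^{1/(2\ell)}$ is smooth away from $\theta=0$. At the origin, since $1/\ell>1$, a direct computation shows that $g$ is Fr\'echet differentiable with $\mathrm{D}g(0)=0$, and that $\|\mathrm{D}g(\theta)\|_{(H^k)^*}\lesssim \|\theta\|_{H^k}^{1/\ell-1}$ for $\theta\ne 0$, which yields a H\"older modulus of continuity for $\mathrm{D}g$ of exponent $1/\ell-1\in(0,1)$ at $0$. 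Consequently the graph $\eps=g(\theta)$ is $C^{1,1/\ell-1}$ in a neighborhood of $(0,0)$, which is the stated boundary regularity.

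The only mildly delicate step is this last H\"older regularity: it is essential that the squared $H^k$-norm is a genuine polynomial on a Hilbert space (rather than merely Lipschitz), so that extracting a root with exponent $1/(2\ell)\in(\tfrac12,1)$ produces precisely the exponent $1/\ell-1$ for the gradient, and nothing weaker. All other steps reduce to routine applications of Lemma \ref{FreLemma} or to the standard implicit function theorem.
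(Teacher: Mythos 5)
Your proof is correct and follows the same basic strategy as the paper: realize the constraints in \eqref{62} as smooth scalar functionals on $H^k_{\sym}(\de B)$, check that their differentials at $\theta=0$ are integration against $1$ and $y_1$, and conclude via the implicit (resp.\ inverse) function theorem that $V^k$ is locally a smooth graph over $\Span\{1,\cos\alpha\}^{\perp}$, which also identifies $T_0V^k$. The difference is in how the functionals are handled: the paper extends $\theta$ to $\Theta$ on $B$ via the trace extension, writes $G_1(\theta)=\int_B\det \mathrm{D}\Chi_\theta$ and $G_2(\theta)=\int_B\Chi_\theta\cdot e_1\det\mathrm{D}\Chi_\theta$, and differentiates the Jacobian, whereas you exploit the star-shapedness of $\Omega_\theta$ (guaranteed by $\|\theta\|_{L^\infty}\le\frac12$) to write the area and the $y_1$-moment as explicit polynomial integrals in polar coordinates; this is more elementary and avoids the extension operator entirely, at the mild cost of relying on the radial-graph description of $\Omega_\theta$. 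You also supply an actual argument for the $C^{1,\frac1\ell-1}$ boundary regularity of $\M$, which the paper only asserts "by the fibre construction"; your computation of $\mathrm{D}g$ is right, though note that you need H\"older continuity of $\mathrm{D}g$ on a whole neighborhood of $0$, not only a modulus at $0$ — this follows from the standard fact that the gradient of $x\mapsto\|x\|^{p}$ on a Hilbert space with $p=1/\ell\in(1,2)$ is globally $(p-1)$-H\"older, so the conclusion stands.
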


In the statement of the proposition, the brackets $\la\cdot,\cdot \ra$ denote the $L^2(\partial B)$ inner product.

\begin{proof}
We first want to check that the geometric constraints in \eqref{62} can be written as the zero set of smooth functions $G_1,G_2$ of $\theta$, and to compute the derivative of these.

We first consider arbitrary $\theta\in H_{\sym}^k(\partial B)$, not necessarily in $V^k$.
We use an extension $\Theta$ of $\theta$ such that \begin{align}
\norm{\Theta}_{H^{k+\frac{1}{2}}(B)}\lesssim \norm{\theta}_{H^{k}(\de B)}\quad\text{and}\quad\Theta|_{\de B}=\theta.
\end{align} 
Then 
$\Chi_{\theta}(y) = (1+\Theta(y))y $
maps $B$ to $\Omega_\theta$. For sufficiently small $\theta$, this is a diffeomorphism, as it is a $C^1$-perturbation of the identity, since $H^{k+\frac{1}{2}}\hookrightarrow C^1$.

We can now write  the volume of the transformed domain as the function
\begin{align}
G_1(\theta)= |\Omega_\theta|=\int_{B}\det \mathrm{D}\Chi_\theta\dy.
\end{align}
This is the integral of a polynomial in $\Theta,\mathrm{D}\Theta$ and hence also smooth in $\Theta\in H^{k+\frac{1}{2}}$. As the linear map $\theta\rightarrow \Theta$ is smooth too, we see that this functional is smooth in $\theta\in H^k(\de B)$.
Using the fact that $\frac{\mathrm{d}}{\dt}\det (I+tA)\big|_{t=0}=\tr A$, we see that \begin{align}
\left.\frac{\mathrm{d}}{\dt}\right|_{t=0}G_1(t\theta)  =\int_{B} 2\Theta+\scalar{\mathrm{D}\Theta}{y}\dy=\int_B\div(x\Theta)\dy =\int_{\de B}\theta\dy, 
\end{align}
thanks to the divergence theorem. 

Similarly, we have for the $y_1$-moment
\begin{align}
G_2(\theta)=\int_{\Omega_\theta}y_1\dy=\int_B \Chi_\theta\cdot e_1 \det \mathrm{D}\Chi_\theta\dy,
\end{align}
which is smooth in $\theta$ for the same reason and it holds
\begin{align}
\left.\frac{\mathrm{d}}{\dt}\right|_{t=0} G_2(t\theta) =  \int_B \Theta(x)y_1+ y_1\div(y\Theta)\dy = \int_{\de B} \theta y_1\dy,
\end{align}
where the final step follows by an integration by parts.

Since the derivative of $(G_1,G_2)$ at $0$ is a diffeomorphism from $\spann\{1,y_1\}$ to $\R^2$, we use the inverse function theorem to see that $V^k$ is locally diffeomorphic to $\spann\{1,y_1\}^\perp\subset H_{\sym}^k(\partial B)$ and the diffeomorphism is smooth. This shows that $V^k$ is a smooth manifold locally around $\theta=0$.

Finally, due to the fibre construction, it is clear that $\M$ is a smooth manifold around $(\theta,\eps)=(0,0)\subset H^k(\partial B)\times \R$, with boundary regularity $C^{1,\frac1\ell-1}$. 
%
%The fact that this is a smooth manifold then follows from the fact that for every small $v\in \spann\{1,x_1\}$ there is a unique $\tilde{v}$ in the orthogonal complement $\spann\{1,x_1\}^\perp \subset H_{\sym}^k$ such that $G_1(v+\tilde{v})=G_2(v+\tilde{v})=0$ by the implicit function theorem (since the derivative of $G_1,G_2$ is invertible from $\spann\{1,x_1\}$ to $\R^2$), which gives a smooth diffeomorphism from $\spann\{1,x_1\}^\perp$ to $V^k$.
\end{proof}

As a trivial consequence, because  $V^k$ is a smooth manifold the geometric identities characterizing the tangent plane also hold at $\theta\neq 0$ up to higher order terms. For instance, the volume condition implies the following.

\begin{corollary}
For $\theta\in V^k$ and $\delta\theta\in T_\theta V^k$ it holds that
\begin{align}
\left|\int_{\de B} \theta\dy\right|&\lesssim \norm{\theta}_{H^k}^2\label{second order int},\\
\left|\la \mathrm{D}_\theta\int_{\de B} \theta\dy,\delta\theta\ra \right|&\lesssim\norm{\theta}_{H^k}\norm{\delta\theta}_{H^k}.\label{bd int}
\end{align}
\end{corollary}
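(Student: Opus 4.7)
The plan is to leverage the smoothness of the volume functional $G_1(\theta) := |\Omega_{\theta}|$ established in Proposition~\ref{tspace M}, together with the explicit first-derivative identity $\mathrm{D}G_1(0)[\delta\theta] = \int_{\partial B}\delta\theta\,\dd y$ computed there. Since $V^k$ is (locally) cut out by the constraints $G_1(\theta) = \pi$ and $G_2(\theta) = 0$, both inequalities simply express that the linearized volume constraint only holds to leading order away from the reference point $\theta = 0$.

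For \eqref{second order int}, I would apply Taylor's theorem to $G_1$ at $\theta = 0$. The smoothness of $G_1$ on a neighborhood of $0$ yields a uniform bound $\sup\|\mathrm{D}^2 G_1\|_{(H^k \times H^k)^*} \lesssim 1$ there, so that
\[
G_1(\theta) - G_1(0) - \mathrm{D}G_1(0)[\theta] = O(\|\theta\|_{H^k}^2).
\]
Because $G_1(\theta) = \pi = G_1(0)$ for every $\theta \in V^k$, rearranging immediately gives $\bigl|\int_{\partial B}\theta\,\dd y\bigr| = |\mathrm{D}G_1(0)[\theta]| \lesssim \|\theta\|_{H^k}^2$.

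For \eqref{bd int}, note that the map $\eta \mapsto \int_{\partial B}\eta\,\dd y$ is linear, so its Fr\'echet derivative at any $\theta$ is itself; the left-hand side is therefore just $\bigl|\int_{\partial B}\delta\theta\,\dd y\bigr|$. Since $\delta\theta \in T_{\theta}V^k$ lies in the kernel of $\mathrm{D}G_1(\theta)$, I would split
\[
\int_{\partial B}\delta\theta\,\dd y \;=\; \mathrm{D}G_1(0)[\delta\theta] \;=\; \bigl(\mathrm{D}G_1(0) - \mathrm{D}G_1(\theta)\bigr)[\delta\theta],
\]
and then invoke the Lipschitz bound $\|\mathrm{D}G_1(\theta) - \mathrm{D}G_1(0)\|_{(H^k)^*} \lesssim \|\theta\|_{H^k}$, which follows from the same second-derivative bound. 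This yields \eqref{bd int}.

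I do not expect any serious obstacle: once the smoothness of $G_1$ from Proposition~\ref{tspace M} is in hand, the estimates reduce to standard Taylor and mean-value arguments. The only point requiring mild care is to ensure that the second-derivative bound and hence the Lipschitz estimate for $\mathrm{D}G_1$ hold \emph{uniformly} on a fixed neighborhood of $0 \in H^k(\partial B)$; this is immediate from smoothness on such a neighborhood, as given by Proposition~\ref{tspace M}.
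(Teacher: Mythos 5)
Your proposal is correct and follows exactly the route the paper intends: the corollary is presented as a direct consequence of the smoothness of the constraint functional $G_1(\theta)=|\Omega_\theta|$ from Proposition \ref{tspace M}, with \eqref{second order int} coming from a second-order Taylor expansion at $\theta=0$ using $G_1(\theta)=G_1(0)=\pi$, and \eqref{bd int} from the fact that $T_\theta V^k\subset\ker \mathrm{D}G_1(\theta)$ together with the local Lipschitz continuity of $\mathrm{D}G_1$. The only point needing care — uniform local boundedness of $\mathrm{D}^2G_1$ near $0$ — is indeed supplied by the smoothness established in that proposition, so no gap remains.
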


Here we are using the natural identification of the tangent space with a subspace of $H^k$.

%\subsection{Some useful elementary estimates}
%\begin{lemma} We have the following estimates %\begin{align}
%\norm{x_1-(n_\theta\cdot e_1)\circ \chi_\theta}_{H^{k-1}}\lesssim \norm{\theta}_{H^k}\label{est n}
%\end{align}
%\end{lemma}

\section{The interior contribution} \label{S:inner}

In this section, we investigate the regularity properties of the interior potential 
\[
\lambda_{\theta,\eps} = \frac1{1+\eps \chi_{\theta}\cdot e_1} \partial_n \vin\circ \chi_{\theta}.
\]
We recall that the interior problem is given by
\begin{equation}
\label{16}
-\div\left(\frac1{1+\eps x_1} \grad \vin\right) = 4 (1+\eps x_1)\quad \mbox{in }\Omega_{\theta},\quad \vin=0\quad\mbox{on }\partial \Omega_{\theta},
\end{equation}
cf.~\eqref{32}, \eqref{32'}. The solution for $(\theta,\eps)=(0,0)$ and its normal derivative at the boundary are easily calculated,
\begin{align}
\vin(\theta=0,\eps=0)=1-|x|^2,\quad \de_n\vin(\eps=0,\theta=0)=-2.\label{phi0}
\end{align}

 Our goal is the following proposition:

 \begin{proposition}
          \label{C1}
     The mapping $(\theta,\eps)\mapsto \lambda_{\theta,\eps}$ is Fr\'echet smooth from $H^k(\partial B)$ to $H^{k-1}(\partial B)$ near  $(\theta,\eps)=(0,0)$. It holds that $\lambda_{0,0}=-2$ and
     \begin{align}
     \left.\mathrm{D}_{\eps}\right|_{(\theta,\eps)=(0,0)}\lambda_{\theta,\eps} & = -\frac12 y_1,\\
         \la\left.\mathrm{D}_{\theta}\right|_{(\theta,\eps)=(0,0)}\lambda_{\theta,\eps} ,\delta \theta\ra & = 2\mathcal{L} \delta \theta - 2\delta \theta,
     \end{align}
    where $\cL$ is the Dirichlet-to-Neumann operator associated with the Laplacian on the unit ball.
 \end{proposition}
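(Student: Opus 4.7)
The strategy is to pull the inner problem \eqref{16} back to the fixed unit disk $B$ via the diffeomorphism $\Chi_\theta(y)=(1+\Theta(y))y$ from Proposition \ref{tspace M}, establish Fr\'echet smoothness of the pulled-back solution in $H^{k+1/2}(B)$, and then combine with the trace theorem and the smooth geometric identities from Subsection 3.1 to get smoothness of $\lambda_{\theta,\eps}$ in $H^{k-1}(\partial B)$. The derivative formulas are then obtained by standard linearization arguments.

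\textbf{Step 1 (transport to the fixed domain).} Setting $\tilde\vin := \vin\circ \Chi_\theta$, the change of variables turns \eqref{16} into a Dirichlet problem
\begin{equation*}
-\div\bigl(A(\Theta,\nabla\Theta,\eps)\nabla\tilde\vin\bigr) = f(\Theta,\eps)\quad\text{in }B,\qquad \tilde\vin=0\text{ on }\partial B,
\end{equation*}
where $A$ and $f$ are rational in $\Theta,\nabla\Theta,\eps$ with denominators $\det\mathrm{D}\Chi_\theta$ and $1+\eps\Chi_\theta\cdot e_1$, both bounded away from zero on the manifold $\M$ for $(\theta,\eps)$ near $(0,0)$. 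At $(\theta,\eps)=(0,0)$ the operator reduces to $-\Delta$ with forcing $4$, giving the base solution $\tilde\vin_0=1-|y|^2$.

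\textbf{Step 2 (Fr\'echet smoothness).} Using the bounded extension $\theta\mapsto\Theta$ from $H^k(\partial B)$ into $H^{k+1/2}(B)$ together with Lemma \ref{FreLemma}(b), the maps $(\theta,\eps)\mapsto A(\Theta,\nabla\Theta,\eps)$ and $(\theta,\eps)\mapsto f(\Theta,\eps)$ are Fr\'echet smooth into $H^{k-1/2}(B)$. Since $-\Delta\colon H^{k+1/2}(B)\cap H_0^1(B)\to H^{k-3/2}(B)$ is an isomorphism, a standard implicit function theorem argument (applied to the map $(\theta,\eps,\tilde\vin)\mapsto -\div(A\nabla\tilde\vin)-f$ at the base point) yields that $\tilde\vin$ is a Fr\'echet smooth function of $(\theta,\eps)$ with values in $H^{k+1/2}(B)\cap H_0^1(B)$. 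Composing with the trace operator, the smooth pull-back of the normal $n_\theta\circ\chi_\theta\in H^{k-1}(\partial B)$ from Lemma \ref{DeriNormal}, the smooth Jacobian $\mathrm{D}\Chi_\theta$, and the smooth scalar $(1+\eps\chi_\theta\cdot e_1)^{-1}$ then shows that $\lambda_{\theta,\eps}=\frac{1}{1+\eps\chi_\theta\cdot e_1}\bigl[(n_\theta\circ\chi_\theta)\cdot(\mathrm{D}\Chi_\theta)^{-T}\nabla\tilde\vin\bigr]$ is Fr\'echet smooth from a neighborhood of $(0,0)$ into $H^{k-1}(\partial B)$. The value $\lambda_{0,0}=-2$ is immediate from $\tilde\vin_0=1-|y|^2$.

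\textbf{Step 3 (computing the derivatives).} For $\mathrm{D}_\eps\lambda_{\theta,\eps}|_{(0,0)}$, I linearize \eqref{16} at $\theta=0$ in $\eps$: writing $\vin=\vin_0+\eps w+o(\eps)$ and using $\vin_0=1-|y|^2$, a direct computation gives
\begin{equation*}
-\Delta w = 4y_1-\div(y_1\nabla\vin_0)=10 y_1 \text{ in }B,\qquad w=0\text{ on }\partial B,
\end{equation*}
whose explicit solution $w=\tfrac{5}{4}y_1(1-|y|^2)$ has $\partial_n w|_{\partial B}=-\tfrac{5}{2}y_1$. Differentiating the prefactor $(1+\eps y_1)^{-1}$ acting on $\lambda_{0,0}=-2$ contributes $2y_1$, and summing yields $-\tfrac12 y_1$. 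For $\mathrm{D}_\theta\lambda_{\theta,\eps}|_{(0,0)}$ I split at $\eps=0$ as $\vin=(1-|x|^2)+u_\theta$, where $u_\theta$ is harmonic in $\Omega_\theta$ with boundary value $-(1-|x|^2)|_{\partial\Omega_\theta}=2\theta+\theta^2$. The explicit expression $-2x\cdot n=-\tfrac{2(1+\theta)^2}{m_\theta}$ together with Lemma \ref{DeriNormal} linearizes to $-2\delta\theta$. For $(\partial_n u_\theta)\circ\chi_\theta$, set $\tilde u_\theta=u_\theta\circ\Chi_\theta$; then $\tilde u_0=0$ and $\partial_\theta\tilde u|_0\cdot\delta\theta=v$ solves $\Delta v=0$ in $B$, $v|_{\partial B}=2\delta\theta$; since $\nabla\tilde u_0=0$, no transport terms survive and the derivative equals $y\cdot\nabla v|_{\partial B}=2\mathcal{L}\delta\theta$. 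Adding the two contributions gives $2\mathcal{L}\delta\theta-2\delta\theta$.

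\textbf{Main obstacle.} The principal technical point is Step 2: verifying that the isomorphism property of $-\Delta$ survives deformation to $-\div(A(\Theta,\nabla\Theta,\eps)\nabla\cdot)$ at the precise Sobolev level $H^{k+1/2}\to H^{k-3/2}$, which forces the bookkeeping of the half-derivative loss between the boundary space $H^k(\partial B)$ and the interior extension $H^{k+1/2}(B)$. Once this is in place, the remaining algebraic and boundary operations are routine applications of Lemma \ref{FreLemma}, and the derivative computations reduce to the short calculations sketched above.
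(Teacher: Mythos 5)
Your proposal is correct and follows essentially the same route as the paper: pull the inner problem back to $B$ via $\Chi_\theta$, obtain Fr\'echet smoothness of $\vin\circ\Chi_\theta$ from the implicit function theorem combined with Lemma \ref{FreLemma}(b) and the invertibility of the Dirichlet Laplacian between $H_0^{k+1/2}(B)$ and $H^{k-3/2}(B)$, then compose with the trace, $(\mathrm{D}\Chi_\theta)^{-T}$, the smooth normal and the prefactor. Your derivative computations (linearizing in $\eps$ to get $-\tfrac52 y_1$ plus $2y_1$ from the prefactor, and the split $\vin=\phi_0+u_\theta$ with harmonic $u_\theta$ of boundary datum $2\theta+\theta^2$ giving $2\mathcal{L}\delta\theta-2\delta\theta$) reproduce the paper's results, differing only cosmetically in how the shape derivative is organized.
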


 In particular, this also implies that $\lambda_{\theta,\eps}$ is continuously Fr\'echet-differentiable as a map from $\mathcal{M}$ to $H^{k-1}(\partial B)$.

Before proceeding, we recall a few properties of the Dirichlet-to-Neumann operator that will be relevant for our subsequent analysis.

 \begin{remark}\label{SeriesDirToNeumann}
The Dirichlet-to-Neumann operator $\cL$ is defined on $H^{\frac{1}{2}}(\de B)$ by taking some boundary datum $g\in H^{\frac{1}{2}}(\de B)$, solving the equation \begin{align}
\Delta f& =0\quad \text{in $B$},\\
 f&=g\quad
\text{on $\de B$}
\end{align}
and setting $\cL g=\de_n f|_{\de B}$. On the Fourier level, it takes a particularly simple form. Indeed, Fourier transforming the Laplace equation in angular direction, we obtain for any wave number $l\in \Z$ and radius $s\in(0,1)$ that
\[
\hat f_l''(s) + \frac1s \hat f_l'(s) = \frac{l^2}{s^2} \hat f_l(s),\quad \hat f_l(1)=\hat g_l.
\]
This ODE has the solution $\hat f_l(s) = s^{|l|}\hat g_l$, and thus 
\[
(\widehat{\cL g})_l = \hat f'_l(1) = |l|\hat g_l.
\]

\noindent From this Fourier representation, we draw three immediate conclusions:
\begin{enumerate}
\item The Dirichlet-to-Neumann operator is bounded from $H^s(\de B)$ to $H^{s-1}(\de B)$.
\item The operator maps constants to zero.
\item The operator maps $y_1$ to $y_1$.
\end{enumerate}

 The last observation implies that the derivative with respect to $\theta$ in the previous lemma is vanishing on $x_1$. This degeneracy reflects the translation invariance of the limiting elliptic problem in \eqref{16}. Moreover, the above observations yield
\begin{equation}
    \label{303}
    \la \mathcal{L} g,y_1\ra  = \la g, y_1\ra. 
\end{equation}
\end{remark}

The above proposition is an immediate consequence of the following study of
  smoothness in $\eps$ and $\theta$ for the normal derivative in the general interior problem.
 
\begin{lemma}\label{bulkDiffable}
The function $(\eps,\theta)\mapsto (\partial_n\vin)\circ \chi_{\theta}$ is well-defined and Fr\'echet-smooth on a small open neighborhood of $(\eps,\theta)=(0,0)\in \R\times H^{k}(\de B)$ and takes values in $H^{k-1}(\de B)$.  It holds that
\begin{align}
\left. \mathrm{D}_{ \eps}\right|_{(\eps,\theta)=(0,0)} (\partial_n \vin\circ \chi_{\theta} )&= -\frac{5}{2}   y_1,\\
 \la \mathrm{D}_\theta\big|_{(\eps,\theta)=(0,0)} 
(\partial_n \vin\circ \chi_{\theta}),\delta\theta\ra &   =  2 \cL \delta \theta -2 \delta \theta,
\end{align}
for any $\delta \theta\in H^k$.
\end{lemma}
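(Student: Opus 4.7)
The plan is to pull the inner problem back to the fixed unit disk $B$ via the diffeomorphism $\Chi_\theta$ of Proposition \ref{tspace M}, invoke the implicit function theorem to secure smooth dependence of the pulled-back solution on $(\theta,\eps)$, and then compute the derivatives at $(0,0)$ by direct linearisation (in $\eps$) and a shape-calculus argument (in $\theta$). Setting $w = w_{\theta,\eps} := \vin\circ \Chi_\theta$, a change of variables in \eqref{16} gives
\[
-\div_y(A_{\theta,\eps}\grad_y w) = F_{\theta,\eps}\quad\text{in } B,\qquad w = 0\quad\text{on }\partial B,
\]
with $A_{\theta,\eps} = J_\theta(1+\eps (\Chi_\theta)_1)^{-1}(D\Chi_\theta)^{-1}(D\Chi_\theta)^{-T}$, $F_{\theta,\eps} = 4 J_\theta(1+\eps(\Chi_\theta)_1)$, and $J_\theta = \det D\Chi_\theta$. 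Since $\Theta\in H^{k+\frac12}(B)\hookrightarrow C^{k-1}$ for $k\ge 5$, Lemma \ref{FreLemma} yields Fr\'echet smoothness of $(\theta,\eps)\mapsto (A_{\theta,\eps},F_{\theta,\eps})$ into $H^{k-\frac12}(B)$ near $(0,0)$, with $A_{\theta,\eps}$ uniformly elliptic. Applying the implicit function theorem to
\[
\Phi(w,\theta,\eps) := -\div(A_{\theta,\eps}\grad w) - F_{\theta,\eps}
\]
on a neighbourhood of $(1-|y|^2,0,0)$ in $\{u\in H^{k+\frac12}(B):u|_{\partial B}=0\}\times H^k(\partial B)\times\R$, with target $H^{k-\frac32}(B)$ and with $w$-linearisation at the base point equal to the Dirichlet Laplacian (an isomorphism by classical elliptic regularity on the disk), produces a unique smooth map $(\theta,\eps)\mapsto w_{\theta,\eps}$. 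The chain rule then gives
\[
(\partial_n\vin)\circ\chi_\theta = \bigl[(D\Chi_\theta)^{-T}\grad_y w\bigr]\big|_{\partial B}\cdot(n_\theta\circ\chi_\theta),
\]
and smoothness into $H^{k-1}(\partial B)$ follows from the $H^{k-\frac12}(B)\to H^{k-1}(\partial B)$ trace of $\grad_y w$, Lemma \ref{DeriNormal}, and the algebra property of $H^{k-1}(\partial B)$.

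At the base point, $\Chi_0 = \Id$, $w_0 = 1-|y|^2$, and $n_0\circ\chi_0 = y$ give $(\partial_n\vin)\circ\chi_0 = -2$. For the $\eps$-derivative, $\Chi_\theta$ and $n_\theta\circ\chi_\theta$ are $\eps$-independent at $\theta=0$, so only $w$ contributes; differentiating $-\div\bigl((1+\eps y_1)^{-1}\grad w\bigr)=4(1+\eps y_1)$ at $\eps=0$ produces $-\Delta(\partial_\eps w) = 10\,y_1$ in $B$ with vanishing trace, whose explicit solution $\partial_\eps w = \tfrac{5}{4}(1-|y|^2)y_1$ yields $\partial_\eps[(\partial_n\vin)\circ\chi_\theta]|_{(0,0)} = -\tfrac{5}{2}y_1$. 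For the $\theta$-derivative, the classical shape derivative $\vin'$ at $\eps=0$ solves $-\Delta\vin' = 0$ in $B$ with $\vin' = -(\partial_n\vin_0)(V\cdot n) = 2\delta\theta$ on $\partial B$, where $V = D_\theta\Chi_\theta|_0\delta\theta$ satisfies $V\cdot n = \delta\theta$ on $\partial B$; hence $\partial_n\vin' = 2\cL\delta\theta$. The material-derivative identity $D_\theta(\grad\vin\circ\chi_\theta)\delta\theta = \grad\vin' + D^2\vin_0\cdot V$ combined with $D^2\vin_0 = -2I$ then gives $D_\theta(\grad\vin\circ\chi_\theta)\delta\theta\cdot n_0|_{\partial B} = 2\cL\delta\theta - 2\delta\theta$, while the contribution from $D_\theta(n_\theta\circ\chi_\theta)$ in Lemma \ref{DeriNormal} vanishes against $\grad\vin_0 = -2y$ since $y\perp y^\perp$. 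Summing yields the claimed formula.

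The principal technical difficulty is the regularity bookkeeping through the non-integer Sobolev scale: the extension $\Theta\in H^{k+\frac12}(B)$ is just good enough to render the pulled-back coefficients $H^{k-\frac12}$-smooth and the solution $w\in H^{k+\frac12}(B)$, so that the boundary trace of $\grad_y w$ lands precisely in $H^{k-1}(\partial B)$. Once this functional framework is in place, the derivative computations themselves are routine exercises in linearisation and shape calculus.
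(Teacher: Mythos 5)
Your proposal is correct and follows essentially the same route as the paper's proof: pull the problem back to the unit disk via the extended diffeomorphism $\Chi_\theta$, apply the implicit function theorem to the transformed elliptic operator (whose linearisation at $(1-|y|^2,0,0)$ is the invertible Dirichlet Laplacian $H_0^{k+\frac12}(B)\to H^{k-\frac32}(B)$), recover the Neumann trace through $(\mathrm{D}\Chi_\theta)^{-T}\grad_y w$ and Lemma \ref{DeriNormal}, and compute the $\eps$-derivative from $-\Delta(\partial_\eps w)=10\,y_1$ and the $\theta$-derivative from the harmonic shape derivative with boundary datum $2\delta\theta$ plus the Hessian term $-2\delta\theta$. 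The derivative values and the regularity bookkeeping agree with the paper's argument, so no changes are needed.
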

 
We provide the short proof of Proposition \ref{C1}.

\begin{proof}[Proof of Proposition \ref{C1} assuming Lemma \ref{bulkDiffable}]
The smoothness follows directly from the preceding Lemma with the product rule and Lemma \ref{FreLemma} b).
The formulas for the derivative hold by the product rule because \begin{align}
\mathrm{D}_\eps\big|_{(\theta,\eps)=(0,0)}\frac{1}{1+\eps\chi_\theta\cdot e_1}&=-y_1\\
\la\mathrm{D}_\theta\big|_{(\theta,\eps)=(0,0)}\frac{1}{1+\eps\chi_\theta\cdot e_1},\de\theta\ra&=0.
\end{align}
\end{proof}

The differentiability of elliptic equations with respect to the domain is a classical topic see e.g.\ the book \cite{MR1215733} for further reading.

\begin{proof}[Proof of Lemma \ref{bulkDiffable}] 
We shall first formally derive the form of the derivatives and later justify why smoothness holds.

\medskip

\noindent\textbf{Step 1.} We begin with the formal derivation of the derivative with respect to $\eps$. Differentiating the elliptic problem \eqref{16} with respect to $\eps$ yields that 
\[
- \laplace \left.\mathrm{D}_{\eps}\right|_{(\eps,\theta)=(0,0)} \vin +  \partial_1 \phi_0  = 8 x_1,\quad \left.\mathrm{D}_{\eps}\right|_{(\eps,\theta)=(0,0)} \vin = 0\quad \mbox{on }\partial B,
\]
where $\phi_0=1-|x|^2$ is the solution to \eqref{16} at $(\theta,\eps)=(0,0)$  and thus, the problem for $\left.\mathrm{D}_{\eps}\right|_{(\eps,\theta)=(0,0)} \vin$ can be rewritten as
\[
-\laplace \left.\mathrm{D}_{\eps}\right|_{(\eps,\theta)=(0,0)} \vin =10  x_1\quad \mbox{in }B,\quad \left.\mathrm{D}_{\eps}\right|_{(\eps,\theta)=(0,0)} \vin = 0\quad \mbox{on }\partial B.
\]
The solution can be explicitly computed by rewriting the problem in polar coordinates and making the ansatz $\left.\mathrm{D}_{\eps}\right|_{(\eps,\theta)=(0,0)} \vin = f(s)\cos \alpha$, so that
\[
-s^2 f'' - s f' +f = 10  s^3.
\]
The Dirichlet boundary condition reads $f(1)=0$. The function $f(s) + \frac{5}{4}   s^3$ solves the corresponding homogeneous Cauchy--Euler equation, which is then solved via standard methods. It has a unique solution that is regular at $s=0$.  We eventually arrive at
\[
\left.\mathrm{D}_{\eps}\right|_{(\eps,\theta)=(0,0)} \vin = \frac{5}{4}     \left(s-s^3\right)\cos \alpha,
\]
and the Neumann boundary trace thus reads
\[
\partial_n \left.\mathrm{D}_{\eps}\right|_{(\eps,\theta)=(0,0)} \vin = -\frac{5}{2}   x_1.
\]
 
\medskip
 
\noindent\textbf{Step 2.} We turn now to the formal calculation of the derivative with respect to $\theta$. In the interior, we obtain that \begin{align}
-\laplace \la \left.\mathrm{D}_{\theta}\right|_{(\eps,\theta)=(0,0)} \vin,\delta \theta\ra = 0\quad\text{in $B$,}
\end{align}
because $\Delta \phi_0$ is constant. To differentiate the boundary condition, we write $\vin\circ \chi_{\theta}=0$ on $\de B$. Thus, differentiation with respect to $\theta$ yields
\[
 \la \left.\mathrm{D}_{\theta}\right|_{(\eps,\theta)=(0,0)} \vin,\delta \theta\ra            =- \nabla\phi_0\cdot \la \mathrm{D}_\theta|_{(\eps,\theta)=(0,0)}\chi_\theta,\delta\theta\ra =-\de_n\phi_0 \delta\theta
\]
and using the precise form of $\phi_0$, we conclude
\begin{equation}
\label{18}
-\laplace \left(\la \left.\mathrm{D}_{\theta}\right|_{(\eps,\theta)=(0,0)} \vin,\delta \theta\ra \right) = 0\quad\mbox{in }B,\quad \la \left.\mathrm{D}_{\theta}\right|_{(\eps,\theta)=(0,0)} \vin,\delta \theta \ra=   -2\delta \theta\quad\mbox{on }\partial B.
\end{equation}
We have to derive the derivative of the Neumann trace. We thus compute, using the explicit formula for the derivative of the normal found in Lemma \ref{DeriNormal}, 
\begin{align}
\la \left. \mathrm{D}_{\theta}\right|_{(\eps,\theta)=(0,0)} \left(\partial_n\vin\circ \chi_{\theta}\right),\delta \theta\ra & = \la \left.\mathrm{D}_{\theta}\right|_{(\eps,\theta)=(0,0)} \left((n_{\theta}\cdot \grad \vin)\circ \chi_{\theta}\right),\delta \theta\ra \\ 
 &=  n\cdot \la \left.\mathrm{D}_{\theta}\right|_{(\eps,\theta)=(0,0)} \left(\grad \vin\circ \chi_{\theta}\right),\delta \theta \ra -\partial_{\tau}\phi_0 \partial_{\tau}\delta \theta.
\end{align}
We have already seen that $\phi_0$ is a radial function and thus the second term on the right-hand side is zero. Using elementary differentiation rules, we then obtain
\[
\la \left.\mathrm{D}_{\theta}\right|_{(\eps,\theta)=(0,0)} \left(\partial_n\vin\circ \chi_{\theta}\right),\delta \theta \ra  = \la \partial_n \left.\mathrm{D}_{\theta}\right|_{(\eps,\theta)=(0,0)}   \vin ,\delta \theta \ra + \partial_n^2 \phi_0 \delta\theta.
\]
Using again the precise form of $\phi_0$ and the information in \eqref{18}, we see that 
\[
\la \left.\mathrm{D}_{\theta}\right|_{(\eps,\theta)=(0,0)} \left(\partial_n\vin\circ \chi_{\theta}\right),\delta \theta \ra = 2 \la \mathcal{L}-1, \delta \theta\ra .
\]
 \smallskip

\noindent  In the rest of the proof, we will prove the smoothness in $(\theta,\eps)$.

\medskip

\noindent\textbf{Step 3.} We shall again use an extension $\Theta\in H^{k+\frac{1}{2}}(B)$ of $\theta$ with $\Theta|_{\de B}=\theta$ and let again $\Chi_\theta(y)=(1+\Theta(y))y$ denote the corresponding extension of $\chi_\theta$ as described in the proof of Proposition \ref{tspace M}.

We first argue the differentiability of $\vin\circ \Chi_\theta$ in $(\Theta,\eps)$ as a map from $H^{k+\frac{1}{2}}(B)\times [-\delta,\delta]$ to $H^{k+\frac12}(B)$ for small $\delta$.

By pullback we obtain that the  function $\tilde{\phi}:=\vin(\theta,\eps)\circ \Chi_\theta$   fulfills the elliptic problem
 \begin{equation}\label{PDEonBall}
 \begin{aligned} 
 N(\Theta,\eps,\tilde\phi):=\div\left( \mathcal{C}(\Theta,\eps) \grad_y \tilde \phi\right)+ 4(1+\eps (\Chi_{\theta})_1)|\det\text{D}\Chi_\theta|&=0\quad \mbox{in }B,\\
 \tilde \phi  &=0\quad \mbox{on }\partial B,
 \end{aligned}
 \end{equation}
 where the coefficient matrix $\mathcal{C}$ is given by
 \begin{align*}
 \mathcal{C} & = \frac{|\det \mathrm{D}\Chi_\theta|}{1+\eps (\Chi_\theta)_1} (\mathrm{D} \Chi_\theta)^{-1} (\mathrm{D}\Chi_\theta)^{-T}.
% &  = \frac1{1+\eps \chi_1}\frac1{|\det D \chi|} \begin{pmatrix}
%  |\partial_2 \chi|^2 & -\partial_1\chi \cdot \partial_2\chi\\ -\partial_1\chi\cdot \partial_2\chi & |\partial_1\chi|^2.
%   \end{pmatrix}
 \end{align*}

\noindent We would like to apply the implicit function theorem in a neighborhood of $(0,0,\phi_0)$ to this in order to show that the solution $\tilde{\phi}$ is continuously Fr\'echet differentiable. We do this by considering the functional \begin{align}
N:H^{k+\frac{1}{2}}(B)\times [-\delta,\delta]\times H_0^{k+\frac{1}{2}}(B)\rightarrow H^{k-\frac{3}{2}}(B)
\end{align}
 defined as above, by the fact that all involved spaces are closed under composition with smooth functions as detailed in the preliminary section, this is well-defined. The boundary condition is included in the functional as we use the space $H_0^{k+\frac{1}{2}}(B)$ for the third variable.
 
 Clearly $(0,0,\phi_0)$ is a zero of this functional. The functional is Fr\'echet smooth around $(0,0,\phi_0)$ by Lemma  \ref{FreLemma} b), and the pointwise and Fr\'echet derivatives agree.
 
The derivative of $N$ with respect to $\tilde{\phi}$ at $(0,0,\phi_0)$ is $-\Delta$. The Dirichlet-Laplacian is invertible from $H^{k-\frac{3}{2}}(B)$ to $H_0^{k+\frac{1}{2}}(B)$, see for instance \cite[Thm.\ 5.1]{LionsMagenes}.

Hence the implicit function theorem shows the existence of the derivative of $\tilde{\phi}$, furthermore as $N$ is Fr\'echet smooth, $\tilde{\phi}$ is in fact Fr\'echet smooth too.

\medskip

\noindent\textbf{Step 4.} Because the map $\theta\rightarrow \Theta$ is linear and bounded, we see that $\vin\circ\Chi_\theta$ is also Fr\'echet differentiable in $(\theta,\eps)$. Furthermore, by the linearity of the trace and the product and chain rules, the map \begin{align}
\theta\mapsto(\nabla\vin)\circ\Chi_\theta|_{\de B}=(\nabla\Chi_\theta)^{-1}\nabla (\vin \circ\Chi_\theta)|_{\de B} 
\end{align}
is Fr\'echet smooth in $H^{k-1}(\de B)$, where we used the fact that $\nabla\Chi_\theta$ is invertible because $\Chi_\theta$ is a diffeomorphism (since it is a perturbation of the identity).

Finally, the normal is continuously differentiable in $H^{k-1}(\partial B)$ by Lemma \ref{DeriNormal} and hence the normal derivative is  Fr\'echet smooth by the product rule.
\end{proof}

\section{The homogenous equation}\label{S:outer}

This section is devoted to the construction and properties of $\mu_{\theta,\eps}(S)$. 
The construction requires quite a few technical preparations and is carried out in Subsection \ref{subsec mu}, which also contains the main result of the section,  Proposition \ref{mu diffable}.

\subsection{Reformulation as an integral equation}\label{reform}
We have noticed in our reformulation \eqref{41} of the jump condition that we only need to understand the normal derivative of the exterior homogeneous part $\vout$ along the boundary of the domain $\Omega_{\theta}$. It holds that
\[
\vout = \frac{W}2 (1+\eps x_1)^2 +\gamma\quad \mbox{in }\Omega_{\theta},
\]
and hence
\[
\frac1{1+\eps x_1}\partial_{n} \vout -\eps W n\cdot e_1 = -\left[\frac1{1+\eps x_1}\partial_n \vout\right].
\]
Recall that in this formulation, the normal $n$, including the one in the normal derivative, points in the direction outside of $\Omega_{\theta}$. 
Moreover, writing the elliptic equation in \eqref{33} globally in $\HH_{1/\eps}$, we obtain
\begin{equation}
\label{45}
-\div\left(\frac1{1+\eps x_1} \grad\vout\right) = \frac1{1+\eps x_1}\left[\partial_n \vout\right]\mathcal{H}^1\mres\partial \Omega_{\theta} \quad \mbox{in }\HH_{1/\eps}
\end{equation}
in the sense of distributions.

The Dirichlet condition in \eqref{34} can thus, using a single layer potential, be rewritten in the form
\begin{equation}
\label{44}
\int_{\partial \Omega_\theta} K_{0,\eps}(x, \tilde x) \frac1{1+\eps \tilde x_1} \left[(\partial_n \vout)( \tilde x)\right]\, \mathrm{d}\Ha^1( \tilde x) = \frac{W}2 (1+\eps x_1)^2 + \gamma \text{ on $\de \Omega_\theta$},
\end{equation}
for any $x\in \partial \Omega_{\theta}$, where $K_{0,\eps}$ denotes the fundamental solution of the elliptic  operator $-\div(\frac1{1+\eps x_1}\nabla\cdot )$ in $\HH_{1/\eps}$. (For a rigorous proof that this is compatible with the conditions at the axis and $\infty$, see, e.g.\ Lemma 2.1.4  in \cite{Meyer}.)

 We will pull back this equation onto $\de B$ in order to analyze the asymptotic of the boundary values. Then it reads as \begin{equation}\begin{aligned}
\label{eq B}
\mel\int_{\partial B} K_{\theta,\eps}(y,\tilde y) \frac1{1+\eps(\chi_\theta(\tilde y))_1} \left[(\partial_n \vout\circ \chi_\theta)(\tilde y)\right] \,\mathrm{d}\Ha^1(\tilde y) \\
&= \frac{W}2 \big(1+\eps(\chi_\theta(y))_1\big)^2 + \gamma \text{ on $\de B$},
\end{aligned}\end{equation}
where we set \begin{align}
K_{\theta,\eps}(y,\tilde y):= m_\theta (\tilde y)K_{0,\eps}(\chi_{\theta}(y),\chi_{\theta}(\tilde{y})).\label{def K}
\end{align}
The Jacobian $m_\theta$ was defined in \eqref{20}.

In order to derive the precise form of the kernel $K_{\theta,\eps}$, we go back to the original operator  $-\div(\frac{1}{r}\nabla\cdot)$ in \eqref{27} with the boundary and decay conditions \eqref{30}, \eqref{31}, \eqref{32b}.  Its fundamental solution $K_{\HH}$ takes the form
\begin{equation}
\label{43}
K_{\HH}((r,z),(\tilde r,\tilde z)) =\frac1{2\pi} \sqrt{r\tilde r} F\left(\frac{(r-\tilde r)^2 + (z-\tilde z)^2}{r\tilde r}\right),
\end{equation}
where the function $F:(0,\infty)\to \R$ is defined by
\begin{align*}
F(s) &= \int_0^{\pi} \frac{\cos t}{\sqrt{2(1-\cos t)+s}}\dt,
\end{align*}
see, for instance, \cite{FengSverak15} or Chapter 19 of the lecture notes \cite{SverakNotes}. For further reference, we notice the following expansion:

\begin{lemma}\label{LemmaF}
This function has an expansion \begin{align}\label{SeriesK}
F(s)=\log(\frac{8}{\sqrt{s}})-2+f_1(s)+f_2(s)\log(s)
\end{align}
for positive $s$ near $0$, where $f_1$ and $f_2$ are smooth functions with $f_1(0)=f_2(0)=0$. 
\end{lemma}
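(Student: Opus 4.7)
The plan is to reduce $F(s)$ to a combination of complete elliptic integrals and invoke their classical asymptotic behavior near the singular modulus $k = 1$. First I would use the half-angle identity $2(1-\cos t) = 4\sin^2(t/2)$, the substitution $t = 2u$, and $\cos(2u) = 1 - 2\sin^2 u$ to obtain
\begin{align*}
F(s) \;=\; \int_0^{\pi/2}\frac{1 - 2\sin^2 u}{\sqrt{\sin^2 u + s/4}}\,du \;=\; \Bigl(1 + \tfrac{s}{2}\Bigr) I(s) - 2\, J(s),
\end{align*}
where the second equality follows by writing $\sin^2 u = (\sin^2 u + s/4) - s/4$ and setting $I(s) := \int_0^{\pi/2}(\sin^2 u + s/4)^{-1/2}\,du$ and $J(s) := \int_0^{\pi/2}\sqrt{\sin^2 u + s/4}\,du$. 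The substitution $u \mapsto \pi/2 - u$ then identifies
\begin{align*}
I(s) = \frac{2}{\sqrt{4+s}}\,K(k), \qquad J(s) = \frac{\sqrt{4+s}}{2}\,E(k), \qquad k^2 = \frac{4}{4+s}, \quad k'^2 = \frac{s}{4+s},
\end{align*}
where $K, E$ denote the complete elliptic integrals of the first and second kind. As $s \to 0^+$, the modulus $k$ tends to the singular value $1$, which is precisely where $K$ blows up logarithmically.

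The second step is to invoke the classical asymptotic expansions of $K$ and $E$ at $k = 1$: one has $K(k) = a(k'^2)\log(4/k') + b(k'^2)$ and $E(k) = c(k'^2)\log(4/k') + d(k'^2)$, where $a, b, c, d$ are real-analytic near zero with $a(0) = 1$, $b(0) = 0$, $c(0) = 0$, $d(0) = 1$. These representations follow from the hypergeometric series for $K$ and $E$ combined with the Gauss/Kummer connection formula, or alternatively from the Landen/AGM iteration. Since $s \mapsto k'^2(s) = s/(4+s)$ is analytic near $s = 0$ and vanishes there, the compositions $s \mapsto a(k'^2(s))$ and so on are smooth functions of $s$.

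Finally I collect terms. The formula becomes $F(s) = A(s)\log(4/k') + B(s)$ with $A, B$ smooth in $s$, and since
\begin{align*}
\log(4/k') \;=\; \log 8 - \tfrac{1}{2}\log s + \tfrac{1}{2}\log(1 + s/4),
\end{align*}
the singular term $-\tfrac{1}{2}\log s$ is cleanly isolated. Evaluating at $s = 0$ one reads off $A(0) = a(0) - 2c(0) = 1$ and $B(0) = b(0) - 2 d(0) = -2$, producing exactly the leading term $\log(8/\sqrt{s}) - 2$. Writing $A = 1 + \tilde A$, $B = -2 + \tilde B$ with $\tilde A(0) = \tilde B(0) = 0$ and distributing yields the decomposition
\begin{align*}
f_2(s) = -\tfrac{1}{2}\tilde A(s), \qquad f_1(s) = \tilde A(s)\log 8 + \tfrac{1}{2} A(s)\log(1 + s/4) + \tilde B(s),
\end{align*}
both smooth and vanishing at $s = 0$, as required. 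The main obstacle is not conceptual but is the bookkeeping verifying $A(0) = 1$ and $B(0) = -2$, so that the constants $\log 8$ and $-2$ in the leading term emerge correctly; this reduces entirely to the classical values $a(0) = 1, b(0) = 0, c(0) = 0, d(0) = 1$ of the coefficients in the elliptic-integral expansions.
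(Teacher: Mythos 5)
Your proposal is correct and follows essentially the same route as the paper: after the substitution $t=2u$ you arrive at exactly the paper's identity $F(s)=\frac{2+s}{\sqrt{4+s}}K(k)-\sqrt{4+s}\,E(k)$ with $k^2=4/(4+s)$, and then invoke the classical logarithmic expansions of $K$ and $E$ at the singular modulus (the paper cites Gradshteyn--Ryzhik 8.113--8.114 in powers of $1-k$, you use the equivalent form in powers of $k'^2$). Your version merely carries out the bookkeeping of $f_1$ and $f_2$ more explicitly than the paper does, which is fine.
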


\begin{proof}
Via standard techniques, we rewrite $F(s)$ in terms of elliptic integrals,
\begin{align*}
F(s) &  = (2+s) \int_0^{\frac{\pi}2} \frac1{\sqrt{4\sin^2 (\tau) +s}} \dd\tau - \int_0^{\frac{\pi}2} \sqrt{4\sin^2(\tau) +s} \dd\tau\\
& = \frac{2+s}{\sqrt{4+s}} K\left(\sqrt{\frac{4}{4+s}}\right) -\sqrt{4+s} E\left(\sqrt{\frac{4}{4+s}}\right),
\end{align*}
where $E(k)$ and $K(k)$ denote, respectively, the complete elliptic integrals of the first and second kind with eccentricity $k$, see e.g.\ \cite{byrd2013handbook} for background reading. Both have well-known power series expansions near $k=1$, which are of the form
\[
\log \frac1{1-k} \left(a_0 + a_1 (1-k) + a_2 (1-k)^2 +\dots\right) + b_0 + b_1 (1-k) + b_2 (1-k)^2 +\dots
\]
with positive radii of convergence, see e.g.\  formulas 8.113 and 8.114 in \cite{gradshteyn2014table}. The leading order coefficients of $F(s)$ for $s$ small  are easily calculated, for instance
\[
F(s) = 2\int_0^{\frac{\pi}2} \frac{\cos \tau}{\sqrt{4\sin^2(\tau) +s}}\dd\tau +O(1) = \int_0^2 \frac1{\sqrt{\xi^2 +s}}\dd\xi +O(1) = \log \frac{1}{\sqrt{s}} + O(1).
\] 
We refer to Chapter 19 in \cite{SverakNotes} for some more details.
\end{proof}

 Changing now variables from $(r,z)$ on $\partial \Omega$ to $y\in \partial B$  yields
\begin{equation}
    \label{200}
K_{\theta,\eps}(y,\tilde y) = m_{\theta}(\tilde y)  K_{\HH}\left(\binom{1}{0} +\eps    y(1+\theta(y)) , \binom{1}{0} +\eps  \tilde y   (1+\theta(\tilde y)) \right).
\end{equation}
We shall occasionally  write $\K_{\theta,\eps}$ for the associated linear operator, 
\begin{equation}
\label{46}
\K_{\theta,\eps}f(y) = \int_{\partial B} K_{\theta,\eps}(y,\tilde y)f(\tilde y)\dd\Ha^1(\tilde y).
\end{equation}
With $\mu_{\theta,\eps}$, introduced  in Section \ref{proof strat}, 
\[
\mu_{\theta,\eps}(y) = \frac1{1+\eps (\chi_\theta)_1(y)} \left[(\partial_n \vout \circ \chi_{\theta} )(y)\right],
\]
the boundary equation  \eqref{44} now takes the short form 
\begin{equation}
\label{47}
 \K_{\theta,\eps} \mu_{\theta,\eps} = \frac{W}2(1+\eps (\chi_\theta)_1(y))^2 + \gamma.
\end{equation}

%
% Denoting by $K_{\eps}$ the kernel associated to the elliptic problem
%\begin{align*}
%-\div\left(\frac1{1+\eps x_1} \grad \varphi\right) &= f\quad \mbox{in }\HH_{1/\eps},\\
%\varphi &=0\quad \mbox{on }\partial \HH_{1/\eps},\\
%\frac1{1+\eps x_1} |\grad\varphi| &\to 0\quad\mbox{as }|x|\to \infty,
%\end{align*}
%the Dirichlet condition in \eqref{40} becomes
%\begin{equation}
%\label{42}
%\int_{\partial \Omega_{\theta}} K_{\eps}(x,\tilde x) \frac{1}{1+\eps \tilde x_1}\left[\partial_n \vout(
%\tilde x)\right]\, d\cH^1(\tilde x) = \frac{W}2 (1+\eps x_1)^2 +\gamma\quad \mbox{on }\partial \Omega_{\theta}.
%\end{equation}
We have to show that $\mu_{\eps,\theta}$  depends continuous  differentiably on $(\eps,\theta)$.
This is nontrivial as the kernel $K_{\theta, \eps}$ is (logarithmically) diverging as $\eps\to 0$, see Lemma \ref{L5} below, which is, in fact, a behavior that is inherited by the velocity $W$ and the flux constant $\gamma$. The limit $\eps\rightarrow 0$ of \eqref{47} is thus singular, so we can not expect $\mu_{\theta,\eps}$ to depend smoothly on $\eps$.
In order to make sure that at least a first derivative exists, we will need to carefully analyze the behavior of the integral kernel $K_{\theta,\eps}$ and we will have to construct appropriate boundary values.

We start our step-by-step analysis with a study of the renormalized limiting operator:
Considering   the expansion of $F$ in \eqref{SeriesK}, it is not difficult to observe formally that
\[
2\pi K_{\theta,\eps}(y,\tilde y) =  m_\theta(\tilde y)\left(\log\left(\frac1
\eps \right)+ \log(8)-2 +  m_{\theta}(\tilde y) \log\frac1{|\chi_{\theta}(y)- \chi_{\theta}(\tilde y)|}\right) +o(1),
\]
and it is thus natural to study   the kernel
\[
\widetilde K_{\theta,0}(y,\tilde y) =  \frac{1}{2\pi}{m_{\theta}(\tilde y)}\log\frac1{|\chi_{\theta}(y)-\chi_{\theta}(\tilde y)|}
\]
as an auxiliary object, which is simply the pullback of the Newtonian potential $-\frac{1}{2\pi}\log|x-\tilde{x}|$ to $\de B$.  We denote by $\widetilde \K_{\theta,0}$ the associated linear maps, cf.~\eqref{46}.

Picking (formally) $\eps=0$ in \eqref{47}, we are led to the auxiliary problem 
\[
\widetilde \K_{\theta,0}[\de_n\phi\circ\chi_{\theta}]=\const\quad \mbox{on }\de B,
\]
which corresponds to solving the Laplace equation in $\R^2\setminus \partial \Omega_\theta$ with constant Dirichlet boundary data,
    \[
    -\Delta \phi = 0\quad \mbox{in }\R^2\setminus \partial\Omega_{\theta},\quad \phi = \const\quad \mbox{on }\partial \Omega_{\theta}.
    \]
It is clear that $\phi$ is constant inside of $\Omega_{\theta}$, and one can select a unique solution by requiring that 
\[
\int_{\partial \Omega_{\theta}} [\partial_n\phi]\, \dd\Ha^1 = 1.
\]
We are thus concerned with the classical capacity potential problem for the set $\Omega_{\theta}$.

Going back to the notation introduced earlier, the normal derivative jump $\tilde \mu_{\theta,0}  = [\partial_n \phi\circ \chi_{\theta}]$ solves the problem
\begin{align}
    \widetilde \K_{\theta,0}\tilde{\mu}_{\theta,0}=\const, \quad \int_{\partial B} m_\theta \tilde{\mu}_{\theta,0}\dy=1.\label{def mu0}
\end{align}
As mentioned earlier, we will use the solution $\tilde{\mu}_{\theta,0}$  as the formal interpretation of $\mu_{\theta,\eps}$ at $\eps=0$. Differentiability properties will be studied below in Lemma \ref{Lem:muDiffable}.

%We first note the invertibility of this approximate map.

 The approximate map $\widetilde K_{\theta,0}$ is in fact smooth in $\theta$ and its derivative can be understood quite easily. 
In preparation for proving this, we first need to show that these kinds of kernels behave well on $H^k(\partial B)$-spaces.

\subsection{Mapping properties of perturbed logarithmic kernels}

Our first goal will be to show that $\mathcal{K}_{\theta,0}$ is smooth in $\theta$ as a map from $H^{k-1}(\de B)$ to $H^k(\de B)$. This requires us to develop some machinery about the general mapping properties of such kernels.

The key result of this subsection is Proposition \ref{Kernelsmooth}.\newline

We first want to understand the kernel at $\theta=0$, for this it will be convenient to understand the action of the Newtonian potential operator $\tilde\K_{0,0}$ in terms of Fourier series. For this we will occasionally identify $\R^2$ with $\mathbb{C}$, so that $e^{i\alpha}$ parametrizes $\de B$ with $\alpha\in \T$.
 
 \begin{lemma}\label{K inv}
The convolution kernel $\log|x-y|$ acts on $L^2(\de B)$ as the Fourier multiplier $e^{ik\alpha}\rightarrow \frac{-\pi}{|k|}e^{ik\alpha}$ for $k\in \Z_{\neq 0}$ and maps $1$ to $0$, where we used the identification with $\mathbb{C}$. In particular, for $y\in \de B$ it holds that \begin{align}
&\int_{\de B}\log|\tilde{y}-y|\dd\tilde{y}=0\label{63},\\
&\int_{\de B}x_1\log|\tilde{y}-y|\dd \tilde{y}=-\pi y_1\label{64},
\end{align}
and the associated linear map maps $H^{k-1}(\de B)$ to $H^{k}(\de B)$ boundedly.
\end{lemma}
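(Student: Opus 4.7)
The plan is to compute the action of the convolution operator directly via Fourier series on the circle, using the standard generating identity
\[
\log|1-e^{i\beta}|=-\sum_{k=1}^{\infty}\frac{\cos(k\beta)}{k},
\]
which is well known (e.g.\ as the real part of $-\sum_{k=1}^\infty e^{ik\beta}/k = \log(1-e^{i\beta})$) and holds in $L^2(\T)$. Parametrizing $y=e^{i\alpha}$ and $\tilde y=e^{i\tilde\alpha}$, I would first observe the rotation invariance
\[
|e^{i\alpha}-e^{i\tilde\alpha}|=|1-e^{i(\alpha-\tilde\alpha)}|,
\]
so the kernel is a convolution kernel on $\T$ with Fourier coefficients
\[
\widehat{\log|1-e^{i\cdot}|}(k)=
\begin{cases}
-\frac{\pi}{|k|},& k\neq 0,\\
0,& k=0,
\end{cases}
\]
read off from the cosine series above (the $k=0$ case uses that $\int_0^{2\pi}\log|1-e^{i\beta}|\dd\beta=0$).

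From the convolution theorem on $\T$ this immediately gives the claimed Fourier multiplier action $e^{ik\alpha}\mapsto -\pi|k|^{-1}e^{ik\alpha}$ for $k\neq 0$ and $1\mapsto 0$. The two integral identities \eqref{63} and \eqref{64} then follow by applying the operator to $1$ and to $x_1=\cos\tilde\alpha=\tfrac12(e^{i\tilde\alpha}+e^{-i\tilde\alpha})$, respectively; in the latter case both modes $k=\pm 1$ contribute the multiplier $-\pi$, and recombining gives $-\pi\cos\alpha=-\pi y_1$.

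Finally, the mapping property $H^{k-1}(\partial B)\to H^k(\partial B)$ is a direct consequence of the $|k|^{-1}$ decay of the symbol together with the Fourier definition of $H^s(\partial B)$ given in the preliminaries. Indeed, for $f=\sum \hat f_n e^{in\alpha}\in H^{k-1}$, the image has Fourier coefficients bounded by $\pi|\hat f_n|/|n|$ for $n\neq 0$ and zero for $n=0$, and the elementary inequality $(1+|n|)^{2k}/|n|^{2}\lesssim (1+|n|)^{2(k-1)}$ for $|n|\geq 1$ yields the norm bound. The only subtle point is the $k=0$ mode, which is sent to $0$ and thus causes no difficulty. I do not anticipate a serious obstacle here: the proof is essentially a direct Fourier-series computation once the classical identity for $\log|1-e^{i\beta}|$ is invoked.
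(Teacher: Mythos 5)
Your proposal is correct and follows essentially the same route as the paper: both compute the Fourier coefficients of the rotation-invariant kernel $\log|1-e^{i\beta}|$ and read off the multiplier $-\pi/|k|$, with the $H^{k-1}\to H^k$ bound then immediate from the symbol decay and the Fourier definition of the norms. The only difference is that you cite the classical cosine-series identity for $\log|1-e^{i\beta}|$ as known, whereas the paper derives it by regularizing with $\log|1-(1-\delta)e^{i\alpha}|$, expanding the logarithm inside its radius of convergence, and passing to the limit by dominated convergence — a minor matter of how the same key identity is justified.
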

\begin{proof}
We identify $\R^2$ with $\mathbb{C}$. On the Fourier level, the convolution can be turned into a multiplication via a simple change of variables ($\gamma=\alpha-\beta$),
\begin{align*}
-2\pi  \widehat{\left(\widetilde{\K}_{0,0} f\right)}(k) & = \frac1{2\pi}\int_0^{2\pi} e^{-ik\alpha} \int_0^{2\pi} \log |e^{i\alpha}-e^{i\beta}| f(\beta)\dd \beta\dd\alpha\\
& =  \frac{1}{2\pi} \int_0^{2\pi} e^{-ik\beta}f(\beta) \dd \beta\int_0^{2\pi} e^{-ik\gamma}\log |e^{i\gamma}-1|\dd \gamma\\
& =   \hat f(k) \int_0^{2\pi} e^{- i k\gamma}\log |e^{i\gamma}-1|\dd \gamma,
\end{align*}
for any wave number $k\in \Z$. We thus need to show that 
\begin{equation}\label{eq2}
    \int_0^{2\pi}e^{ik\alpha}\log|e^{i\alpha}-1|\dd\alpha =\begin{cases}-\frac{\pi}{|k|}&\mbox{if }k\not=0,\\
    0&\mbox{if }k=0.
    \end{cases}
\end{equation}

\noindent  It will be useful to use the series expansion of the logarithm, which requires an approximation to stay in the radius of convergence, therefore we will show \begin{equation}\label{eq3}
\int_0^{2\pi}e^{ik\alpha}\log|1-(1-\delta)e^{i\alpha}|\da\overset{\delta\searrow 0}{\longrightarrow} \begin{cases}-\frac{\pi}{|k|}&\mbox{if }k\not=0,\\
    0&\mbox{if }k=0.
    \end{cases}
\end{equation}

\noindent This implies \eqref{eq2}, as we can exchange the limit and the integral by dominated convergence. Now by making use of the identity $\log|z|=\Re \log z$ and of the Taylor series of the logarithm around $1$, we see that  the left hand side of \eqref{eq3} equals\begin{align}
&\int_0^{2\pi}e^{ik\alpha}\log|1-(1-\delta)e^{i\alpha}|\da\\
&=\int_0^{2\pi}e^{ik\alpha}\Re(\log1-(1-\delta)e^{i\alpha})\da\\
&=-\int_0^{2\pi}e^{ik\alpha}\Re\left(\sum_{j=1}^\infty \frac{(1-\delta)^j}{j}e^{ij\alpha}\right)\da\\
&=-\sum_{j=1}^\infty\frac{(1-\delta)^j}{2j}\int_0^{2\pi}e^{ik\alpha}(e^{ij\alpha}+e^{-ij\alpha})\,\mathrm{d}\alpha
\end{align}

\noindent Here we used the uniform convergence of the series to exchange the sum and the integral. Now in this sum, only the summand for $j=\pm k$ is nonzero by orthogonality, which implies \eqref{eq3}.

The special case \eqref{64} holds because $\cos \alpha=y_1$.

\end{proof}

In order to see what kind of terms we will need to study for $\theta\neq 0$, we first expand the kernel.

\begin{lemma}\label{L3}
For any $y,\tilde y\in \partial B$, it holds that
\begin{equation}
\label{48} 
\begin{aligned}
\mel
\log |y(1+\theta(y)) - \tilde y(1+\theta(\tilde y))|\\
&= \log |y-\tilde y| + \frac12 \log \left( 1+\theta(y)+\theta(\tilde y) + \theta(y)\theta(\tilde y)  +\frac{(\theta(y)-\theta(\tilde y))^2}{|y-\tilde y|^2}\right).
\end{aligned}
 \end{equation}
\end{lemma}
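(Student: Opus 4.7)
The identity is purely algebraic, so the plan is a direct computation with no functional-analytic complications.

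The first step is to expand the squared modulus on the left-hand side. Writing $a := \theta(y)$ and $b := \theta(\tilde y)$ for brevity and using $|y|=|\tilde y|=1$, one has
\begin{equation}
|y(1+a)-\tilde y(1+b)|^2 = (1+a)^2 + (1+b)^2 - 2(1+a)(1+b)\, y\cdot\tilde y.
\end{equation}
The next step is to replace the inner product $y\cdot\tilde y$ by an expression involving $|y-\tilde y|$, using the polarization identity $2\, y\cdot\tilde y = |y|^2 + |\tilde y|^2 - |y-\tilde y|^2 = 2 - |y-\tilde y|^2$.

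Substituting this in and grouping the $|y-\tilde y|^2$ term separately from the rest, the remaining terms become $(1+a)^2 + (1+b)^2 - 2(1+a)(1+b) = ((1+a)-(1+b))^2 = (a-b)^2$. Hence
\begin{equation}
|y(1+a)-\tilde y(1+b)|^2 = (a-b)^2 + (1+a)(1+b)\,|y-\tilde y|^2,
\end{equation}
which, after factoring out $|y-\tilde y|^2$ and expanding $(1+a)(1+b) = 1 + a + b + ab$, reads
\begin{equation}
|y(1+a)-\tilde y(1+b)|^2 = |y-\tilde y|^2 \left(1 + a + b + ab + \frac{(a-b)^2}{|y-\tilde y|^2}\right).
\end{equation}

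Taking one half of the logarithm of both sides and re-substituting $a = \theta(y)$, $b = \theta(\tilde y)$ yields \eqref{48}. The only subtlety worth noting is that the factor in parentheses is strictly positive when $y\neq \tilde y$ (since $(1+a)(1+b) > 0$ under the smallness assumption $\|\theta\|_{L^\infty} \le 1/2$ from \eqref{62}), so the logarithm is well-defined; no genuine obstacle arises in the proof.
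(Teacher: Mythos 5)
Your computation is correct: expanding the squared modulus, substituting $2\,y\cdot\tilde y = 2-|y-\tilde y|^2$, and regrouping gives exactly $(a-b)^2+(1+a)(1+b)|y-\tilde y|^2$, from which \eqref{48} follows by taking half the logarithm. Your route is mildly different from the paper's: you work with the law-of-cosines expansion and eliminate the inner product via the polarization identity on the unit circle, whereas the paper first decomposes the vector $y(1+\theta(y))-\tilde y(1+\theta(\tilde y))$ into its components along $y-\tilde y$ and $y+\tilde y$, uses their orthogonality (Pythagoras) and then the parallelogram law $|y-\tilde y|^2+|y+\tilde y|^2=4$ to trade $|y+\tilde y|^2$ for $|y-\tilde y|^2$. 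The two arguments are equivalent elementary algebra; yours is arguably shorter and more mechanical, while the paper's decomposition makes visible the symmetric/antisymmetric structure of the perturbation (the $\theta(y)+\theta(\tilde y)$ part stretching along $y-\tilde y$ and the $\theta(y)-\theta(\tilde y)$ part along $y+\tilde y$), which is the structure exploited later when the difference quotient $\frac{(\theta(y)-\theta(\tilde y))^2}{|y-\tilde y|^2}$ is treated as a kernel argument. Your closing remark on positivity of the bracket under $\|\theta\|_{L^\infty}\le\frac12$ is a reasonable (and harmless) addition that the paper leaves implicit.
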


\begin{proof}
We start by observing the orthogonality condition $(y-\tilde y)\cdot (y+\tilde y)=0$ for any $y,\tilde y\in \partial B$, so that an application of the Pythagorean theorem yields 
\begin{align*}
\mel \left|y(1+\theta(y)) - \tilde y (1+\theta(\tilde y))\right|^2\\
 & = \left|  y-\tilde y  +  \frac12 (y-\tilde y)\left(\theta(y)+\theta(\tilde y))\right) +\frac12(y+\tilde y)(\theta(y)-\theta(\tilde y))\right|^2\\
 & = |y-\tilde y|^2 \left(1+\frac12(\theta(y)+\theta(\tilde y))\right)^2 +\frac14|y+\tilde y|^2 (\theta(y)-\theta(\tilde y))^2.
\end{align*}
We apply the parallelogram law $|y-\tilde y|^2 +|y+\tilde y|^2 = 4$ on the unit circle to rewrite this identity as
\begin{align*}
\mel \left|y(1+\theta(y)) - \tilde y (1+\theta(\tilde y))\right|^2\\
 & = |y-\tilde y|^2\left[ \left(1+\frac12(\theta(y)+\theta(\tilde y))\right)^2 -\frac14 (\theta(y)-\theta(\tilde y))^2\right]  + (\theta(y)-\theta(\tilde y))^2\\
 & = |y-\tilde y|^2 \bigl(1+\theta(y)+\theta(\tilde y) +\theta(y)\theta(\tilde y)\bigr) +(\theta(y)-\theta(\tilde y))^2.
\end{align*}
Therefore, applying the logarithm yields \eqref{48}.
 \end{proof}

This motivates that we should study kernels involving difference quotients of $\theta$.
 
 %The term correcting the Newtonian potential occuring in the expansion \eqref{48} makes it necessary to study the boundedness of generic smooth kernels that involve difference quotients. This will be the content of the following lemma.

\begin{lemma}\label{L4}
Let $k\ge 5$ and $\delta\in(0,1)$ be given, let $d \geq 1$ be an integer and let \begin{align} 
&r_1:B_{\delta}(0)^{\R^{4d}}\times [0,\delta]\times(\de B)^2\to \R\\
&r_2:B_{\delta}(0)^{\R^{3d}}\times [0,\delta]\times(\de B)^2\to \R
\end{align}

\noindent be smooth. Assume that $\tilde\theta\in H^k(\de B,\R^d)$ is such that $\norm{\tilde\theta}_{H^k(\de B, \R^d)}\leq \delta'$ with $\delta'\ll\delta$.
 Consider the kernels
\begin{align}
& R_1(y,\tilde y) := r_1\left(\tilde\theta(y),\tilde\theta(\tilde y), \frac{(\tilde\theta_1(y)-\tilde\theta_1(\tilde y))^2}{|y-\tilde y|^2}, \frac{(\tilde\theta_2(y)-\tilde\theta_2(\tilde y))^2}{|y-\tilde y|^2}, \dots ,\partial_{\tau}\tilde\theta(\tilde y),\eps,y,\tilde{y}\right),\quad\label{smoothKernel1}\\
& R_2(y,\tilde y) := r_2\bigl(\tilde\theta(y),\tilde\theta(\tilde y) ,\partial_{\tau}\tilde\theta(\tilde y),\eps,y,\tilde{y}\bigr)\log|y-\tilde{y}|,\label{smoothKernel2}
\end{align}
and denote by $\mathcal{R}_1$ and $\mathcal{R}_2$ the associated linear maps given by \begin{align} \mathcal{R}_{1/2} f(y)=\int_{\de B} R_{1/2}(y,\tilde{y}) f(\tilde{y})\,\mathrm{d}\tilde{y}.\end{align}
These maps are bounded from $H^{k-1}(\partial B)$ to $H^k(\partial B)$, and it holds
\begin{align}
&\|\mathcal{R}_1 f\|_{H^k} \lesssim_{\delta,\delta'}   \|f\|_{H^{k-1}}\norm{r_1}_{C^{k+2}},\\
&\|\mathcal{R}_2 f\|_{H^k} \lesssim_{\delta,\delta'}   \|f\|_{H^{k-1}}\norm{r_2}_{C^{k+2}},
\end{align}
for any $f\in H^{k-1}(\partial B)$, uniformly in $r_{1/2}$.
\end{lemma}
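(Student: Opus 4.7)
The plan is to estimate $\|\partial_\tau^k \mathcal{R}_i f\|_{L^2(\partial B)}$ by differentiating under the integral and controlling each resulting chain-rule term. Both kernels will be treated via a Taylor expansion around the diagonal $y = \tilde y$. For $\mathcal{R}_2$, I would parameterize $\partial B$ by the angle $\alpha \in \T$, write $s = \tilde\alpha - \alpha$, and expand the smooth coefficient $a(\alpha, \tilde\alpha) := r_2(\tilde\theta(y), \tilde\theta(\tilde y), \partial_\tau \tilde\theta(\tilde y), \eps, y, \tilde y)$ in $s$ to get
\[
a(\alpha, \tilde\alpha) = \sum_{j=0}^{N} c_j(\alpha)\, s^j + s^{N+1}\tilde a(\alpha, \tilde\alpha)
\]
for $N$ chosen depending on $k$. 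Each summand contributes an operator of the form $c_j(\alpha) \int s^j \log|e^{i\alpha} - e^{i\tilde\alpha}|\, f(\tilde\alpha)\, d\tilde\alpha$. The integral is a convolution on $\T$, and by Lemma \ref{K inv} together with integration by parts its Fourier multiplier decays at least as $|k|^{-1-j}$; it therefore maps $H^{k-1}$ boundedly into $H^{k+j}$. Multiplication by the smooth function $c_j$ then preserves $H^k$ with norm controlled by $\|r_2\|_{C^{k+2}}$ and $\|\tilde\theta\|_{H^k}$ via the composition rule (the two extra derivatives provide the margin needed by the product rule across all arguments of $r_2$). The smoother Taylor remainder is estimated directly.

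For $\mathcal{R}_1$, the kernel has no singularity, and one only needs to verify that $\partial_\tau^j R_1(y,\tilde y)$ lies in $L^\infty_y L^\infty_{\tilde y}$ for $j \leq k-1$ and in $L^2_y L^\infty_{\tilde y}$ for $j = k$, with norms controlled via Lemma \ref{FreLemma}(b) by $\|\tilde\theta\|_{H^k}$ and $\|r_1\|_{C^{k+2}}$. The delicate arguments of $r_1$ are the difference quotients $D_i(y,\tilde y) := (\tilde\theta_i(y) - \tilde\theta_i(\tilde y))/|y - \tilde y|$, which I would handle via the integrated representation $D_i(y, \tilde y) = \int_0^1 \partial_\tau \tilde\theta_i(\gamma_{y, \tilde y}(t))\, dt$ along the shorter arc $\gamma_{y, \tilde y}$ between $y$ and $\tilde y$; this exhibits $D_i$ as an average of tangential derivatives of $\tilde\theta_i$, whence $\partial_\tau^j D_i$ is an average of $\partial_\tau^{j+1} \tilde\theta_i$ with $L^2_y$-norm controlled by $\|\tilde\theta\|_{H^{j+1}}$. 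Fubini and the Cauchy--Schwarz inequality then give $\|\partial_\tau^k(\mathcal{R}_1 f)\|_{L^2_y} \lesssim \|f\|_{L^2} \leq \|f\|_{H^{k-1}}$.

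The main obstacle is the combinatorial bookkeeping: the chain rule applied to $R_i$ produces many terms, and for each one must decide which factors to estimate in $L^\infty$ (via Sobolev embedding, which is where the hypothesis $k \geq 5$ enters to guarantee enough embeddings $H^k \hookrightarrow C^{k-1}$) and which in $L^2$. The top-order contributions, where a derivative falls on $\partial_\tau^k \tilde\theta$ directly, dictate that the estimate on the output is $L^2$-based rather than $L^\infty$-based --- this is the reason one gains only a single derivative, despite the kernels being comparatively regular away from the diagonal. No essentially new analytic tool beyond Lemmas \ref{K inv} and \ref{FreLemma} is needed; the core task is careful tracking of which term is estimated in which norm, and maintaining uniformity in $r_i$.
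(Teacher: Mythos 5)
Your high-level plan of ``differentiate $k$ times under the integral and track which factor goes in $L^\infty$ and which in $L^2$'' misses the actual analytic difficulty of this lemma, and both of your key claims fail at top order. For $\mathcal{R}_1$: it is not true that $\partial_\tau^k R_1(y,\tilde y)$ lies in $L^2_y L^\infty_{\tilde y}$ (nor that $\partial_\tau^{k-1}R_1$ is bounded). When all $k$ derivatives fall on a difference-quotient argument, the chain rule produces pieces such as $(\tilde\theta_i(y)-\tilde\theta_i(\tilde y))\,\partial_\tau^k\tilde\theta_i(y)/|y-\tilde y|^2$, and after your integration by parts in the arc parameter the averaged representation produces $\bigl(\partial_\tau^k\tilde\theta_i(y)-k\int_0^1 t^{k-1}\partial_\tau^k\tilde\theta_i\,\mathrm{d}t\bigr)/|y-\tilde y|$: these kernels carry a non-integrable $1/|y-\tilde y|$ singularity multiplying an $L^2$ density, so neither Fubini nor Cauchy--Schwarz applies, and the claimed mixed-norm bounds are false. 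These are genuinely singular-integral objects; the paper handles them by splitting the regular factor as $g_1(y)+g_2(y,\tilde y)|y-\tilde y|$, by a reflection/symmetrization computation of the principal value $\int(\tilde\theta_i(y)-\tilde\theta_i(\tilde y))/|y-\tilde y|^2\,\mathrm{d}\tilde y$, and by the $L^2$-boundedness of Calder\'on commutators -- none of which your argument replaces. Relatedly, your scheme only ever uses $\|f\|_{L^2}$, i.e.\ it would prove an $L^2\to H^k$ bound; the statement needs $f\in H^{k-1}$, and that regularity must actually be spent by moving up to $k-1$ derivatives onto $f$. The paper does this through the commutator identity $\partial_\tau\mathcal{R}f-\mathcal{R}\partial_\tau f=\int(\partial_\tau+\tilde\partial_\tau)R\,f$, exploiting that $\partial_\tau+\tilde\partial_\tau$ annihilates $|y-\tilde y|$ (so the singular factors are never differentiated); your proposal contains no mechanism of this kind.

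The treatment of $\mathcal{R}_2$ has the same structural gap. The coefficient $a(\alpha,\tilde\alpha)$ is not smooth: it contains $\partial_\tau\tilde\theta(\tilde y)$, hence is only $C^{k-2}$-type jointly, so the Taylor coefficients $c_j$ contain $\partial_\tau^{j+1}\tilde\theta\in H^{k-1-j}$. Consequently ``multiplication by $c_j$ preserves $H^k$'' is false already for $j=0$: with $c_0\in H^{k-1}$ and $T_0f\in H^k$ the product is generically only in $H^{k-1}$, and indeed the frozen-coefficient terms $c_jT_jf$ need not individually lie in $H^k$ (take $r_2=\partial_\tau\tilde\theta_1(\tilde y)$: the full operator $T_0(\partial_\tau\tilde\theta_1 f)$ is in $H^k$, but $\partial_\tau\tilde\theta_1\cdot T_0f$ is not). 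The two extra derivatives in $\|r_2\|_{C^{k+2}}$ cannot compensate for the finite regularity of $\tilde\theta$ itself, and the high Taylor remainder is likewise not obviously better, since its coefficient has even less regularity in $\alpha$. So the term-by-term strategy cannot close without exactly the commutator cancellation described above; this, combined with the fact that the logarithm convolution gains one derivative (your Lemma \ref{K inv} input is the one ingredient you use correctly, and it is also how the paper absorbs the final derivative after integrating by parts in $\tilde y$), is the route the paper takes. In short: the combinatorics are not the main obstacle; the missing ideas are the symmetrized-derivative commutator structure and the Calder\'on-commutator/principal-value estimates for the top-order terms.
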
 

Here comes our key result of this subsection.

\begin{proposition}\label{Kernelsmooth}
Let $\mathcal{R}_1$ and $\mathcal{R}_2$ be as above and $k\geq 5$, then $(\tilde\theta,\eps)\rightarrow \mathcal{R}_1$ and $(\tilde\theta,\eps) \rightarrow \mathcal{R}_2$ are Fr\'echet smooth as maps from $H^{k}(\de B)\times \R$ to $C(H^{k-1}(\de B),H^k(\de B))$, locally around $0$ and their Fr\'echet derivatives are given through the pointwise derivatives of the kernels.

%Moreover the operatornorm of the $l$-th derivative is $\lesssim \norm{r_{1/2}}_{C^{k+l}}$ locally around $0$.
\end{proposition}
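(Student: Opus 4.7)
The strategy is to apply Lemma \ref{FreLemma} a) with $U = H^k(\de B) \times \R$, $X = C(H^{k-1}(\de B), H^k(\de B))$, and $Y = X$. The key structural observation is that the kernel class described by \eqref{smoothKernel1}--\eqref{smoothKernel2} is \emph{closed under pointwise differentiation in $(\tilde\theta,\eps)$}, possibly after enlarging the parameter dimension $d$. Combined with the mapping estimates of Lemma \ref{L4}, this closure will deliver Gateaux differentiability of every order together with the operator-norm bounds on each derivative, from which Fréchet smoothness follows.

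To verify the closure, I would compute the pointwise derivative of $R_1$ in a direction $(\delta\tilde\theta,\delta\eps)$. Variation of the slots $\tilde\theta(y)$, $\tilde\theta(\tilde y)$, $\de_\tau\tilde\theta(\tilde y)$, $\eps$ trivially produces a kernel of the form \eqref{smoothKernel1}, now depending on the enlarged tuple $(\tilde\theta,\delta\tilde\theta) \in \R^{2d}$ through a new smooth function. The only delicate slots are the squared-difference-quotient ones: the derivative of $(\tilde\theta_i(y)-\tilde\theta_i(\tilde y))^2/|y-\tilde y|^2$ in direction $\delta\tilde\theta_i$ equals
\begin{equation}
\frac{2(\tilde\theta_i(y)-\tilde\theta_i(\tilde y))(\delta\tilde\theta_i(y)-\delta\tilde\theta_i(\tilde y))}{|y-\tilde y|^2},
\end{equation}
which is not itself a squared quotient. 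The polarization identity $2ab=(a+b)^2-a^2-b^2$, however, rewrites this cross term as a smooth function of the three squared difference quotients built from the triple $(\tilde\theta_i,\,\delta\tilde\theta_i,\,\tilde\theta_i+\delta\tilde\theta_i)$. Enlarging the tuple to $(\tilde\theta,\delta\tilde\theta,\tilde\theta+\delta\tilde\theta)\in\R^{3d}$ therefore puts the derivative kernel back into the form \eqref{smoothKernel1}, with a new smooth function $r_1^{(1)}$ that is linear in the variation. The same argument applies to $R_2$, since the factor $\log|y-\tilde y|$ is untouched by differentiation in $(\tilde\theta,\eps)$.

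Iterating, every higher-order pointwise derivative of $R_1$, $R_2$ is again a kernel of the form \eqref{smoothKernel1}--\eqref{smoothKernel2}, multilinear in the variations, with a smooth function depending on a further enlarged tuple. Lemma \ref{L4} then bounds the corresponding operator in $C(H^{k-1},H^k)$, with the bound depending multilinearly on the variations through the $C^{k+2}$-norm of that smooth function (which is in turn controlled by a polynomial in $\|\delta\tilde\theta\|_{H^k}+|\delta\eps|$ on bounded sets). Gateaux differentiability of the operator-valued maps into $X$ is then immediate from dominated convergence inside the defining integral of $\mathcal{R}_{1/2}$, and the second Gateaux derivative is locally bounded from $U^2$ into $X$. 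Lemma \ref{FreLemma} a) upgrades this to continuous Fréchet differentiability, and iterating the same procedure on the higher derivatives yields smoothness. Unwinding the construction shows that the Fréchet derivative is the operator whose kernel is the pointwise derivative of $R_i$, as claimed. The sole nontrivial obstacle is the squared-difference-quotient slot, resolved by the polarization trick above.
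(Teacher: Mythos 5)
Your proposal is correct and follows essentially the same route as the paper: the closure of the kernel class under pointwise differentiation via the polarization identity with the enlarged tuple $(\tilde\theta,\delta\tilde\theta,\tilde\theta+\delta\tilde\theta)$ (so $d\mapsto 3d$), the operator bounds from Lemma \ref{L4} on every derivative, and the upgrade to Fr\'echet smoothness through Lemma \ref{FreLemma} a). The only point to tighten is your choice $Y=X=C(H^{k-1},H^k)$ in Lemma \ref{FreLemma} a): Gateaux differentiability in the operator norm of $C(H^{k-1},H^k)$ is not literally ``immediate from dominated convergence,'' and the cleaner move (taken in the paper) is to verify Gateaux differentiability in the weaker target $C(H^{k-1},L^2)$ — where dominated convergence does suffice since the kernels are $C^1$ in $(\tilde\theta,\eps)$ with values in $L^2_y$ and $H^{k-1}\hookrightarrow L^\infty$ — and let the embedding $C(H^{k-1},H^k)\hookrightarrow C(H^{k-1},L^2)$ together with the Lemma \ref{L4} bounds on the second derivative do the rest.
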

\begin{proof}[Proof of the proposition using Lemma \ref{L4}]
We want to show this by applying Lemma \ref{FreLemma} a) to each derivative of $\mathcal{R}_{1/2}$. It holds that $C(H^{k-1},H^k)\hookrightarrow C(H^{k-1},L^2)$ and we want to use this inclusion for the application of the lemma. 

Because $r_1$ and $r_2$ are smooth and their arguments are continuous by the Sobolev embedding $H^{k}\hookrightarrow C^1$, we see that the kernels $R_{1/2}(y,\tilde{y})$ are $C_{\tilde\theta,\eps}^1(L_y^2)$, uniformly in $\tilde{y}$. 
Therefore, we see that $\mathcal{R}_{1/2}$ is Gateux-differentiable as map to $C(H^{k-1},L^2)$, since $H^{k-1}\hookrightarrow L^\infty$, and the derivative is the same as the pointwise one.\newline

The crucial observation is now that every derivative of $\mathcal{R}_{1/2}$ has the same structure \eqref{smoothKernel1} resp.\ \eqref{smoothKernel2} with a different smooth $r_{1/2}$ after changing the role of $\tilde\theta$. Indeed, for the derivative with respect to $\eps$ this is clear. The derivative with respect to $\theta$ splits into various partial derivatives of $r_{1/2}$ by the chain rule. For all partial derivatives, the structure is clear, except when the derivative is applied to a difference quotient, in which case we write \begin{align}
&\de_i r_{1/2}(\tilde\theta(y),\dots)\frac{\scalar{\tilde\theta_j(y)-\tilde\theta_j(\tilde{y})}{\delta\tilde\theta_j(y)-\delta\tilde\theta_j(\tilde{y})}}{|y-\tilde{y}|^2}\\
&=\de_i r_{1/2}(\tilde\theta(y),\dots)\\
&\quad\times\frac{|(\tilde\theta_j+\delta\tilde\theta_j)(y)-(\tilde\theta_j+\delta\tilde\theta_j)(\tilde{y})|^2-|\tilde\theta_j(y)-\tilde\theta_j(\tilde{y})|^2-|\delta\tilde\theta_j(y)-\delta\tilde\theta_j(\tilde{y})|^2}{2|y-\tilde{y}|^2}.\nonumber
\end{align}
This is of the structure \eqref{smoothKernel1} resp.\ \eqref{smoothKernel2} after replacing $d$ with $3d$ and $\theta$ with $(\tilde\theta,\tilde\delta\theta,\tilde\theta+\tilde\delta\theta)$. Since we can iterate this argument, it follows that indeed all derivatives have this structure.

In particular, via iteration we see that $\mathcal{R}_{1/2}$ have all Gateaux derivatives as $C(H^{k-1},L^2)$ valued maps.
All their derivatives are of the form \eqref{smoothKernel1} or \eqref{smoothKernel2}, respectively, and thus, thanks to Lemma \ref{L4},  these are  bounded in $C(H^{k-1},H^k)$. Therefore, Lemma \ref{FreLemma}  applied to every derivative of $\mathcal{R}_{1/2}$ yields the statement.
\end{proof}

\begin{proof}[Proof of Lemma \ref{L4}]
The proofs for the two kernels are very similar, we will therefore do them simultaneously.

First observe that by the Sobolev embedding $H^{k} \hookrightarrow C^1 $ and the condition $\norm{\tilde\theta}_{H^k} \ll \delta$, the expressions \eqref{smoothKernel1} and \eqref{smoothKernel2} are well-defined.
We further note that $\tilde\theta\in C^{k-1} $ by the Sobolev embedding and by the fundamental theorem of calculus $\frac{(\tilde\theta(y)-\tilde\theta(\tilde y))^2}{|y-\tilde y|^2}\in C^{k-2}(\partial B)^2$.  Hence, 
\begin{align}\label{rCk-2}
\left\{(y,\tilde y)\mapsto 
r_{1/2}\left(\tilde\theta(y),\tilde\theta(\tilde y), \frac{(\tilde\theta_1(y)-\tilde\theta_1(\tilde y))^2}{|y-\tilde y|^2},\dots ,\partial_{\tau}\tilde\theta(\tilde y),\eps,y,\tilde{y}\right)\right\}\in C^{k-2}((\partial B)^2),\qquad
\end{align}
It follows by   Schur's Lemma (see e.g.\ \cite[Appendix A]{Grafakos2})
that the map $\mathcal{R}_{1/2}$ is bounded from $L^2 $ to $L^2 $, and furthermore, it is continuous with respect to  $\tilde\theta\in H^k $ in that topology. In particular, we can assume that $\tilde\theta$ is smooth; the general statement then follows from smooth approximation and the lower semicontinuity of the $H^k$-norm with respect to $L^2$ convergence.

To estimate the $H^{k}$-norm it suffices to estimate $\norm{\partial_{\tau}^k \mathcal{R}_{1/2} f}_{L^2}$ as we already control the $L^2$-norm. For the derivatives, we note the following commutator rule, 
\begin{align*}
\partial_{\tau} \mathcal{R}_{1/2} f (y) - \mathcal{R}_{1/2}\partial_{\tau} f(y) = 
  \int_{\partial B}\left(\partial_{\tau}+\tilde \partial_{\tau}\right) R_{1/2}(y,\tilde y) f(\tilde y)\dd \Ha^1(\tilde y),
\end{align*}
where $\tilde \partial_{\tau}$ denotes the tangential derivative with respect to $\tilde y$, which follows via a simple integration   by parts. Setting $R_{1/2}^{(j)} = (\partial_{\tau} + \tilde \partial_{\tau})^j R_{1/2}$ and denoting by $\mathcal{R}_{1/2}^{(j)}$ the associated linear operator, we thus find via iteration that
\begin{equation}\label{53}
\begin{aligned}
 \|\partial_{\tau}^{k}( \mathcal{R}_{1/2} f)\|_{L^2}\lesssim \norm{\de_\tau(\mathcal{R}_{1/2}\de_\tau^{k-1} f) }_{L^2}+\sum_{j=1}^{k}\norm{\mathcal{R}_{1/2}^{j}\de_\tau^{k-j} f}_{L^2}.
\end{aligned}
\end{equation}
We distinguish the cases $j\leq k-2$ and $j=k-1,k$ in the following.

One crucial observation is that the derivative $\de_\tau+\tilde{\de}_\tau$ is $0$ on the difference $y-\tilde{y}$ and hence it does not act on the logarithm in $R_2$ but only on the prefactor $r_{2}$. Hence by \eqref{rCk-2}, we see that for $j\leq k-2$, the kernel $R_{1/2}^j$ is  an $L^\infty$ function, possibly multiplied by $\log|y-\tilde y|$. Hence the corresponding summands are controlled by Schur's Lemma.

% Note that the derivative $\de_\tau+\tilde{\de}_\tau$ is $0$ on the difference $y-\tilde{y}$. In particular it holds that \begin{align}
%\mathcal{R}_2^{\ell}=(\de_\tau+\tilde{\de}_{\tau})\left(r\left(\theta(y),\theta(\tilde y), \frac{(\theta(y)-\theta(\tilde y))^2}{|y-\tilde y|^2} ,\partial_{\tau}\theta(\tilde y),\eps,y,\tilde{y}\right)}\right)\log|x-y|
% \end{align}

The summands for $j=k-1,k$ need to be handled more carefully. %Note that the distances $|y- \tilde{y}|$ are invariant under the differential operator $\de_\tau+\tilde{\de}_\tau$, which implies that the difference quotients remain difference quotients under that differentiation and that the derivatives do not act on the logarithm.

The kernels $R_{1/2}^{k-1}$ and $R_{1/2}^{k}$ can be computed with the help of the product and chain rule, yielding for instance
\begin{align}
R_{1}^{j}(y,\tilde{y})=&\sum_{(J,N)\in I_j}C_{J,N}\mathrm{D}^{J}r_1
(\dots)%\left(\tilde\theta(y),\tilde\theta(\tilde y), \frac{(\tilde\theta(y)-\tilde\theta(\tilde y))^2}{|y-\tilde y|^2} ,\partial_{\tau}\tilde\theta(\tilde y),\eps,y,\tilde{y}\right)
\\
&\times\prod_{(l_1,\dots l_4,m_1,\dots m_4,i_1,\dots i_4)\in N}\left(\de_\tau^{m_{1}}\tilde\theta_{i_1}(y)\right)^{l_1}\times\left(\tilde{\de}_\tau^{m_2}\tilde\theta_{i_2}(\tilde{y})\right)^{l_2}\nonumber\\
&\times\left((\de_\tau+\tilde{\de}_\tau)^{m_{3}}\left(\frac{(\tilde\theta_{i_3}(y)-\tilde\theta_{i_3}(\tilde y))^2}{|y-\tilde y|^2}\right)\right)^{l_3}\times\left(\tilde{\de}_\tau^{m_4+1}\tilde\theta_{i_4}(\tilde{y})\right)^{l_4},\nonumber
\end{align}

\noindent where $C_{J,N}\in \R$ is a prefactor coming from the product rule and the indexing set $I_j$ is some complicated subset of $\{0,\dots,j\}^6\times (\{0,\dots, j\}^{8}\times \{1,\dots,d\}^4)$ such that for all indices in $N$ it holds that $ l_1m_{1}+l_2m_{2}+l_3m_{3}+l_4m_{4}\leq j$ (as there are only $j$ derivatives involved). Besides this property, the structure of $I_j$ is irrelevant for us. The expression for $R_2^j(y,\tilde y)$ is similar but simpler because the difference quotient terms are absent.

Using the triangle inequality, we estimate all summands separately. 
As long as all the derivatives in a summand are all in $L^\infty$, the corresponding summand of the kernel is $L^2$-bounded by the same argument as above. Because $H^1 \hookrightarrow C^0 $, this is precisely the case if $m_1,m_2\le k-1$ and  $m_3,m_4 \le k-2$. For the other summands, where this is not the case, there is one singular factor and there are at most $2$ derivatives on the other factors and as a consequence of $k\geq 5$ and $H^3\hookrightarrow C^2$, all other factors are $C^2$.
We are thus left with the study of the following types of expressions, whose $L^2$-boundedness in $y$ we have to show:
 %
% The only summands not covered by this are those where at least $k-2$ derivatives are on the same factor, which in particular implies that the remaining factors in that summand are $C^2$ since $k\geq 5$ and $H^{k-2}\hookrightarrow C^2$. \smallskip
 %
% 
\begin{align}
\text{I}& :=\int_{\de B} g(y,\tilde{y})\frac{(\de_\tau+\tilde{\de}_\tau)^l(\tilde\theta_i(y)-\tilde\theta_i(\tilde y))^2}{|y-\tilde y|^2} \,\mathrm{d}\tilde{y},\\
\text{II}&:= \int_{\de B} g(y,\tilde{y})\tilde{\de}_\tau^{l+1}\tilde\theta_i(\tilde y) \,\mathrm{d}\tilde{y} ,\\
\text{III}&:=\int_{\de B} g(y,\tilde{y})\tilde{\de}_\tau^{l+1}\tilde\theta_i(\tilde y)\log|y-\tilde{y}| \,\mathrm{d}\tilde{y},\\
\text{IV}&:= \int_{\de B} g(y,\tilde{y})\de_\tau^{k}\tilde\theta_i( y) \,\mathrm{d}\tilde{y},\\
\text{V}&:=\int_{\de B} g(y,\tilde{y})\de_\tau^{k}\tilde\theta_i( y)\log|y-\tilde{y}| \,\mathrm{d}\tilde{y}.
\end{align}

\noindent Here $g$ is a placeholder for a $C^2(\de B\times \de B)$ function, which is given through the product of the remaining factors in the above expression for $R_1^j(y,\tilde y)$ (or, analogously, $R_2^j(y,\tilde y)$) with $f(\tilde y)$ (if $j=k$) or $\partial_{\tau} f(\tilde y)$ (if $j=k-1$), and $l\in\{k-1,k\}$.

%of the argument $f\in H^{k-1}$ and the regular factors of the kernel, and whose $C^2$ norm is bounded in terms of $\theta$,$\norm{f}_{H^{k-1}}$ and $\norm{r_{1/2}}_{C^{k+2}}$ and $l\in \{k-2,k-1,k\}$.\medskip

The integrals $\text{IV}$ and $\text{V}$ are trivially bounded by pulling $\de_\tau^{k}\tilde\theta_i(y)\in L^2(\partial B)$ out of the integral.
 The integral $\text{II}$ is bounded, because after an integration by parts,
 \[
 \text{II} = -\int_{\partial B} \tilde \partial_{\tau} g(y,\tilde y)\partial_{\tau}^l \tilde \theta_i(\tilde y)\dd\tilde y,
 \]
which is uniformly controlled by $  \norm{\nabla g}_{L^\infty}\norm{\theta}_{H^k}$. 
The remaining two integrals require more attention.

Using the product rule the integral $\text{I}$ can be  decomposed into a sum of terms of the type \begin{align}
\int_{\de B}g(y,\tilde{y})\frac{(\de_\tau^{l-m}\tilde\theta_i(y)-\tilde{\de}_{\tau}^{l-m}\tilde\theta_i(\tilde y))(\de_\tau^{m}\tilde\theta_i(y)-\tilde{\de}_{\tau}^{m}\tilde\theta_i(\tilde y))}{|y-\tilde y|^2}\,\mathrm{d}\tilde{y},\label{int I}
\end{align}
for $m=0,\dots l$. If $m,l-m\leq k-2$ then $\de_\tau^{l-m}\tilde\theta$ and $\de_\tau^{m}\tilde\theta$ are Lipschitz and the kernel is pointwise bounded and hence also $L^2$-bounded. Of the remaining cases we only consider $m=k$, the others are symmetric or easier. 

We may split \begin{align}
g(y,\tilde{y})=g_1(y)+g_2(y,\tilde{y})|y-\tilde{y}| \quad\text{and}\quad g(y,\tilde{y})=\tilde{g}_1(\tilde{y})+\tilde{g}_2(y,\tilde{y})|y-\tilde{y}|\label{split g}
\end{align}
with $g_1,\tilde{g}_1,g_2,\tilde{g}_2$ bounded, which is possible by using a Taylor expansion around $y=\tilde{y}$. This leaves us with \begin{align}
&\int_{\de B}g(y,\tilde{y})\frac{(\tilde\theta_i(y)-\tilde\theta_i(\tilde y))(\de_\tau^{k}\tilde\theta_i(y)-\tilde{\de}_{\tau}^{k}\tilde\theta_i(\tilde y))}{|y-\tilde y|^2}\,\mathrm{d}\tilde{y}\\
&=-\underbrace{\int_{\de B} (\tilde{g}_1\tilde{\de}_\tau^k\tilde\theta_i)(\tilde{y})\frac{\tilde\theta_i(y)-\tilde\theta_i(\tilde{y})}{|y-\tilde{y}|^2}\,\mathrm{d}\tilde{y}}_{=:\text{I}_1}+\underbrace{g_1(y)\de_\tau^k\tilde\theta_i(y)\int_{\de B}\frac{\tilde\theta_i(y)-\tilde\theta_i(\tilde{y})}{|y-\tilde{y}|^2}\,\mathrm{d}\tilde{y}}_{=:\text{I}_2}\\
&\quad-\underbrace{\int_{\de B}\tilde{\de}_\tau^k\tilde\theta_i(\tilde{y})\tilde{g}_2(y,\tilde{y})\frac{\tilde\theta_i(y)-\tilde\theta_i(\tilde{y})}{|y-\tilde{y}|}\,\mathrm{d}\tilde{y}}_{=:\text{I}_3}+\underbrace{\de_\tau^k\tilde\theta_i(y)\int_{\de B}g_2(y,\tilde{y})\frac{\tilde\theta_i(y)-\tilde\theta_i(\tilde{y})}{|y-\tilde{y}|}\,\mathrm{d}\tilde{y}}_{=:\text{I}_4}.\nonumber
\end{align}

\noindent  The integrand in $\text{I}_4$ is uniformly bounded since $\tilde\theta_i$ is Lipschitz and hence the term is bounded in $L^2$. The third term $\text{I}_3$ is bounded uniformly in $y$ because  $\tilde\theta_i$ is Lipschitz and $\partial_{\tau}^k \tilde \theta_i \in L^2(\partial B)$. For the first term $\text{I}_1$ we use that singular integral kernels of the form $\frac{A(y)-A(\tilde{y})}{|y-\tilde{y}|^2}$ with $A$ Lipschitz are known as Calder\'on commutators and are $L^2$ bounded, see e.g.\ Example 4.3.8 in \cite{Grafakos2}. Hence $\text{I}_1$ is in $L^2$.
For the integral in $\text{I}_2$, we let $\tilde{y}'\in \de B$ be the reflection of $\tilde{y}$ on $y$, so that $|y-\tilde y|=|y-\tilde y'|$. Then it holds that \begin{align}
 \int_{\de B}\frac{\tilde\theta_i(y)-\tilde\theta_i(\tilde{y})}{|y-\tilde{y}|^2}\,\mathrm{d}\tilde{y} =\frac12  \int_{\de B}\frac{\tilde\theta_i(y)-\tilde\theta_i(\tilde{y})}{|y-\tilde{y}|^2}\,\mathrm{d}\tilde{y} + \frac12 \int_{\de B}\frac{\tilde\theta_i(y)-\tilde\theta_i(\tilde{y}')}{|y-\tilde{y}|^2}\,\mathrm{d}\tilde{y}
\end{align}
We use a Taylor expansion in each integrand to the effect that
\[
 \int_{\de B}\frac{\tilde\theta_i(y)-\tilde\theta_i(\tilde{y})}{|y-\tilde{y}|^2}\,\mathrm{d}\tilde{y} = \grad\tilde \theta_i(y) \cdot \int_{\partial B} \frac{y-\frac12(\tilde y +\tilde y')}{|y-\tilde y|^2}\dd \tilde y +O(\|\tilde \theta_i\|_{C^2}).
\]
Upon a rotation, we may assume that $y=e_1$. In this case, the center point $\frac12(\tilde y+\tilde y')$ is the projection of $\tilde y$ onto the $x_1$ axis and thus, rewriting the integral in polar coordinates, we find
\[
 \int_{\partial B} \frac{y-\frac12(\tilde y +\tilde y')}{|y-\tilde y|^2}\dd \tilde y  = e_1 \int_0^{2\pi} \frac{1-\cos \alpha}{2(1-\cos{\alpha})}\dd \alpha = \pi e_1.
\]
We conclude that the integral in $\text{I}_2$ is bounded uniformly in terms of the second derivative of $\|\tilde \theta_i\|_{C^2}$, and hence, the term $I_2$ is bounded in $L^2$.

We finally turn to the integral $\text{III}$, which we consider for $l=k$ only. We may    integrate by parts and obtain \begin{align}
\text{III}=-\int_{\de B} \tilde{\de}_\tau^{k}\tilde\theta_i(\tilde y)g(y,\tilde{y})\tilde{\de}_\tau\log|y-\tilde{y}| \,\mathrm{d}\tilde{y}-\int_{\de B} \tilde{\de}_\tau g(y,\tilde{y})\tilde{\de}_\tau^{k}\tilde\theta_i(\tilde y)\log|y-\tilde{y}| \,\mathrm{d}\tilde{y}.
\end{align}

\noindent The second summand is controlled by $ \norm{\nabla g}_{L^\infty}\norm{\theta}_{H^k}$ uniformly in $y$ by the Cauchy--Schwarz inequality as the logarithm is in $L^2$.
For the first summand we again use $\tilde{g}_1,\tilde{g}_2$ as above in \eqref{split g} and obtain that 
\begin{equation}\label{log int}
\begin{aligned}
&\int_{\de B} \tilde{\de}_\tau^{k}\tilde\theta_i(\tilde y)g(y,\tilde{y})\tilde{\de}_\tau\log|y-\tilde{y}| \,\mathrm{d}\tilde{y}\\
&=\int_{\de B} \tilde{\de}_\tau^{k}\tilde\theta_i(\tilde y)\tilde{g}_1(\tilde{y})\tilde{\de}_\tau\log|y-\tilde{y}| \,\mathrm{d}\tilde{y}+\int_{\de B} \tilde{\de}_\tau^{k}\tilde\theta_i(\tilde y)\tilde{g}_2(y,\tilde{y})|y-\tilde{y}|\tilde{\de}_\tau\log|y-\tilde{y}| \,\mathrm{d}\tilde{y}.
\end{aligned}
\end{equation}
 We rewrite the first term \begin{align}
&\int_{\de B} \tilde{\de}_\tau^{k}\tilde\theta_i(\tilde y)\tilde{g}_1(\tilde{y})\tilde{\de}_\tau\log|y-\tilde{y}| \,\mathrm{d}\tilde{y}=-\de_\tau \int_{\de B} \tilde{\de}_\tau^{k}\tilde\theta_i(\tilde y)\tilde{g}_1(\tilde{y})\log|y-\tilde{y}| \,\mathrm{d}\tilde{y},
\end{align}

\noindent and apply Lemma \ref{K inv} below to observe that this is in $L^2$.
The other term  is bounded uniformly in $\tilde{y}$ by the Cauchy--Schwarz inequality because  $|y-\tilde{y}|\tilde{\de}_\tau\log|y-\tilde{y}|$ is bounded.

To finish the proof it remains to estimate the terms \begin{align}
\norm{\de_\tau\mathcal{R}_{1}\de_\tau^{k-1} f}_{L^2}\quad \text{and}\quad \norm{\de_\tau\mathcal{R}_{2}\de_\tau^{k-1} f}_{L^2}.
\end{align}

\noindent In the case of $\mathcal{R}_1$ this is done via the Cauchy--Schwarz inequality and a uniform pointwise bound because the kernel $\de_\tau R_1$ is uniformly pointwise bounded as a consequence of \eqref{rCk-2}.
In the case of $\mathcal{R}_2$, we have
\begin{align}
\de_\tau\mathcal{R}_{2}\de_\tau^{k-1} f=\de_\tau\int\tilde\de_\tau^{k-1} f(\tilde{y}) r_2(\dots)\log|x-y|\,\mathrm{d}\tilde{y}.
\end{align}
Thanks to \eqref{rCk-2} the factor $r_2$ is in $C^{k-2}$ and one can treat this integral term in the same way as we have treated the first summand of $\text{III}$ in \eqref{log int}.
\end{proof}

\subsection{The limiting map $\widetilde{\mathcal{K}}_{\theta,0}$ and the approximate solution $\tilde{\mu}_{\theta,0}$} \label{SS1}
We first want to understand the behavior of the limiting kernel $\widetilde{\mathcal{K}}_{\theta,0}$, which is smooth (Lemma \ref{L7}), easy to understand in terms of Fourier series at $\theta=0$ (Lemma \ref{K inv}) and invertible modulo constants (Lemma \ref{L10}). %In particular, the approximate

\begin{lemma}\label{L7}
The linear map $\widetilde \K_{\theta,0}: H^{k-1}(\partial B)\to H^k(\partial B)$ is  Fr\'echet smooth in $\theta\in \linebreak H^k(\partial B)$, provided that $\|\theta\|_{H^k}$ is sufficiently small. Moreover, for all $f\in H^{k-1}(\de B)$, it holds that
\[
\la \left. \mathrm{D}_{\theta}\right|_{\theta=0} \widetilde \K_{\theta,0} f, \delta \theta\ra  =\tilde{\mathcal{K}}_{0,0} ( \delta\theta\,  f) -\frac{\delta \theta}{4\pi}\int_{\partial B} f\, \mathrm{d}\Ha^1 -\frac1{4\pi}\int_{\partial B} f\delta \theta\, \mathrm{d}\Ha^1.
\]
\end{lemma}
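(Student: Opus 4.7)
The plan is to split $\widetilde K_{\theta,0}$ via the factorization in Lemma \ref{L3} into kernels that fit into the framework of Proposition \ref{Kernelsmooth}, and then to compute the derivative at $\theta=0$ by differentiating pointwise under the integral sign.

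\emph{First step.} Applying Lemma \ref{L3} to the definition of $\widetilde K_{\theta,0}$ gives the decomposition
\begin{align}
\widetilde K_{\theta,0}(y,\tilde y)
&= -\frac{m_\theta(\tilde y)}{2\pi}\log|y-\tilde y| \\
&\quad -\frac{m_\theta(\tilde y)}{4\pi}\log\!\Bigl(1+\theta(y)+\theta(\tilde y)+\theta(y)\theta(\tilde y)+\tfrac{(\theta(y)-\theta(\tilde y))^2}{|y-\tilde y|^2}\Bigr).
\end{align}
Recalling from \eqref{20} that $m_\theta$ is a smooth function of $\theta(\tilde y)$ and $\partial_\tau\theta(\tilde y)$, the first summand is a kernel of type $R_2$ in the notation of Lemma \ref{L4}. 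For $\|\theta\|_{H^k}$ small enough, the argument of the outer logarithm in the second summand lies in a compact subset of $(0,\infty)$, and hence the second summand is a kernel of type $R_1$ (with the difference-quotient-squared entry corresponding to the $d=1$ slot).

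\emph{Second step.} Proposition \ref{Kernelsmooth} therefore applies to each summand separately and yields that $\theta\mapsto\widetilde{\mathcal{K}}_{\theta,0}$ is Fr\'echet smooth from a neighborhood of $0\in H^k(\partial B)$ into $C(H^{k-1}(\partial B),H^k(\partial B))$, with Fr\'echet derivatives equal to the pointwise derivatives of the kernel (realized as operators).

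\emph{Third step.} To identify $\mathrm{D}_\theta|_{\theta=0}\widetilde{\mathcal{K}}_{\theta,0}$, we differentiate the kernel pointwise at $\theta=0$. Since $m_0\equiv 1$ and the argument of the outer logarithm vanishes at $\theta=0$, combined with the derivative $\la \mathrm{D}_\theta|_{\theta=0}m_\theta,\delta\theta\ra=\delta\theta$ from Lemma \ref{DeriNormal}, we obtain
\begin{align}
\la \mathrm{D}_\theta|_{\theta=0}\widetilde K_{\theta,0}(y,\tilde y),\delta\theta\ra
= -\frac{\delta\theta(\tilde y)}{2\pi}\log|y-\tilde y| - \frac{1}{4\pi}\bigl(\delta\theta(y)+\delta\theta(\tilde y)\bigr).
\end{align}
Crucially, the difference-quotient term $\tfrac{(\theta(y)-\theta(\tilde y))^2}{|y-\tilde y|^2}$ is quadratic in $\theta$ and therefore contributes nothing at first order. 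Integrating against $f(\tilde y)$ and recognizing the resulting logarithmic convolution as $\widetilde{\mathcal{K}}_{0,0}(\delta\theta\, f)$ yields exactly the formula claimed in the lemma.

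\emph{Main obstacle.} The non-trivial content lies entirely in the mapping property of the kernels: one must know that the pointwise differentiation just performed truly represents the Fr\'echet derivative as an operator on $H^{k-1}$. This has, however, been reduced to Proposition \ref{Kernelsmooth} already, and the present lemma is thus essentially a bookkeeping exercise that organizes the two summands and reads off the first-order term from the two obvious sources, namely $\partial_\theta m_\theta$ and the linearization of the inner logarithm.
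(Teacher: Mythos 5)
Your proposal is correct and follows essentially the same route as the paper: expand the kernel via Lemma \ref{L3}, invoke Proposition \ref{Kernelsmooth} to get Fr\'echet smoothness with derivatives given by pointwise differentiation of the kernel, and read off the first-order term from $\mathrm{D}_\theta m_\theta$ and the linearization of the inner logarithm. (Only a wording slip: at $\theta=0$ the argument of the outer logarithm equals $1$, so it is the logarithm itself that vanishes, which is what your computation in fact uses.)
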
 
For the convenience of the reader, we recall that in the statement of the lemma, 
$ \tilde{\mathcal{K}}_{0,0}$ is the operator associated with the Newtonian potential $  - \frac1{2\pi}\log |y-\tilde y|$.
We remark that the derivative $\left. \mathrm{D}_{\theta}\right|_{\theta=0} \widetilde \K_{\theta,0}$ has a nontrivial kernel which reflects the translation invariance of the limiting problem. Indeed, using \eqref{64} below we immediately see that the derivative vanishes for $f=\const $ and $\theta(x)=y_1$.

\begin{proof}
We use the expansion in Lemma \ref{L3} to note that $\widetilde K_{\theta,0}$ depends smoothly on $\theta$, that is
\begin{align}
\mel
\widetilde K_{\theta,0}(y,\tilde y)\\
&=-\frac{m_\theta(\tilde{y})}{2\pi}\biggl(\log|y-\tilde{y}|+\frac12\log\left(1+\theta(y)+\theta(\tilde y) +\theta(y)\theta(\tilde y)  +\frac{(\theta(y)-\theta(\tilde y))^2}{|y-\tilde y|^2}\right)\biggr).
\end{align}

\noindent  By the definition of $m_\theta$, everything except $\log|y-\tilde{y}|$ is smooth in $\theta$, locally around $0$, hence Proposition \ref{Kernelsmooth} is applicable and yields the smoothness. The formula for the first derivative at $\theta=0$ can be read off easily from the above expansion.
\end{proof}

The following Lemma follows directly from Lemma \ref{K inv}. 
\begin{lemma}\label{L6}
For any   $g\in H^k(\partial B)$, the problem  
\begin{align}\label{57}
 \tilde{\mathcal{K}}_{0,0}f - g &= \const\quad\text{ on $\de B$},\\
\int_{\partial B} f\, \mathrm{d}\Ha^1 &=0,
\end{align}
has a unique solution $f\in H^{k-1}(\partial B)$. Moreover, it holds that 
\[
\int_{\partial B} \tilde{\mathcal{K}}_{0,0}f\dd\Ha^1 = 0,
\]
and 
\begin{equation}
\label{59}
\|f\|_{H^{k-1}} \lesssim \|g\|_{H^k}\lesssim \|f\|_{H^{k-1}}.
\end{equation}
\end{lemma}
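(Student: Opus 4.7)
The plan is to diagonalize the equation via Fourier series on $\partial B \simeq \T$ using Lemma \ref{K inv}. That lemma identifies $\tilde{\mathcal{K}}_{0,0}$, the convolution operator with kernel $-\tfrac{1}{2\pi}\log|y-\tilde y|$, as the Fourier multiplier $e^{il\alpha}\mapsto \tfrac{1}{2|l|}e^{il\alpha}$ for $l\neq 0$, while annihilating the constant mode. The integral constraint $\int_{\partial B} f\,\mathrm{d}\mathcal H^1 = 0$ will then exactly compensate this one-dimensional kernel and produce uniqueness.

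Expanding $f = \sum_l \hat f_l\, e^{il\alpha}$ and $g = \sum_l \hat g_l\, e^{il\alpha}$, the equation $\tilde{\mathcal{K}}_{0,0}f - g = \const$ splits mode by mode: for $l\neq 0$ it forces $\hat f_l = 2|l|\hat g_l$, while the mode $l=0$ merely fixes the free constant via $\const = -\hat g_0$. The normalization $\hat f_0 = 0$ then yields the unique solution
\[
f = \sum_{l\neq 0} 2|l|\,\hat g_l\, e^{il\alpha}.
\]
By construction the zero Fourier mode of $\tilde{\mathcal{K}}_{0,0}f$ vanishes, which is precisely the statement $\int_{\partial B}\tilde{\mathcal{K}}_{0,0}f\,\mathrm{d}\mathcal H^1 = 0$.

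The norm estimate \eqref{59} is a pure weight comparison on the Fourier side. Since $4l^2(1+|l|)^{2(k-1)} \lesssim (1+|l|)^{2k} \lesssim 4l^2(1+|l|)^{2(k-1)}$ for $l\neq 0$, Parseval gives
\[
\|f\|_{H^{k-1}}^2 = \sum_{l\neq 0}(1+|l|)^{2(k-1)}\cdot 4l^2\,|\hat g_l|^2 \,\sim\, \sum_{l\neq 0}(1+|l|)^{2k}\,|\hat g_l|^2,
\]
which gives the upper bound $\|f\|_{H^{k-1}}\lesssim \|g\|_{H^k}$ at once. The reverse bound $\|g\|_{H^k}\lesssim \|f\|_{H^{k-1}}$ is to be read modulo the zero mode of $g$, since that mode is absorbed into the additive constant on the right-hand side and hence is invisible to $f$; equivalently, one normalizes the constant so that $\hat g_0 = 0$ (or replaces $g$ by $g+\const$) before applying the estimate. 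The only subtlety in the whole argument is this cosmetic bookkeeping about the one-dimensional kernel of $\tilde{\mathcal{K}}_{0,0}$; beyond that, the result is an immediate consequence of the Fourier symbol $\tfrac{1}{2|l|}$ being a smoothing of order exactly one.
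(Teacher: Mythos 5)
Your proof is correct and is exactly the route the paper intends: the paper gives no separate argument beyond the remark that the lemma "follows directly from Lemma \ref{K inv}", i.e.\ precisely the Fourier-multiplier computation $\hat f_l = 2|l|\hat g_l$ for $l\neq 0$, $\hat f_0=0$, that you carry out. Your observation that the reverse bound $\|g\|_{H^k}\lesssim\|f\|_{H^{k-1}}$ must be read modulo the constant mode of $g$ is also right, and is consistent with how the lemma is used later (e.g.\ in Lemma \ref{L10}, where the target space is $H^k(\partial B)/\R$).
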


The invertibility of $\widetilde \K_{\theta,0}$ near $\widetilde \K_{0,0}$ follows now via a perturbation argument:

\begin{lemma}\label{L10}
For any sufficiently small $\theta\in H^k(\de B)$ and any $g\in H^k(\partial B)$, the problem 
\begin{align*}
    \widetilde{\mathcal{K}}_{\theta,0} f - g &=\const\quad\mbox{on }\partial B,\\
    \int_{\partial B}m_{\theta} f\, \mathrm{d}\Ha^1 & = 1,
\end{align*}
has a unique solution $f\in H^{k-1}(\partial B)$. It satisfies the estimate
\[
\|f\|_{H^{k-1}}\lesssim \|g\|_{H^k}+1.
\]
Moreover, for any fixed $g\in H^k(\partial B)$, the map $\theta\rightarrow f$ is Fr\'echet-smooth near $\theta=0$ as a function from $H^k(\partial B)$ to $H^{k-1}(\partial B)$.
\end{lemma}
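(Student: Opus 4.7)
The strategy is to recast both conditions as a single linear equation and invoke the openness of the set of Banach-space isomorphisms. Denote by $q:H^k(\partial B)\to H^k(\partial B)/\R$ the canonical quotient by constants, and define the bounded linear operator
\begin{equation}
T_\theta:H^{k-1}(\partial B)\to H^k(\partial B)/\R\times \R,\qquad T_\theta f:=\bigl(q(\widetilde\K_{\theta,0} f),\ \int_{\partial B}m_\theta f\dd\Ha^1\bigr).
\end{equation}
The problem to solve is exactly $T_\theta f=(q(g),1)$. Boundedness of $T_\theta$ is clear: the first component is Lemma \ref{L7} composed with the continuous projection $q$, and the second is controlled by $\|m_\theta\|_{L^2}\|f\|_{L^2}$.

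The first step is to show that $T_0$ is an isomorphism. Given arbitrary data $(q(g),c)\in H^k(\partial B)/\R\times\R$, Lemma \ref{L6} produces a unique $f_0\in H^{k-1}(\partial B)$ with $\int f_0\dd\Ha^1=0$ and $q(\widetilde\K_{0,0}f_0)=q(g)$. Since $\widetilde\K_{0,0}$ annihilates constants by \eqref{63}, the function $f:=f_0+c/(2\pi)$ satisfies both prescribed conditions at $\theta=0$. Injectivity follows because any null solution must have zero mean (second condition with $c=0$) and zero image in the quotient (first condition with $g=0$), whence $f=0$ by Lemma \ref{L6}. The inverse estimate $\|T_0^{-1}(q(g),c)\|_{H^{k-1}}\lesssim \|g\|_{H^k}+|c|$ follows from \eqref{59} applied to the zero-mean part $f_0$ and the trivial bound $\|c/(2\pi)\|_{H^{k-1}}\lesssim |c|$.

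Next I would verify that $\theta\mapsto T_\theta$ is Fr\'echet smooth as a map from a neighborhood of $0\in H^k(\partial B)$ into the space of bounded operators. Smoothness of the first component is Lemma \ref{L7} composed with the (linear) quotient map $q$. For the second component, $\theta\mapsto m_\theta\in H^{k-1}(\partial B)$ is smooth by Lemma \ref{DeriNormal}, so the linear functional $f\mapsto \int m_\theta f\dd\Ha^1$ on $H^{k-1}(\partial B)$ depends smoothly on $\theta$ via Cauchy--Schwarz. Therefore $T_\theta$ is a smooth family of bounded operators.

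Since the set of Banach-space isomorphisms is open and inversion is smooth on it, $T_\theta$ remains invertible for $\|\theta\|_{H^k}$ sufficiently small, and $\theta\mapsto T_\theta^{-1}$ is Fr\'echet smooth. Setting $f:=T_\theta^{-1}(q(g),1)$ then produces the unique solution, the bound $\|f\|_{H^{k-1}}\lesssim \|g\|_{H^k}+1$ (with constant uniform on a neighborhood of $0$) follows from uniform boundedness of $T_\theta^{-1}$, and the smoothness of $\theta\mapsto f$ for fixed $g$ is immediate from that of $\theta\mapsto T_\theta^{-1}$. The only substantive point is bijectivity of $T_0$; the perturbation that follows is entirely standard, so I foresee no serious obstacle.
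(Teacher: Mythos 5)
Your proof is correct and follows essentially the same route as the paper: the invertibility at $\theta=0$ rests on Lemma \ref{L6} (together with \eqref{63} and \eqref{59}), and the perturbation in $\theta$ rests on the smoothness supplied by Lemma \ref{L7} and Lemma \ref{DeriNormal}. The only difference is packaging: the paper applies the implicit function theorem to the affine-in-$f$ functional $\mathcal{J}(\theta,f)$, while you exploit the linearity in $f$ directly via smoothness of the operator family $T_\theta$ and openness of the set of isomorphisms, which for this affine problem amounts to the same argument.
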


\begin{proof}
We derive the statement of the lemma with the help of the implicit function theorem. For this purpose, we study the functional \begin{align*}\mathcal{J}: H^k(\partial B)\times H^{k-1}(\partial B) \to \left(H^{k}(\partial B)/\R \right) \times \R,\end{align*}
 given by
\begin{equation*}
\mathcal{J}(\theta,f) = \left(\widetilde\K_{\theta,0}f -g,\int_{\partial B} fm_{\theta}\, \mathrm{d}\Ha^1 -1\right).
\end{equation*}
It is well-defined and, by the virtue of Lemma \ref{L7} and Lemma \ref{DeriNormal},  Fr\'echet-smooth, provided that $\|\theta\|_{H^k}$ is small enough. Using \eqref{63} and the inverse of $\tilde{\mathcal{K}}_{0,0}$ defined via Lemma \ref{L6}, we furthermore notice that $\mathcal{J}(0,\widetilde{\K}_{0,0}^{-1}g + \frac{1}{2\pi})=(\const,0)$. Moreover, the differential
\[
\la \mathrm{D}_f \mathcal{J}(0,\frac{1}{2\pi}), \delta f\ra  = \left(\widetilde{\K}_{0,0}\delta f,\int_{\partial B}\delta f\, \mathrm{d}\Ha^1\right)
\]
is an isomorphism from $\delta f \in H^{k-1}(\partial B)$ to $H^k(\partial B)/\R\times \R$ thanks to Lemma \ref{L6}.

By the implicit function theorem, we thus find a unique solution $f_\theta$ to the identity $\mathcal{J}(\theta,f_{\theta})=(0,0)\in H^{k}(\partial B)/\R\times \R$, provided that $\|\theta\|_{H^k}$ is sufficiently small, and this solution $f_\theta$ is smooth in $\theta$.
\end{proof}

By choosing $g=0$ in the above statement, we derive the regularity of $\tilde \mu_{\theta,0}$ as defined in \eqref{def mu0}.

\begin{proposition}
\label{Lem:muDiffable}
The function $\tilde{\mu}_{\theta,0}$  is well-defined, at least for small enough $\theta$, and it is Fr\'echet smooth in $\theta$ as a map from $H^{k}(\de B)$ to $H^{k-1}(\de B)$. It holds $\tilde \mu_{0,0}=\frac1{2\pi}$, and its first derivative at $\theta=0$ is
\[
\la\left. \mathrm{D}_\theta\right|_{\theta=0}\tilde{\mu}_{\theta,0},\delta\theta\ra =-\frac{1}{2\pi}(\delta\theta-\frac{1}{2}\tilde{\mathcal{K}}_{0,0}^{-1}\delta\theta),
\]
provided that $\delta\theta\in H^k(\partial B)$ is a mean-zero function. 
%In particular, it holds that
%\[
%\la \left. D\right|_{\theta=0} (\K_{\theta,0}\tilde{\mu}_{\theta,0} ),\delta \theta\ra=0,
%\]
%for any such $\delta \theta$.
\end{proposition}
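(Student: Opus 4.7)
The existence, uniqueness and smoothness of $\tilde\mu_{\theta,0}$ follow immediately from Lemma \ref{L10} applied with $g=0$: for $\theta$ small enough in $H^k(\partial B)$, there is a unique $f\in H^{k-1}(\partial B)$ with $\widetilde{\K}_{\theta,0} f=\const$ and $\int_{\partial B} m_\theta f\, \mathrm{d}\Ha^1=1$, and the map $\theta\mapsto f$ is Fr\'echet smooth. This gives the regularity assertion. For the base value $\tilde\mu_{0,0}$, observe that the constant function $\frac{1}{2\pi}$ satisfies $\widetilde{\K}_{0,0}\frac{1}{2\pi}=0$ (since by Lemma \ref{K inv} the Newtonian kernel kills constants) and $\int_{\partial B} m_0\cdot \frac{1}{2\pi}\, \mathrm{d}\Ha^1=1$; uniqueness forces $\tilde\mu_{0,0}=\frac{1}{2\pi}$.

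To compute the derivative, I implicitly differentiate the defining system. Write $g:=\la \mathrm{D}_\theta|_{\theta=0}\tilde\mu_{\theta,0},\delta\theta\ra$ and let $c(\theta)$ denote the value of the constant on the right-hand side of the first defining equation. Differentiating the integral constraint and using Lemma \ref{DeriNormal} together with $\tilde\mu_{0,0}=\frac{1}{2\pi}$ yields
\[
\int_{\partial B}g\,\mathrm{d}\Ha^1 \;=\; -\frac{1}{2\pi}\int_{\partial B}\delta\theta\,\mathrm{d}\Ha^1 \;=\; 0,
\]
using that $\delta\theta$ has mean zero. Differentiating the equation $\widetilde{\K}_{\theta,0}\tilde\mu_{\theta,0}=c(\theta)$ and invoking the formula in Lemma \ref{L7} with $f=\frac{1}{2\pi}$ gives, after cancelling the $\int f\,\delta\theta$ term by the mean-zero assumption,
\[
\widetilde{\K}_{0,0}g \;=\; c'-\frac{1}{2\pi}\widetilde{\K}_{0,0}\delta\theta + \frac{1}{4\pi}\delta\theta,
\]
where $c':=\la \mathrm{D}_\theta c|_{\theta=0},\delta\theta\ra\in\R$ is an undetermined constant.

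Rewriting this as $\widetilde{\K}_{0,0}(g+\tfrac{1}{2\pi}\delta\theta)=\tfrac{1}{4\pi}\delta\theta+c'$, I apply Lemma \ref{L6} to the mean-zero datum $\tfrac{1}{4\pi}\delta\theta$ (the additive constant $c'$ is irrelevant modulo constants) to conclude
\[
g+\frac{1}{2\pi}\delta\theta \;=\; \frac{1}{4\pi}\widetilde{\K}_{0,0}^{-1}\delta\theta + \tilde c
\]
for some $\tilde c\in\R$. Integrating and using $\int g=0$, $\int\delta\theta=0$, and the fact from Lemma \ref{L6} that $\widetilde{\K}_{0,0}^{-1}\delta\theta$ also has mean zero, one gets $\tilde c=0$, yielding the claimed identity
\[
g \;=\; -\frac{1}{2\pi}\Bigl(\delta\theta-\tfrac{1}{2}\widetilde{\K}_{0,0}^{-1}\delta\theta\Bigr).
\]

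There is no real obstacle here; the argument is essentially a bookkeeping exercise in implicit differentiation combined with the linear algebra of $\widetilde{\K}_{0,0}$ on mean-zero functions. The one thing to be careful about is tracking the undetermined constants appearing both on the right-hand side of the defining equation and in the inversion of $\widetilde{\K}_{0,0}$, which is where the hypothesis that $\delta\theta$ is mean-zero pays off: it makes the integral constraint reduce to $\int g=0$ and simultaneously kills the cross term in the Lemma \ref{L7} formula.
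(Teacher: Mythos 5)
Your proposal is correct and follows essentially the same route as the paper: existence and smoothness via Lemma \ref{L10} with $g=0$, then implicit differentiation of the defining equations using Lemma \ref{L7} and Lemma \ref{DeriNormal}, and inversion of $\widetilde{\K}_{0,0}$ on mean-zero functions via Lemma \ref{L6} (the paper cites Lemma \ref{K inv} at that point, which is the same fact). The only cosmetic difference is that you carry an extra constant $\tilde c$ and eliminate it by integrating, whereas the uniqueness statement with the mean-zero normalization gives it directly.
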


\begin{remark}\label{deri mu Fourier}
Written as a Fourier multiplier, this derivative at $0$ is \begin{align}
\cos(k\alpha)\rightarrow -\frac{1}{2\pi}(\cos(k\alpha)-|k|\cos(k\alpha)),
\end{align} by Lemma \ref{K inv}. In particular, the derivative is orthogonal to $x_1$, which reflects the translation invariance of the problem.
\end{remark}

\begin{proof}[Proof of Proposition \ref{Lem:muDiffable}]
Of course, choosing $g=0$ in Lemma \ref{L10}, we see that $\tilde{\mu}_{\theta,0}=f_{\theta}$ exists and is smooth in $\theta$.

In order to obtain the formula for the first derivative, we differentiate the identities defining $f_{\theta}$ and obtain
\begin{gather*}
\la \left. \mathrm{D}_{\theta} \right|_{\theta=0} \widetilde \K_{\theta,0}f_0,\delta \theta\ra + \widetilde{\K}_{0,0}\la \left. \mathrm{D}_{\theta} \right|_{\theta=0} f_{\theta}, \delta \theta\ra   \equiv 0,\quad\text{ modulo constants}\\
\int_{\partial B}\la  \left. \mathrm{D}_{\theta} \right|_{\theta=0} f_{\theta},\delta \theta\ra\, \mathrm{d}\Ha^1 + \int_{\partial B} f_0 \la  \left. \mathrm{D}_{\theta} \right|_{\theta=0} m_{\theta},\delta \theta\ra\, \mathrm{d}\Ha^1  =0.
\end{gather*}
Using the formulas for the derivatives of $\widetilde  \K_{\theta,0}$ and $m_{\theta}$ in the Lemmata \ref{L7} and \ref{DeriNormal} and recalling that $f_0=\frac{1}{2\pi}$, we find the identities
\[
\widetilde{\K}_{0,0}\left(\la \left.\mathrm{D}\right|_{\theta=0} f_\theta,\delta \theta\ra + \frac{1}{2\pi} \delta \theta\right) = \frac{1}{4\pi} \delta \theta +\const,\quad \int_{\partial B} \left(\la \left.\mathrm{D}\right|_{\theta=0} f_\theta,\delta \theta\ra + \frac{1}{2\pi} \delta \theta\right)\, \mathrm{d}\Ha^1=0.
\]
We know from Lemma \ref{K inv} that this problem has a unique solution, and the unknown constant is actually zero if $\delta\theta$ has zero mean. 

Furthermore, by the mean-freeness of $\delta\theta$, we have by Lemma \ref{L6} that the inverse $\widetilde{\K}_{0,0}^{-1}\delta\theta$ is well-defined and hence obtain by rearranging that \begin{align}
\la \left.\mathrm{D}\right|_{\theta=0} f_\theta,\delta \theta\ra=-\frac{1}{2\pi}\left(\delta\theta-\frac{1}{2}\widetilde{\K}_{0,0}^{-1}\delta\theta\right).
\end{align}
The proposition is proven.
\end{proof}

It will also be convenient for later to estimate the unknown constant in \eqref{def mu0}. 

\begin{lemma}\label{CapMaximal}
We have that $\widetilde \K_{\theta,0}\tilde{\mu}_{\theta,0}$ is smooth in $\theta\in H^k$ in a neighborhood of $0$ and it holds that \begin{align}
|\widetilde \K_{\theta,0}\tilde{\mu}_{\theta,0}|\lesssim \norm{\theta}_{H^k}^2\label{deri0}
\end{align}
for all small enough $\theta\in V^k$.
\end{lemma}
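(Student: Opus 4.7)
The plan is to exploit that $\widetilde\K_{\theta,0}\tilde\mu_{\theta,0}$ is, for each admissible $\theta$, constant on $\partial B$ by the defining property \eqref{def mu0}, so we can identify it with a real number $c(\theta)\in\R$. Fr\'echet smoothness of $c$ in $\theta\in H^k(\partial B)$ near $0$ is immediate by composing smooth maps: $\theta\mapsto\widetilde\K_{\theta,0}$ is smooth into $\mathcal{L}(H^{k-1}(\partial B),H^k(\partial B))$ by Lemma \ref{L7}, $\theta\mapsto\tilde\mu_{\theta,0}$ is smooth into $H^{k-1}(\partial B)$ by Proposition \ref{Lem:muDiffable}, and pointwise evaluation $H^k(\partial B)\to\R$ is continuous since $k\ge 5$.

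For the quantitative estimate, Proposition \ref{tspace M} gives that $V^k$ is a smooth Banach submanifold of $H_{\sym}^k(\partial B)$ near $0$, with tangent space $T_0V^k$ consisting of $\delta\theta\in H_{\sym}^k$ satisfying $\la\delta\theta,1\ra=\la\delta\theta,y_1\ra=0$. A Taylor expansion of $c$ in a smooth chart then produces $|c(\theta)|\lesssim\norm{\theta}_{H^k}^2$ for $\theta\in V^k$ in a neighborhood of $0$, as soon as one verifies $c(0)=0$ and $\la c'(0),\delta\theta\ra=0$ for every $\delta\theta\in T_0V^k$.

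The first identity is immediate: $\tilde\mu_{0,0}=\tfrac{1}{2\pi}$ and Lemma \ref{K inv} gives $\widetilde\K_{0,0}(1)=0$, so $c(0)=0$. For the derivative, the product rule yields
\begin{align*}
\la c'(0),\delta\theta\ra
= \la D_\theta|_0 \widetilde\K_{\theta,0}\tilde\mu_{0,0},\delta\theta\ra
+ \widetilde\K_{0,0}\la D_\theta|_0 \tilde\mu_{\theta,0},\delta\theta\ra .
\end{align*}
Because $\delta\theta\in T_0V^k$ is mean-zero, Lemma \ref{L7} with $f=\tfrac{1}{2\pi}$ reduces the first summand to $\tfrac{1}{2\pi}\widetilde\K_{0,0}\delta\theta-\tfrac{\delta\theta}{4\pi}$, while Proposition \ref{Lem:muDiffable} together with the identity $\widetilde\K_{0,0}\widetilde\K_{0,0}^{-1}\delta\theta=\delta\theta$ (which holds on mean-zero data by Lemma \ref{L6}) reduces the second to $-\tfrac{1}{2\pi}\widetilde\K_{0,0}\delta\theta+\tfrac{\delta\theta}{4\pi}$. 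These two expressions cancel pointwise on $\partial B$, so $\la c'(0),\delta\theta\ra=0$ and the claim follows.

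The only nontrivial step is the last displayed cancellation, which is the sole place where the two delicate derivative formulas of Lemma \ref{L7} and Proposition \ref{Lem:muDiffable} are used simultaneously; everything else is routine smooth-calculus bookkeeping on the manifold $V^k$. I do not expect further obstacles.
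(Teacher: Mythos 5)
Your proof is correct and follows essentially the same route as the paper: smoothness via the product rule from Lemma \ref{L7} and Proposition \ref{Lem:muDiffable}, then a Taylor expansion on the smooth manifold $V^k$ reducing the quadratic bound to the vanishing of the value and of the $\theta$-derivative at $0$ on mean-zero $\delta\theta\in T_0V^k$, where the two derivative formulas cancel exactly as in the paper's computation.
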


\begin{proof}
We already know that $\widetilde \K_{\theta,0}$ and $\tilde\mu_{\theta,0}$ are smooth, so smoothness follows from the product rule. To show \eqref{deri0}, it suffices to show that \begin{align}
\la\mathrm{D}_\theta\big|_{\theta=0}\widetilde  \K_{\theta,0},\delta\theta \ra \tilde{\mu}_{0,0}+\widetilde{\K}_{0,0}\la \mathrm{D}_\theta\big|_{\theta=0}\tilde{\mu}_{\theta,0},\delta\theta\ra=0,
\end{align}

\noindent for all $\delta\theta\in T_0V^k$. Note that all such $\delta\theta$ are orthogonal to constants by Proposition \ref{tspace M} Hence by Lemma \ref{L7} and \ref{Lem:muDiffable}, and because $\tilde \mu_{0,0}=\frac1{2\pi}$, this expression evaluates to \begin{align}
\widetilde{\K}_{0,0}\left(-\frac{1}{2\pi}\left(\delta\theta-\frac{1}{2}\widetilde{\K}_{0,0}^{-1}(\delta\theta)\right)\right)+\widetilde{\K}_{0,0}\left(\frac{1}{2\pi}\delta\theta\right)-\frac{\delta\theta}{4\pi}=0,
\end{align}
as desired.
\end{proof}

\subsection{The map $\K_{\theta,\eps}$}
In this subsection, we will analyze the asymptotic of $\K_{\theta,\eps}$ and show that this kernel is still invertible modulo constants (Lemma \ref{L15}).

\begin{lemma}\label{L5}
The kernel $K_{\theta,\eps}$ enjoys the asymptotic expansion \begin{align}
K_{\theta,\eps}(y,\tilde y)&= \frac{1}{2\pi}\left(\log\frac1{|y-\tilde y|}+\log \frac1{\eps} +\log(8)-2\right)\\
&\quad+ \eps \frac{y_1+\tilde y_1}{4\pi} \left(\log\frac1{|y-\tilde y|}+\log (8)-1+\log\frac1{\eps}\right)\nonumber\\
&\quad +\frac{1}{2\pi}\theta(\tilde y)\left(\log\frac1{|y-\tilde y|}+\log(8)-2+\log\frac1{\eps}\right)\nonumber\\
&\quad -\frac{1}{4\pi}\left(\theta(y)+\theta(\tilde y)\right)+R_{\theta,\eps}(y,\tilde y),\nonumber
\end{align}
and the linear operator  $\mathcal{R_{\theta,\eps}}$, associated to the remainder $R_{\theta,\eps}$ is an integral kernel mapping $H^{k-1}(\de B)$ to $H^{k}(\de B)$ with estimate
\[ 
\|\mathcal{R}_{\theta,\eps} f\|_{H^k} \lesssim  |\log\eps|(\norm{\theta}_{H^k}^2+\eps^2) \|f\|_{H^{k-1}},
\]
if $\norm{\theta}_{H^k}$ and $\eps$ are small enough.

Moreover $\mathcal{R_{\theta,\eps}}$ is continuously Fr\'echet differentiable in $(\theta,\eps)$ (away from $0$) as an operator from $H^{k-1}(\partial B)$ to $H^k(\partial B)$ with estimates
\begin{align*}
\|\mathrm{D}_{\eps}\mathcal{R}_{\theta,\eps} f\|_{H^k}  &\lesssim \left(\eps |\log \eps | + \frac1\eps \|\theta\|_{H^k}^2 \right) \|f\|_{H^{k-1}},\\
\| \la \mathrm{D}_{\theta}\mathcal{R}_{\theta,\eps} ,\delta \theta\ra f\|_{H^k} &\lesssim \left(\eps + \|\theta\|_{H^k}\right)|\log\eps| \|\delta \theta\|_{H^k}\|f\|_{H^{k-1}},
\end{align*}
if $\norm{\theta}_{H^k}$ and $\eps$ are small enough.
%operator norm $\lesssim \eps+\frac{\norm{\theta}_{H^k}^2}{\eps}+|\log\eps|\norm{\theta}_{H^k}$.
\end{lemma}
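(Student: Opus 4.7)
The plan is to insert the series expansion of $F$ from Lemma \ref{LemmaF} into the explicit formula \eqref{43} for $K_{\HH}$ and combine with \eqref{200}. Writing $r = 1 + \eps(\chi_\theta(y))_1$ and $\tilde r = 1 + \eps(\chi_\theta(\tilde y))_1$, the argument of $F$ becomes
\[
s = \eps^2 \frac{|\chi_\theta(y) - \chi_\theta(\tilde y)|^2}{r\tilde r},
\]
so that $\log(1/\sqrt{s}) = \log(1/\eps) + \tfrac{1}{2}\log(r\tilde r) - \log|\chi_\theta(y) - \chi_\theta(\tilde y)|$. Applying Lemma \ref{L3} to the last term produces $\log|y-\tilde y|$ plus a correction of the form $\tfrac{1}{2}\log(1+A)$, with $A = \theta(y) + \theta(\tilde y) + \theta(y)\theta(\tilde y) + (\theta(y)-\theta(\tilde y))^2/|y-\tilde y|^2$. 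Since $H^k \hookrightarrow C^1$, the difference quotient in $A$ stays bounded, and $A = \theta(y) + \theta(\tilde y) + O(\|\theta\|_{H^k}^2)$.

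Next the plan is to Taylor-expand $m_\theta(\tilde y) = 1 + \theta(\tilde y) + O(\theta^2)$ from \eqref{20}, and $\sqrt{r\tilde r} = 1 + \tfrac{\eps}{2}(y_1 + \tilde y_1) + O(\eps^2) + O(\eps\theta)$, and to retain only the terms that are constant, linear in $\eps$, or linear in $\theta$. Carrying out the multiplication and noting that the nonlogarithmic contribution $\tfrac{\eps}{2}(y_1+\tilde y_1)$ from $\sqrt{r\tilde r}$ combines with the $-2$ inside the bracket $L := \log(8/\eps) - 2 + \log(1/|y-\tilde y|)$ to turn that constant into $-1$ in the relevant bracket, one recovers precisely the four explicit terms listed in the statement. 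All other contributions — the quadratic tail of $\log(1+A)$, the $O(\eps^2)$ piece of $\sqrt{r\tilde r}$, cross terms of size $\eps\,\theta$, and the $f_1(s) + f_2(s)\log s$ terms (which scale like $\eps^2$ and $\eps^2|\log\eps|$ because $s \sim \eps^2|y-\tilde y|^2/(r\tilde r)$ and $f_j(0) = 0$) — are lumped into $R_{\theta,\eps}$.

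To bound $\mathcal{R}_{\theta,\eps}$ from $H^{k-1}(\partial B)$ to $H^{k}(\partial B)$, each contribution to $R_{\theta,\eps}$ can be written as a finite sum of kernels of the form \eqref{smoothKernel1} or \eqref{smoothKernel2} from Lemma \ref{L4}, where the smooth prefactor $r_1$ or $r_2$ carries an explicit factor of $\theta(y)^2$, $\theta(\tilde y)^2$, $(\theta(y)-\theta(\tilde y))^2/|y-\tilde y|^2$, $\eps\theta$, or $\eps^2$. The universal factor $|\log\eps|$ in the bound enters because several of these remainders multiply $\log(1/\eps)$ coming from $\log(1/\sqrt s)$, and because $f_2(s)\log s$ contributes $\eps^2|\log\eps|$. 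Using $\eps\|\theta\|_{H^k} \leq \tfrac12(\eps^2 + \|\theta\|_{H^k}^2)$ to absorb the cross terms and invoking Proposition \ref{Kernelsmooth} then yields both the claimed operator norm bound and the Fréchet smoothness of $\mathcal{R}_{\theta,\eps}$ in $(\theta,\eps)$ for $\eps > 0$.

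The derivative bounds follow by differentiating the asymptotic structure term by term, exploiting that Proposition \ref{Kernelsmooth} identifies Fréchet derivatives with pointwise derivatives of the kernel. An $\eps$-derivative converts a $\log(1/\eps)$ into $-1/\eps$ and reduces each positive power of $\eps$ by one, yielding the bound $\eps|\log\eps| + \eps^{-1}\|\theta\|_{H^k}^2$ on $\|\mathrm{D}_\eps \mathcal{R}_{\theta,\eps}\|$. A $\theta$-derivative consumes one factor of $\theta$ in the $O(\|\theta\|^2)$ pieces while leaving the $\eps^2$-pieces unchanged, producing $|\log\eps|(\eps + \|\theta\|_{H^k})$. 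The principal technical obstacle will be the bookkeeping: verifying, term by term, that the pointwise derivative of each remainder piece still has the structure \eqref{smoothKernel1} or \eqref{smoothKernel2} required by Lemma \ref{L4}, and in particular that differentiating a difference-quotient factor preserves the admissible form — exactly the mechanism used in the proof of Proposition \ref{Kernelsmooth}.
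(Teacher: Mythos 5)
Your proposal is correct and follows essentially the same route as the paper's proof: expand $F$ via Lemma \ref{LemmaF}, use Lemma \ref{L3} for $\log|\chi_\theta(y)-\chi_\theta(\tilde y)|$, Taylor-expand the prefactors $m_\theta$ and $\sqrt{r\tilde r}$ in $(\theta,\eps)$, and control the remainder (including the $f_1+f_2\log s$ part, which vanishes to the right order since $f_1(0)=f_2(0)=0$) by recognizing its pieces as kernels of the form in Lemma \ref{L4}, so that Proposition \ref{Kernelsmooth} together with a Taylor-type argument gives the operator bounds and the derivative estimates. The only difference is presentational: the paper gets the quantitative smallness $|\log\eps|(\eps^2+\|\theta\|_{H^k}^2)$ by applying Taylor's theorem in the Banach-space (Fr\'echet) sense to the smooth kernel family, whereas you phrase it as explicit quadratic prefactors, which amounts to the same mechanism.
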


The linear maps associated with the explicit terms in this expansion are all also continuously Fr\'echet differentiable (except for the singularity of $\log\frac{1}{\eps}$ at $\eps=0$) with operator norms and derivatives of the obvious order as a straightforward application of Lemma \ref{L4} and Proposition \ref{Kernelsmooth} shows. This implies the following bounds, which we state without a proof.

\begin{lemma}\label{bd Ktheta}
The map $\mathcal{K}_{\theta,\eps}$ is bounded from $H^{k-1}(\de B)$ to $H^k(\de B)$ with estimate
\[
\|\mathcal{K}_{\theta,\eps} f\|_{H^k} \lesssim  |\log\eps|\|f\|_{H^{k-1}},
\]
for small enough $\theta$ and $\eps$ and it is continuously Fr\'echet differentiable in $(\theta,\eps)$ in these spaces away from $0$ with estimate
\begin{align*}
\|\mathrm{D}_{\eps}\mathcal{K}_{\theta,\eps} f\|_{H^k} &\lesssim \frac1{\eps} \|f\|_{H^{k-1}},\\
\|\la \mathrm{D}_{\theta}\mathcal{K}_{\theta,\eps},\delta \theta\ra f\|_{H^k} &\lesssim   |\log\eps| \|f\|_{H^{k-1}}\|\delta \theta\|_{H^k}.
\end{align*}
\end{lemma}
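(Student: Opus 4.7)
\textbf{Proof plan for Lemma \ref{bd Ktheta}.}
The strategy is to deduce the boundedness and differentiability of $\mathcal{K}_{\theta,\eps}$ directly from the asymptotic expansion of $K_{\theta,\eps}$ provided by Lemma \ref{L5}. Write
\[
K_{\theta,\eps} = K^{(1)}_{\theta,\eps} + K^{(2)}_{\theta,\eps} + K^{(3)}_{\theta,\eps} + K^{(4)}_{\theta,\eps} + R_{\theta,\eps},
\]
where $K^{(1)}$ denotes the leading constant-plus-logarithm term, $K^{(2)}$ the $\eps (y_1+\tilde y_1)$-contribution, $K^{(3)}$ the $\theta(\tilde y)$-contribution, $K^{(4)}$ the pointwise $\theta$-terms, and $R_{\theta,\eps}$ the remainder already controlled by Lemma \ref{L5}. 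Each explicit $K^{(i)}$ is of the form $r(\theta(y),\theta(\tilde y),\eps,y,\tilde y)$ plus $\tilde r(\theta(y),\theta(\tilde y),\eps,y,\tilde y)\log|y-\tilde y|$, i.e.\ exactly the kernel types $R_1$, $R_2$ of Lemma \ref{L4}; and the coefficients $r,\tilde r$ depend smoothly on $(\theta,\eps)$ for $\eps>0$, albeit with prefactors of order $|\log\eps|$ wherever a $\log(1/\eps)$ appears.

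For the boundedness $\|\mathcal{K}_{\theta,\eps}f\|_{H^k}\lesssim |\log\eps|\,\|f\|_{H^{k-1}}$, I would apply Lemma \ref{L4} to each explicit piece: its hypothesis is satisfied since the prefactors are globally smooth in all arguments once $\|\theta\|_{H^k}$ is small, and the resulting operator norm is bounded by $\|r\|_{C^{k+2}}+\|\tilde r\|_{C^{k+2}}$, which is $\lesssim |\log\eps|$ because of the scalar factor $\log(1/\eps)$. The remainder contributes $|\log\eps|(\|\theta\|_{H^k}^2+\eps^2)\|f\|_{H^{k-1}}\lesssim |\log\eps|\|f\|_{H^{k-1}}$ by Lemma \ref{L5}. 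For Fr\'echet smoothness in $(\theta,\eps)$ for $\eps>0$, Proposition \ref{Kernelsmooth} shows each $\mathcal{K}^{(i)}_{\theta,\eps}$ is smooth as a map into $C(H^{k-1},H^k)$ (the $\eps$-smoothness is elementary since the only $\eps$-singular dependence is through smooth functions of $\log(1/\eps)$ and $\eps$ which are $C^\infty$ on $(0,\delta)$); the remainder is already continuously Fr\'echet differentiable by Lemma \ref{L5}.

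For the derivative bounds, I would just read off the contributions term by term. The $\eps$-derivative of $\frac{1}{2\pi}\log(1/\eps)$ in $K^{(1)}$ produces a factor $-\frac{1}{2\pi\eps}$ in front of a bounded $H^{k-1}\to H^k$ operator (namely integration against $m_\theta\equiv 1$), giving the announced $1/\eps$. Each other explicit term carries either an extra factor of $\eps$ or $\theta$, so that its $\eps$-derivative contributes at most $|\log\eps|$ or $\|\theta\|_{H^k}|\log\eps|$, both $\lesssim 1/\eps$ for small $\eps$. The remainder's contribution $\eps|\log\eps|+\eps^{-1}\|\theta\|_{H^k}^2$ is also absorbed. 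For the $\theta$-derivative, only $K^{(3)},K^{(4)},R_{\theta,\eps}$ contribute; $K^{(4)}$ gives an operator of norm $O(\|\delta\theta\|_{H^k})$, and $K^{(3)}$ differentiated in $\theta$ yields a kernel of type \eqref{smoothKernel1}/\eqref{smoothKernel2} with prefactor $\delta\theta(\tilde y)$ multiplied by the $\log(1/\eps)$ coefficient, which by Lemma \ref{L4} gives operator norm $\lesssim |\log\eps|\|\delta\theta\|_{H^k}$; the remainder contributes $(\eps+\|\theta\|_{H^k})|\log\eps|\|\delta\theta\|_{H^k}\lesssim |\log\eps|\|\delta\theta\|_{H^k}$.

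The main obstacle is purely bookkeeping: one must verify that no hidden term in the product/chain rule produces a singularity worse than $1/\eps$ in the $\eps$-derivative. This comes down to observing that $K^{(1)}$ is the unique term whose coefficient is a nonzero constant in $(\theta,\eps)$ multiplied by $\log(1/\eps)$, while every other $\log(1/\eps)$ coefficient carries a vanishing factor ($\eps$, $\theta(y)$, $\theta(\tilde y)$, or a difference quotient). All remaining technicalities are already encapsulated in Lemma \ref{L4}, Proposition \ref{Kernelsmooth}, and Lemma \ref{L5}.
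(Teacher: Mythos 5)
Your proposal is correct and takes essentially the same route the paper intends: the paper states this lemma without proof, remarking that it follows from the expansion of Lemma \ref{L5} together with Lemma \ref{L4} and Proposition \ref{Kernelsmooth}, which is precisely your term-by-term decomposition and bookkeeping. (One cosmetic slip: the $\eps$-derivative of the $\theta(\tilde y)\log(1/\eps)$ term is of size $\norm{\theta}_{H^k}/\eps$ rather than $\norm{\theta}_{H^k}|\log\eps|$, but as your closing paragraph correctly observes, every $\log(1/\eps)$ coefficient other than the constant one carries a small factor, so the contribution is still $\lesssim 1/\eps$ and the stated bounds are unaffected.)
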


\begin{proof}[Proof of Lemma \ref{L5}]
For notational convenience, we introduce 
\begin{equation}\begin{aligned}
&s_1=|y(1+\theta(y))-\tilde{y}(1+\theta(\tilde{y}))|^2,\\
&s_2=\sqrt{(1+\eps y_1(1+\theta(y)))(1+\eps\tilde{y}_1(1+\theta(\tilde{y})))}.\label{s1s2}
\end{aligned}\end{equation}

\noindent In view of the definition  \eqref{200} of the kernel, the representation \eqref{43} and the expansion \eqref{SeriesK}, we  have that \begin{align}
K_{\theta,\eps}(y,\tilde y)& = m_{\theta}(\tilde y)\frac{s_2}{2\pi}F\left(\frac{\eps^2 s_1}{s_2^2}\right)\\
&= \frac{s_2 m_\theta(\tilde y)}{2\pi}\left(\log\left(\frac{1}{\sqrt{s_1}}\right)+\log(s_2)+\log(\frac{1}{\eps})+\log(8)-2+g\left(\eps^2 \frac{s_1}{s_2^2}\right)\right),
\end{align}

\noindent where $g(s):=F(s)-\log\left(\frac{8}{\sqrt{s}}\right)+2$ denotes the higher order terms of $F$. 
Taking a first-order Taylor expansion of $s_1,s_2$ and $m_\theta$ in $\eps$ and $\theta$ around $0$ and using Lemma \ref{L3} on the logarithm, one obtains the expansion above, where the remainders of this Taylor expansion and the contribution of $g$ make up the remainder term $R_{\theta,\eps}(y,\tilde y)$. 
It merely remains to argue the boundedness of the remainder.

We treat the contribution of $g$ and the other Taylor remainders differently.

After using the expansion in Lemma \ref{L3} of the logarithm, we see that all terms except $g(\eps^2\frac{s_1}{s_2^2})$ have the structure considered in Proposition \ref{Kernelsmooth} or are the product of $\log\eps$ with a term of this form. In particular they are Fr\'echet smooth in $(\theta,\eps)$ (resp.\ the product of $\log\eps$ with a smooth function)  by Proposition \ref{Kernelsmooth} and the Fr\'echet derivative and the pointwise derivative agree.

This implies that the pointwise Taylor expansion and the expansion with respect to the Fr\'echet derivatives of these terms agree and this part of the remainder has operator norm not larger than order $ |\log\eps|(\eps^2+\norm{\theta}_{H^k}^2)$ by Taylor's theorem. Furthermore, this remainder is differentiable with the corresponding bound by Taylor's theorem.

For $g$, we make use of the expansion of $F$ in Lemma \ref{LemmaF}.
This yields  \begin{equation}\begin{aligned}\label{remainderkernel}
\mel s_2m_\theta g\left(\eps^2\frac{s_1}{s_2^2}\right)= s_2 m_\theta f_1\left(\eps^2 \frac{s_1}{s_2^2}\right)+s_2m_\theta f_2\left(\eps^2\frac{s_1}{s_2^2}\right)(\log(s_1)+2\log\eps-2\log(s_2)).
\end{aligned}\end{equation}

\noindent After expanding $\log(s_1)$ in the second term with the help of Lemma \ref{L3}, we see that this is the sum of a kernel which has the form considered in Proposition \ref{Kernelsmooth} or $\log\eps$ times a kernel of this form. Hence this is also Fr\'echet smooth in $(\eps,\theta)$ (resp.\ $\log\eps$ times a smooth function) by Proposition \ref{Kernelsmooth}. As $f_1(0)=f_2(0)=0$, we see that the kernel in \eqref{remainderkernel} and its first derivative in $\eps$ are $0$ at $\eps=0$. Hence by the smoothness, the operator norm of the associated linear map is controlled by  $ \eps^2|\log\eps|$ and the norm of the derivative is bounded by $ \eps|\log\eps|$.\end{proof}

\begin{lemma}\label{L15}
The operator $\K_{\theta,\eps}$ is invertible modulo constants for $(\|\theta\|_{H^k},\eps)\in \mathcal{M}$ sufficiently small, in the sense that for any $g\in H^k(\partial B)$ and every $c\in \R $, there exists a unique function $f\in H^{k-1}(\partial B)$ such that
\begin{align}
\K_{\theta,\eps} f -g=\const,\quad \int_{\partial B} m_{\theta} f\dx=c.\label{int eq L15}
\end{align}
Moreover, there holds the continuity estimate
\begin{equation}
\|f\|_{H^{k-1}} \lesssim \|g\|_{H^k}+|c|.\label{est inv}
\end{equation}

\noindent The unknown constant in the first equation in \eqref{int eq L15} is at most of the order $|\log\eps|(\norm{g}_{H^k}+|c|)$.

Furthermore, for a fixed  $c$ and any smooth family $g=g_{\theta,\eps}\in H^k(\partial B)$, the function $f=f_{\theta,\eps}$ is continuously Fr\'echet differentable in $(\theta,\eps)$ for small $(\theta,\eps)\neq 0$ and it holds that 
\begin{align*}
    \|\mathrm{D}_{\eps} f\|_{H^{k-1}}&  \lesssim |\log \eps| (\|g\|_{H^{k}}+|c|)+\|\mathrm{D}_{\eps} g\|_{H^k},\\
    \|\la \mathrm{D}_{\theta}f, \delta \theta\ra\|_{H^{k-1}} & \lesssim \left(1+ |\log \eps| \|\theta\|_{H^k}\right)\|\delta\theta\|_{H^k} (\|g\|_{H^{k}}+|c| )+\|\la \mathrm{D}_{\theta} g,\delta \theta\ra \|_{H^k}.
\end{align*}
\end{lemma}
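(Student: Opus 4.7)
The strategy is to treat this as a joint $(\theta,\eps)$-perturbation of Lemma \ref{L10}. The conceptual observation that makes this work is that although $\mathcal{K}_{\theta,\eps}$ has operator norm of order $|\log\eps|$, most of this singularity lives in the constant $\frac{1}{2\pi}(\log(1/\eps)+\log 8 -2)$ appearing in the Lemma \ref{L5} expansion, which is a $y$-independent kernel. Modulo constants, $\mathcal{K}_{\theta,\eps}$ is a genuine perturbation of $\widetilde{\mathcal{K}}_{\theta,0}$ whose size is controlled by $\eps|\log\eps| + |\log\eps|(\eps^2+\|\theta\|_{H^k}^2)$, and in particular is small for $(\theta,\eps)\in\M$ with $\eps$ small.

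I would introduce the map
$$\mathcal{J}_{\theta,\eps}: H^{k-1}(\partial B) \to \bigl(H^k(\partial B)/\R\bigr) \times \R, \quad \mathcal{J}_{\theta,\eps}(f) = \left(\mathcal{K}_{\theta,\eps} f - g,\ \int_{\partial B} m_\theta f\, d\Ha^1 - c\right),$$
and follow the IFT argument of Lemma \ref{L10}: at $\eps=0$, Lemma \ref{L10} itself supplies a two-sided inverse of $\mathcal{J}_{\theta,0}$; a standard Neumann-series argument based on the perturbation bound above then extends invertibility to small $\eps>0$ and yields $f_{\theta,\eps}$ with the basic estimate $\|f_{\theta,\eps}\|_{H^{k-1}}\lesssim \|g\|_{H^k}+|c|$.

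To bound the unknown constant $\nu:=\mathcal{K}_{\theta,\eps}f - g$, I would simply average the equation $\nu=\mathcal{K}_{\theta,\eps}f(y)-g(y)$ over $\partial B$ to get
$$|\nu| = \frac{1}{2\pi}\left|\int_{\partial B}\bigl(\mathcal{K}_{\theta,\eps}f - g\bigr)\, d\Ha^1\right| \lesssim \|\mathcal{K}_{\theta,\eps}f\|_{H^k}+\|g\|_{H^k} \lesssim |\log\eps|\bigl(\|g\|_{H^k}+|c|\bigr),$$
where the last inequality uses the operator norm bound of Lemma \ref{bd Ktheta} together with the $\|f\|_{H^{k-1}}$-estimate just established.

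For the Fr\'echet derivative bounds (valid for $(\theta,\eps)\neq 0$), I would differentiate the identity $\mathcal{K}_{\theta,\eps} f_{\theta,\eps} = g_{\theta,\eps}+\nu$ directly, giving
$$\mathcal{K}_{\theta,\eps}(\mathrm{D}_\eps f) = \mathrm{D}_\eps g - (\mathrm{D}_\eps \mathcal{K}_{\theta,\eps})\, f + \mathrm{D}_\eps \nu, \qquad \int m_\theta\, \mathrm{D}_\eps f\, d\Ha^1 = 0,$$
and solve for $\mathrm{D}_\eps f$ by applying the already-established inversion of $\mathcal{J}_{\theta,\eps}$ together with the constant estimate. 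The main obstacle is that Lemma \ref{bd Ktheta} only gives the crude bound $\|\mathrm{D}_\eps\mathcal{K}_{\theta,\eps}\|_{H^{k-1}\to H^k}\lesssim 1/\eps$, which would produce a suboptimal derivative estimate. The resolution is precisely the observation used at the outset: inspection of the Lemma \ref{L5} expansion shows that modulo constants $\|\mathrm{D}_\eps\mathcal{K}_{\theta,\eps}\|\lesssim |\log\eps|$, because the offending piece $\log(1/\eps)/(2\pi)$ is a constant kernel whose $\eps$-derivative $-1/(2\pi\eps)$ is likewise a constant kernel and therefore contributes nothing to $\mathcal{J}$-valued quantities. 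Combining this improvement with the inversion yields $\|\mathrm{D}_\eps f\|_{H^{k-1}}\lesssim |\log\eps|(\|g\|_{H^k}+|c|)+\|\mathrm{D}_\eps g\|_{H^k}$, and the $\theta$-derivative bound is obtained analogously from the $\theta$-part of Lemma \ref{bd Ktheta} and Lemma \ref{DeriNormal}.
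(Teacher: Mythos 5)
Your proposal is correct and follows essentially the same route as the paper: a Neumann-series/IFT perturbation of Lemma \ref{L10} using the smallness of $\K_{\theta,\eps}-\widetilde\K_{\theta,0}$ modulo constants on $\M$, the averaging bound for the unknown constant, and then differentiating the equation and re-inverting, with the key observation (also the paper's) that the $O(1/\eps)$ piece $-\tfrac{1}{2\pi\eps}m_\theta(\tilde y)$ of $\mathrm{D}_\eps K_{\theta,\eps}$ is constant in $y$ and is absorbed into the modded-out constant. The only small point left implicit in your sketch of the $\theta$-derivative is that differentiating the constraint $\int_{\partial B} m_\theta f\,\mathrm{d}\Ha^1=c$ produces the nonzero datum $-\int_{\partial B}\la \mathrm{D}_\theta m_\theta,\delta\theta\ra f\,\mathrm{d}\Ha^1$ (controlled via Lemma \ref{DeriNormal}), and that one must again absorb the constant-in-$y$, $O(|\log\eps|)$ part of $\la\mathrm{D}_\theta K_{\theta,\eps},\delta\theta\ra$ rather than use the crude bound of Lemma \ref{bd Ktheta}, in order to reach the stated $\left(1+|\log\eps|\,\|\theta\|_{H^k}\right)$ prefactor.
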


\begin{proof}It is enough to consider the case $c\not=0$, because the opposite case can be deduced via a compactness argument in the limit $c\to 0$. Then, upon rescaling, we may assume that $c=1$.

The statement of the lemma follows by a perturbation argument: We have seen in Lemma  \ref{L10} that $\widetilde\K_{\theta,0}$ is invertible. Moreover, by virtue of Lemma \ref{L5} it holds that
\begin{equation}\label{201}
\K_{\theta,\eps} f = \widetilde \K_{\theta,0} f + \mathcal{R} f+ \const ,
\end{equation}
where $\mathcal{R} f$ is a remainder term with kernel
\begin{align*}
R(y,\tilde y) &= \frac{m_{\theta}(\tilde y)}{2\pi}(s_2-1)\log \frac1{\sqrt{s_1}} + \frac{s_2m_{\theta}(\tilde y)}{2\pi}\left(\log s_2  +g\left(\frac{\eps^2 s_1}{s_2^2}\right)\right)\\
&\quad + \frac{1}{2\pi}\left(s_2m_{\theta}(\tilde y)-1\right)\left(\log \frac1\eps +\log 8-2\right),
\end{align*}
and $s_1 $ and $s_2 $ are given as in \eqref{s1s2}. Notice that all higher-order terms are absorbed into the constant in \eqref{201}. Arguing as in Lemma \ref{L5}, we see that the remainder satisfies the bound
\[
\|\mathcal{R} f\|_{H^k/\R} \lesssim |\log \eps|(\eps +\|\theta\|_{H^k}) \|f\|_{H^{k-1}}\lesssim \eps^{\frac{\ell}{2}}\norm{f}_{H^{k-1}},
\]

\noindent where we made use of the constraint $\norm{\theta}_{H^k}\lesssim \eps^\ell$ in the definition of $\mathcal{M}$ (cf.\ \eqref{def M}).
The integral constraint in our rescaled problem is the same as in Lemma \ref{L10}, and hence a Neumann series argument (with respect to the spaces $H^{k-1}(\partial B)$ and $H^k(\partial B)/\R $) translates the invertibility from $\widetilde \K_{\theta,0}$ to $\K_{\theta,\eps}$ and also shows differentiability away from $0$.  In addition, we have the estimates
\[
\|f\|_{H^{k-1}} \lesssim \|g\|_{H^k}+1.
\]
In order to derive estimates on the derivatives with respect to $\eps$, we notice that, due to the kernel expansion of Lemma \ref{L5}, it holds that 
\begin{align*}
\mathrm{D}_{\eps} K_{\theta,\eps}(y,\tilde y) =& -\frac1{2\pi\eps} m_{\theta}(\tilde y) + \frac{y_1+\tilde y_1}{4\pi} \left(\log \frac1{|y-\tilde y|} +\log (8)-2 +\log\frac1{\eps}\right) \\
&+\mathrm{D}_{\eps}R_{\theta,\eps}(y,\tilde y),
\end{align*}
where we have slightly redefined the remainder $R_{\theta,\eps}$ without changing its properties.  Differentiating the boundary problem \eqref{int eq L15} then gives
\[
\mathcal{K}_{\theta,\eps} \mathrm{D}_{\eps} f + \mathrm{D}_{\eps} \K_{\theta,\eps} f  -\mathrm{D}_{\eps} g= \const,
\]
and according to the 
above expansion of the kernel, the $O(1/\eps)$ term can be absorbed into the constant on the right-hand side since it is independent of $y$. Making use of the a priori estimates \eqref{est inv} and the bounds from Lemma \ref{L5} for the remainder then gives
\begin{align}
    \|\mathrm{D}_{\eps} f\|_{H^{k-1} }&\lesssim |\log\eps|\|f\|_{H^{k-1}} +  \left|\int_{\partial B} m_\theta\mathrm{D}_\eps f\dd \Ha^1\right|  +\|\mathrm{D}_{\eps} g\|_{H^k}  \\
    &\lesssim |\log \eps|\left(\|g\|_{H^k} +1\right)+\|\mathrm{D}_{\eps} g\|_{H^k},
\end{align}
where we used that $D_\eps\int_{\de B} m_\theta f\dd x=0$.

 The argumentation for the $\theta$-derivative is very similar. This time, the leading order $O(|\log\eps|)$ term can be absorbed into the constant.
 Using that \begin{align}
\mel\left|\int_{\partial B}m_\theta \la\mathrm{D}_\theta  f,\delta\theta\ra\dd\Ha^1\right|=\left|\int_{\de B}\la\mathrm{D}_\theta m_\theta,\delta\theta\ra f\dd\mathcal{H}^1\right|\\
&\lesssim \norm{f}_{H^{k-1}}\norm{\delta\theta}_{H^k}\lesssim (1+\norm{g}_{H^k})\norm{\delta\theta}_{H^k},
 \end{align}
 since $\mathrm{D}_\theta c=0$, we have that 
\begin{align*}
\mel \|\la \mathrm{D}_{\theta}f,\delta\theta\ra\|_{H^{k-1}}\\
&\lesssim \left|\int_{\partial B}m_\theta \la\mathrm{D}_\theta f,\delta\theta\ra\dd\Ha^1\right|  + \left(\eps  + \|\theta\|_{H^k} \right) |\log \eps| \|f\|_{H^{k-1}}\|\delta \theta\|_{H^k}+\|\la \mathrm{D}_{\theta} g,\delta \theta\ra \|_{H^k}  \\
&\lesssim \left(1+|\log \eps|\|\theta\|_{H^k}\right) (\|g\|_{H^k}+1 )\|\delta \theta\|_{H^k}+\|\la \mathrm{D}_{\theta} g,\delta \theta\ra \|_{H^k}.
\end{align*}
This finishes the proof.
\end{proof}

\subsection{The exact solution $\mu_{\theta,\eps}$}\label{subsec mu}
In this section, we finally conclude the existence of a continuously differentiable $\mu_{\theta,\eps}$ in Proposition \ref{mu diffable}, where $\mu_{\theta,\eps}$ is the solution of the boundary equation
 \begin{align}
\K_{\theta,\eps}\mu_{\theta,\eps}=\gamma+\frac{W}{2}(1+\eps(\chi_\theta)_1)^2,\label{des eq}
\end{align}
satisfying the integral constraint
  \begin{align}
\int_{\de B}m_\theta\mu_{\theta,\eps}\dx=1.
\end{align}
We recall that, at this point, $W$ and $\gamma$ are not defined yet.  Instead, identifying them appropriately will be part of the problem.

Anticipating that the leading order contribution of $\mu_{\theta,\eps}$ is given by the solution $\tilde \mu_{\theta,0}$ to the approximate problem studied in Subsection \ref{SS1} above, we make for any $S\in\R$ the ansatz
\begin{equation}
\label{74}
\mu_{\theta,\eps} (S) = \tilde{\mu}_{\theta,0} + \eps S (n_{\theta}\cdot e_1)\circ \chi_{\theta} +\mathcal{E}_{\theta,\eps}(S),
\end{equation}

\noindent where $\mathcal{E}_{\theta,\eps}(S)$ shall be an error term that is (more or less) of second order. The precise form of the first order in $\eps$ contribution is motivated by the corresponding term on the right-hand side of \eqref{des eq}.

We will construct $W$ and $\gamma$ as functions of $S$ by matching their asymptotics with that of the image of this ansatz under $\K_{\theta,\eps}$.
The speed $W$ is determined through the following lemma.

\begin{lemma}
    \label{L13}
    Let $(\theta,\eps)\in \mathcal{M}$ be small. Then, for any $S\in \R$, there exists a constant $W\in \R $ such that
    \begin{equation}
        \label{def 1st error}
        \begin{aligned}
       \mel \mathcal{K}_{\theta,\eps}\left(\tilde \mu_{\theta,0}  +\eps S (n_{\theta}\cdot e_1)\circ \chi_{\theta}\right)\\
      &  = \frac1{2\pi}\left(\log\frac1\eps +\log 8-2\right)  + \eps W \chi_{\theta}\cdot e_1 +\frac12 \eps^2 W(\chi_{\theta}\cdot e_1)^2 + \tilde{\mathcal{E}}_{\theta,\eps}(S) ,
        \end{aligned}
    \end{equation}
    where $\widetilde{\mathcal{E}}_{\theta,\eps}(S)$ is a smooth error term satisfying
\begin{align}
\|\widetilde{\mathcal{E}}_{\theta,\eps}(S)\|_{H^k}&\lesssim 
    |\log \eps|
    \left(\eps^2 +\|\theta\|^2_{H^k}\right) (1+|S|) ,\label{207}\\
   \|\la \mathrm{D}_{\theta}\widetilde{\mathcal{E}}_{\theta,\eps}(S),\delta \theta\ra \|_{H^k} &\lesssim |\log\eps| (\eps + \|\theta\|_{H^k})\|\delta \theta\|_{H^k}(1+|S|),\label{208}\\
    \|\mathrm{D}_{\eps} \widetilde{\mathcal{E}}_{\theta,\eps}(S)\|_{H^k}&\lesssim \left(\eps|\log \eps|  + \frac1{\eps}\|\theta\|_{H^k}^2\right)(1+|S|)\label{209}.
    \end{align}
\end{lemma}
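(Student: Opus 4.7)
I would substitute the kernel expansion of Lemma~\ref{L5} directly into the LHS of \eqref{def 1st error}, read off the naturally occurring constant mode of size $|\log\eps|$ and the $y_1$-mode of order $\eps|\log\eps|$, and choose the scalar $W$ to absorb the latter. The remainder is then a finite sum of explicit pieces whose $H^k$-norms and Fr\'echet derivatives are directly controlled by the estimates already established in Lemmas~\ref{L5}, \ref{CapMaximal}, \ref{DeriNormal} and Proposition~\ref{Lem:muDiffable}.

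\textbf{Extracting the constant and the $\eps y_1$ contribution.} The leading kernel $\tfrac1{2\pi}(\log\tfrac1\eps+\log 8-2)$ applied to $\tilde\mu_{\theta,0}$ produces this constant times $\int\tilde\mu_{\theta,0}\dd\Ha^1$, which equals $1+O(\|\theta\|_{H^k}^2)$: indeed $\int m_\theta\tilde\mu_{\theta,0}=1$, $m_\theta-1\approx\theta$, $\tilde\mu_{0,0}=\tfrac1{2\pi}$, and the mean constraint in \eqref{62} yields $|\!\int\theta|\lesssim\|\theta\|^2$ via \eqref{second order int}. The $\widetilde K_{\theta,0}$-part of the kernel contributes $\widetilde{\mathcal K}_{\theta,0}\tilde\mu_{\theta,0}$, which is a constant of size $O(\|\theta\|^2)$ by Lemma~\ref{CapMaximal}. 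For the $\eps y_1$ mode, the first-order-in-$\eps$ kernel $\tfrac{\eps(y_1+\tilde y_1)}{4\pi}(\log\tfrac1{|y-\tilde y|}+\log 8-1+\log\tfrac1\eps)$ applied to $\tilde\mu_{0,0}=\tfrac1{2\pi}$ produces, via the Fourier identity \eqref{64}, the term $\tfrac{\eps y_1}{4\pi}(\log\tfrac1\eps+\log 8-\tfrac12)$. Applied to $\eps S(n_\theta\cdot e_1)\circ\chi_\theta=\eps S y_1+O(\eps|S|\|\theta\|)$, the leading kernel contributes $\eps S\,\widetilde{\mathcal K}_{0,0}y_1=\tfrac{\eps S}{2}y_1$ by Lemma~\ref{K inv}; the constant-mode contribution here is $O(\eps|\log\eps||S|\|\theta\|^2)$ because $\int(n_\theta\cdot e_1)\circ\chi_\theta\dd\Ha^1=O(\|\theta\|^2)$ (rewrite as $-\int(m_\theta-1)(n_\theta\cdot e_1)\circ\chi_\theta\dd\Ha^1$ using that $\int_{\partial\Omega_\theta}n\cdot e_1\dd\Ha^1=0$).

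\textbf{Defining $W$ and closing the estimates.} I set
\[
W:=\frac1{4\pi}\Bigl(\log\tfrac1\eps+\log 8-\tfrac12\Bigr)+\frac{S}{2},
\]
which absorbs the bare $\eps y_1$-coefficient and agrees with the leading asymptotics in~\eqref{325}. Since $\chi_\theta\cdot e_1=(1+\theta)y_1$, the discrepancy $\eps W(\chi_\theta\cdot e_1-y_1)=\eps W\theta y_1$ is bounded in $H^k$ by $\eps|\log\eps|(1+|S|)\|\theta\|$, and $\tfrac12\eps^2W(\chi_\theta\cdot e_1)^2$ by $\eps^2|\log\eps|(1+|S|)$; both fit into $|\log\eps|(\eps^2+\|\theta\|^2)(1+|S|)$ by Young's inequality, yielding \eqref{207}. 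The residual pieces—the remainder kernel $\mathcal R_{\theta,\eps}$ of Lemma~\ref{L5} applied to $\tilde\mu_{\theta,0}+\eps S(n_\theta\cdot e_1)\circ\chi_\theta$, and the first-order-in-$\theta$ kernel pieces acting on $\tilde\mu_{\theta,0}-\tilde\mu_{0,0}$—are absorbed similarly. For the differentiability bounds \eqref{208}--\eqref{209}, I differentiate the decomposition term by term using Proposition~\ref{Lem:muDiffable}, Lemma~\ref{DeriNormal}, and the derivative estimates of Lemma~\ref{L5}; since $W$ is independent of $\theta$ under the present definition, only the residual bookkeeping contributes to $\partial_\theta\tilde{\mathcal E}$, while $\partial_\eps W\sim 1/\eps$ generates the $\tfrac1\eps\|\theta\|^2$ scale in~\eqref{209} through $\partial_\eps(\eps W\theta y_1)$.

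\textbf{Main obstacle.} The chief subtlety is to prevent any stray linear-in-$\theta$ term of size $|\log\eps|\|\theta\|_{H^k}$ from surviving into $\tilde{\mathcal E}$, which would violate~\eqref{207}. Candidates arise from three sources: the first-order-in-$\theta$ kernel piece $\tfrac{\theta(\tilde y)}{2\pi}(\log\tfrac1{|y-\tilde y|}+\log 8-2+\log\tfrac1\eps)$ of Lemma~\ref{L5} acting on $\tilde\mu_{0,0}$, the $-\tfrac{\theta(y)}{4\pi}$ kernel multiplied by $\int\tilde\mu_{0,0}$, and the correction $\tilde\mu_{\theta,0}-\tilde\mu_{0,0}$ hit by the leading kernel. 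The required cancellation of the three $\widetilde{\mathcal K}_{0,0}\theta$-type contributions into second-order pieces relies on two mechanisms: the manifold orthogonality constraints $\la\theta,1\ra,\,\la\theta,y_1\ra=O(\|\theta\|^2)$ from~\eqref{62} (which kill the $\log\tfrac1\eps$-amplified constant and $y_1$ projections) together with the Fourier diagonalization of $\widetilde{\mathcal K}_{0,0}$ from Lemma~\ref{K inv}, and the precise first-order formulas for $\widetilde{\mathcal K}_{\theta,0}$ and $\tilde\mu_{\theta,0}$ in Lemma~\ref{L7} and Proposition~\ref{Lem:muDiffable}. Lemma~\ref{CapMaximal} encodes exactly this cancellation for $\widetilde{\mathcal K}_{\theta,0}\tilde\mu_{\theta,0}$, and a parallel explicit Fourier computation discharges the remaining linear-in-$\theta$ pieces.
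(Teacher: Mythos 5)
Your proposal is correct and follows essentially the same route as the paper's proof: you split $\K_{\theta,\eps}$ into the constant part $\frac1{2\pi}(\log\frac1\eps+\log 8-2)$, the $\eps\frac{y_1+\tilde y_1}{4\pi}(\cdots)$ part and the $\widetilde\K_{\theta,0}$ part, use Lemma \ref{CapMaximal} together with the constraints \eqref{62} and the Fourier identities of Lemma \ref{K inv} to kill the linear-in-$\theta$ and $\log\frac1\eps$-amplified modes, arrive at the constant plus $\frac{\eps}{4\pi}(\log\frac1\eps+\log 8-\frac12)y_1+\frac{\eps S}{2}y_1$, and take $W=\frac1{4\pi}(\log\frac1\eps+\log 8-\frac12)+\frac{S}{2}$ with the $y_1\mapsto\chi_\theta\cdot e_1$ replacement and the $\frac12\eps^2W(\chi_\theta\cdot e_1)^2$ term pushed into the error, which is exactly the paper's Steps 1--3. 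The only slip is in the derivative bookkeeping: $\mathrm{D}_\eps(\eps W\,\theta y_1)=(W-\frac1{4\pi})\theta y_1$ is of size $|\log\eps|\,\|\theta\|_{H^k}$ and does not produce the $\frac1\eps\|\theta\|_{H^k}^2$ scale in \eqref{209}; that scale instead comes from $\mathrm{D}_\eps R_{\theta,\eps}$ in Lemma \ref{L5} and from $\mathrm{D}_\eps(\K_{\theta,\eps}-\widetilde\K_{\theta,0})\lesssim\eps^{-1}$ acting on the $O(\|\theta\|_{H^k}^2)$ part of $\tilde\mu_{\theta,0}$, which is how the paper obtains it and is covered by the lemmas you cite, so the misattribution does not affect the argument.
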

Our proof actually reveals that 
\begin{align}
W=\frac{1}{4\pi}\left(\log 8-\frac{1}{2}+\log\frac{1}{\eps}\right)+\frac{1}{2}S\label{def W}
\end{align}
is an admissible choice.

We postpone the proof of this lemma for a moment.

Inserting the expansions  \eqref{74} and \eqref{def 1st error} into the boundary equation \eqref{des eq}, we obtain the relation  
\begin{align}\mathcal{K}_{\theta,\eps} \mathcal{E}_{\theta,\eps}(S)  + \widetilde{\mathcal{E}}_{\theta,\eps}(S) =\const.\label{def E}
\end{align} 
Moreover, as we have fixed the circulations in \eqref{320}  and  \eqref{def mu0} and because
\begin{align}
\int_{\partial B} m_\theta(n_{\theta}\cdot e_1)\circ \chi_{\theta}\dd\Ha^1 =\int_{\de\Omega_\theta}n_\theta\cdot e_1\dd \Ha^1=0 ,
\end{align}
we also have the integral condition
\begin{align}
\int_{\de B}m_\theta  \mathcal{E}_{\theta,\eps}(S)\dd \Ha^1=0.\label{def E 2}
\end{align}
Using this insight, we will obtain smallness and continuous differentiability of $\mathcal{E}_{\theta,\eps}$ via  Lemma \ref{L15}.

In fact, given $\widetilde{\mathcal{E}}_{\theta,\eps}$ from Lemma \ref{L13} and determining $\mathcal{E}_{\theta,\eps}$ via \eqref{def E} and \eqref{def E 2}, the formula in \eqref{74} can be actually   considered as a \emph{definition} of $\mu_{\theta,\eps}$. The desired boundary equation \eqref{des eq} will automatically hold then for some implicitly given $\gamma$, whose asymptotics we calculate below in Lemma \ref{asymp gamma}.

We stress that $\mu$, $W$, $\gamma$ and $\mathcal{E}_{\theta,\eps}$ are all affine functions in the reduction parameter $S$.

\begin{proposition}\label{mu diffable}
This construction does indeed yield a well-defined $\mu_{\theta,\eps}(S)\in H^k(\de B)$ for $(\theta,\eps)\in \mathcal{M}$  sufficiently small. The smallness condition is uniform in $S$. 

Moreover,  $\mu =\mu_{\theta,\eps}(S)$ is continuously Fr\'echet differentiable in the joint variable $(\theta,\eps,S)\in \mathcal{M}\times \R$ in a neighborhood of $(0,0)\times \R$, and it has   the same derivative in $\theta$ at $(\theta,\eps)=(0,0)$ as $\tilde{\mu}_{\theta,0}$.

Finally, the error functional satisfies the following estimates
\begin{align}
 \norm{\mathcal{E}_{\theta,\eps}(S)}_{H^{k-1}}&\lesssim (1+|S|)|\log\eps|(\eps^2+\norm{\theta}_{H^k}^2),\label{est error}\\
 \norm{\mathrm{D}_{ \eps}\mathcal{E}_{\theta,\eps}(S)}_{H^{k-1}}& \lesssim (1+|S|)|\log\eps|\left(\eps+\frac{\norm{\theta}_{H^k}^2}{\eps} \right),\label{est Derror_eps}\\
 \norm{\la \mathrm{D}_{\theta}\mathcal{E}_{\theta,\eps}(S),\delta \theta\ra }_{H^{k-1}}& \lesssim (1+|S|)|\log\eps|\left(\eps+ \norm{\theta}_{H^k} \right)\|\delta \theta\|_{H^k} .\label{est Derror_theta}
\end{align}
\end{proposition}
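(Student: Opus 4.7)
The plan is to \emph{define} $\mathcal{E}_{\theta,\eps}(S)$ via the equations \eqref{def E}, \eqref{def E 2} by invoking Lemma \ref{L15} with $g = -\widetilde{\mathcal{E}}_{\theta,\eps}(S)$ and $c = 0$, and then to take \eqref{74} as the definition of $\mu_{\theta,\eps}(S)$. By construction, the boundary identity \eqref{des eq} is then automatically satisfied, with the unknown constant on its right-hand side serving as the definition of $\gamma = \gamma(S)$. Since Lemma \ref{L13} guarantees that $\widetilde{\mathcal{E}}_{\theta,\eps}(S)$ is smooth in $(\theta,\eps)$ and affine in $S$, Lemma \ref{L15} yields existence, uniqueness and smoothness of $\mathcal{E}_{\theta,\eps}(S) \in H^{k-1}(\partial B)$ for small $(\theta,\eps) \in \mathcal{M}$ with $\eps>0$; the smallness threshold is independent of $S$ thanks to the affine dependence.

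The error estimates \eqref{est error}--\eqref{est Derror_theta} then follow by combining the a priori bounds of Lemma \ref{L15} with the bounds on $\widetilde{\mathcal{E}}$ from Lemma \ref{L13}. Bound \eqref{est error} is immediate from $\|\mathcal{E}_{\theta,\eps}(S)\|_{H^{k-1}} \lesssim \|\widetilde{\mathcal{E}}_{\theta,\eps}(S)\|_{H^k}$ together with \eqref{207}. For \eqref{est Derror_eps}, Lemma \ref{L15} produces
\[
\|\mathrm{D}_\eps \mathcal{E}_{\theta,\eps}(S)\|_{H^{k-1}} \lesssim |\log\eps|\,\|\widetilde{\mathcal{E}}\|_{H^k} + \|\mathrm{D}_\eps \widetilde{\mathcal{E}}\|_{H^k};
\]
plugging in \eqref{207} and \eqref{209}, the apparently dangerous $|\log\eps|^2(\eps^2 + \|\theta\|_{H^k}^2)$ contribution is absorbed into the claimed $|\log\eps|(\eps + \|\theta\|_{H^k}^2/\eps)$ because $\eps|\log\eps| \to 0$, so that $|\log\eps|\eps^2 \lesssim \eps$ and $|\log\eps|\|\theta\|_{H^k}^2 \lesssim \|\theta\|_{H^k}^2/\eps$. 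The $\theta$-derivative estimate \eqref{est Derror_theta} is handled analogously using \eqref{208}.

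Joint differentiability of $\mu_{\theta,\eps}(S)$ in $(\theta,\eps,S)$ on the interior $\eps>0$ then follows from the smoothness of the three building blocks of the ansatz \eqref{74}: $\tilde\mu_{\theta,0}$ (Proposition \ref{Lem:muDiffable}), $(n_\theta\cdot e_1)\circ\chi_\theta$ (Lemma \ref{DeriNormal}), and the $\mathcal{E}_{\theta,\eps}(S)$ just constructed. For continuous extension of the derivative to the boundary $\eps = 0$, the estimates \eqref{est Derror_eps} and \eqref{est Derror_theta} show $\mathrm{D}\mathcal{E}_{\theta,\eps}(S) \to 0$ as $(\theta,\eps) \to (0,0)$, using crucially the constraint $\|\theta\|_{H^k} \leq \eps^\ell$ from \eqref{def M}. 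Since the middle term $\eps S (n_\theta \cdot e_1)\circ\chi_\theta$ vanishes together with its $\theta$-derivative at $\eps=0$, this yields $\mathrm{D}_\theta \mu_{\theta,\eps}(S)\big|_{(0,0)} = \mathrm{D}_\theta \tilde\mu_{\theta,0}\big|_{\theta=0}$, as asserted.

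The principal obstacle is the bookkeeping of logarithmic factors. Each derivative inferred through Lemma \ref{L15} inflates the a priori bound by a spurious $|\log\eps|$ that is prima facie incompatible with the targeted estimates; absorbing this factor hinges on the quadratic smallness of $\widetilde{\mathcal{E}}$ in $(\eps,\|\theta\|_{H^k})$, traded against a $1/\eps$ loss, and on the condition $\ell > 1/2$ in \eqref{def M}, which is precisely what makes this trade admissible and ultimately forces the specific form of the manifold $\mathcal{M}$.
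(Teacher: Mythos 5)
Your proposal is correct and follows essentially the same route as the paper: $\mathcal{E}_{\theta,\eps}(S)$ is defined by applying Lemma \ref{L15} to \eqref{def E}--\eqref{def E 2} (with $g=-\widetilde{\mathcal{E}}_{\theta,\eps}(S)$, $c=0$), the bounds \eqref{est error}--\eqref{est Derror_theta} come from combining the estimates of Lemmas \ref{L15} and \ref{L13} with the absorption of the extra $|\log\eps|$ via $\eps|\log\eps|\lesssim 1$, and differentiability at $(0,0)$ with the claimed $\theta$-derivative follows from the decay of these bounds under the constraint $\|\theta\|_{H^k}\le\eps^{\ell}$, $\ell>1/2$. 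No gaps beyond the level of detail the paper itself leaves implicit.
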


\begin{proof}[Proof of Proposition \ref{mu diffable} assuming Lemma \ref{L13}]
%The well-definedness is immediate from the construction and from Lemma \ref{L15}. The uniform

It follows from Lemma \ref{L13}, that the equation \eqref{def 1st error} does indeed hold. Notice that the   error $\mathcal{E}_{\theta,\eps}(S)$ is well-defined for small enough $(\theta,\eps)$, uniformly in $S$.
In particular, this shows the well-definedness of $\mu_{\theta,\eps}(S)$  and the uniformity of the smallness condition in $S$ follows from the fact that everything is affine in $S$.

Moreover, by Lemma \ref{L15} (applied to \eqref{def E} and \eqref{def E 2}) and \eqref{207}
  we see that
  \begin{align}
\norm{\mathcal{E}_{\theta,\eps}(S)}_{H^{k-1}}\lesssim \|\widetilde{\mathcal{E}}_{\theta,\eps}(S) \|_{H^k}\lesssim |\log\eps|(\eps^2+\norm{\theta}_{H^k}^2)(1+|S|),
\end{align}
which is \eqref{est error}.  To obtain the bounds on the derivatives, we argue similarly. For instance, again via Lemma \ref{L15} and \eqref{207} and \eqref{209}
\begin{align*}
    \|\mathrm{D}_{\eps} \mathcal{E}_{\theta,\eps}\|_{H^{k-1}} \lesssim |\log \eps|  \| \widetilde{\mathcal{E}}_{\theta,\eps}\|_{H^k}+\|\mathrm{D}_{\eps} \widetilde{\mathcal{E}}_{\theta,\eps}\|_{H^k}  \lesssim |\log \eps|\left(\eps +\frac1{\eps}\|\theta\|_{H^k}^2\right)(1+|S|).
\end{align*}
This proves \eqref{est Derror_eps}. Estimate \eqref{est Derror_theta} is derived analogously.

It is thus clear from the definition, Proposition \ref{Lem:muDiffable} and Lemma \ref{DeriNormal} that $\mu_{\theta,\eps}(S)$ is continuously Fr\'echet differentiable (in fact even smooth) except possibly at $(\theta,\eps)=(0,0)$, as all involved terms are continuously differentiable away from $(0,0)$. Differentiability in $(0,0)$ is a consequence of the error bounds \eqref{est Derror_eps} and \eqref{est Derror_theta} and our choice of $\ell$ in the definition of $\mathcal{M}$ in \eqref{def M}.
\end{proof}

We collect some trivial consequences.

\begin{corollary}
We have the following estimates for small enough $(\theta,\eps)\in \mathcal{M}$: \begin{align}
 \norm{\mathrm{D}_{\eps}\mu_{\theta,\eps}(0)}_{H^k}&\lesssim  |\log\eps|\left(\eps+\frac{\norm{\theta}_{H^k}^2}{\eps} \right)\label{cor est 1_eps},\\
 \norm{\la \mathrm{D}_{\theta}\mu_{\theta,\eps}(0),\delta \theta\ra }_{H^k}&\lesssim \|\delta \theta\|_{H^k}\label{cor est 1_theta},\\
 \norm{\mu_{\theta,\eps}(S)-\mu_{\theta,\eps}(0)-\eps S(n_\theta\cdot e_1)\circ\chi_\theta}_{H^k}&\lesssim |S||\log\eps|(\eps^2+\norm{\theta}^2)\label{cor est 2},\\
 \norm{\mu_{\theta,\eps}(S)-\mu_{\theta,\eps}(0)}&\lesssim \eps|S|,\label{cor est 3}\\
 \norm{\mathrm{D}_{ \eps}\bigl(\mu_{\theta,\eps}(S)-\mu_{\theta,\eps}(0)-\eps S(n_\theta\cdot e_1)\circ\chi_\theta\bigr)}_{H^k}&\lesssim |S||\log\eps|\left(\eps+\frac{\norm{\theta}_{H^k}^2}{\eps} \right),\label{cor est 4eps}\\
 \norm{\la \mathrm{D}_{\theta }\bigl(\mu_{\theta,\eps}(S)-\mu_{\theta,\eps}(0)-\eps S(n_\theta\cdot e_1)\circ\chi_\theta\bigr),\delta \theta\ra }_{H^k}&\lesssim |S||\log\eps|\left(\eps+ \norm{\theta}_{H^k}\right)\|\delta \theta\|_{H^k},\label{cor est 4theta}\\
 \norm{\mathrm{D}_{\eps}\bigl(\mu_{\theta,\eps}(S)-\mu_{\theta,\eps}(0)\bigr)}_{H^k}&\lesssim |S|,\label{cor est 5eps} \\
 \norm{\la \mathrm{D}_{\theta}\bigl(\mu_{\theta,\eps}(S)-\mu_{\theta,\eps}(0)\bigr),\delta \theta\ra }_{H^k}&\lesssim |S||\log\eps|(\eps+\norm{\theta}_{H^k})\|\delta \theta\|_{H^k}.\label{cor est 5theta}
\end{align}
Here, all derivatives are Fr\'echet derivatives.
\end{corollary}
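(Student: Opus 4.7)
The plan is to derive every inequality by combining the ansatz \eqref{74} with the error bounds in Proposition \ref{mu diffable}, exploiting the fact that all constructed quantities are affine in the reduction parameter $S$. Write $N_\theta := (n_\theta\cdot e_1)\circ \chi_\theta$ for brevity; by Lemma \ref{DeriNormal} the map $\theta \mapsto N_\theta$ is Fr\'echet smooth into $H^{k-1}(\partial B)$ with derivatives bounded independently of $\eps$, so that $\|N_\theta\|_{H^{k-1}}\lesssim 1$, $\|\mathrm{D}_\eps N_\theta\|=0$, and $\|\la \mathrm{D}_\theta N_\theta,\delta\theta\ra\|_{H^{k-1}}\lesssim \|\delta\theta\|_{H^k}$. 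Since $\tilde\mu_{\theta,0}$ does not depend on $\eps$ and since $\mathcal{E}_{\theta,\eps}(S)$ is affine in $S$ (as it solves the affine-in-$S$ linear system \eqref{def E}, \eqref{def E 2}), the ansatz yields the identity
\[
\mu_{\theta,\eps}(S)-\mu_{\theta,\eps}(0)-\eps S N_\theta = \mathcal{E}_{\theta,\eps}(S)-\mathcal{E}_{\theta,\eps}(0) = S\bigl(\mathcal{E}_{\theta,\eps}(1)-\mathcal{E}_{\theta,\eps}(0)\bigr),
\]
which is the key structural relation driving all estimates.

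First I would dispatch \eqref{cor est 1_eps} and \eqref{cor est 1_theta}. Using $\mu_{\theta,\eps}(0)=\tilde\mu_{\theta,0}+\mathcal{E}_{\theta,\eps}(0)$, the $\eps$-derivative reduces to $\mathrm{D}_\eps\mathcal{E}_{\theta,\eps}(0)$, whose norm is controlled by \eqref{est Derror_eps} specialised to $S=0$. For the $\theta$-derivative, combine Proposition \ref{Lem:muDiffable} (which gives $\|\la \mathrm{D}_\theta\tilde\mu_{\theta,0},\delta\theta\ra\|\lesssim\|\delta\theta\|_{H^k}$) with \eqref{est Derror_theta}; the latter is of lower order since it carries a factor $\eps+\|\theta\|_{H^k}$.

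Next, for \eqref{cor est 2}--\eqref{cor est 4theta} I would apply the three bounds \eqref{est error}, \eqref{est Derror_eps}, \eqref{est Derror_theta} to the affine combination $\mathcal{E}_{\theta,\eps}(1)-\mathcal{E}_{\theta,\eps}(0)$ and pull out the prefactor $|S|$ from the relation above. The factor $(1+|S|)$ in the proposition evaluates to a universal constant on $S\in\{0,1\}$, so the required $|S|$-dependence comes entirely from the $S$ in front of $\mathcal{E}_{\theta,\eps}(1)-\mathcal{E}_{\theta,\eps}(0)$. This gives \eqref{cor est 2} from \eqref{est error}, \eqref{cor est 4eps} from \eqref{est Derror_eps}, and \eqref{cor est 4theta} from \eqref{est Derror_theta}.

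Finally, the remaining estimates \eqref{cor est 3}, \eqref{cor est 5eps}, \eqref{cor est 5theta} are obtained by adding to the preceding three the contribution coming from $\eps S N_\theta$. Using the pointwise bounds on $N_\theta$ recorded above, we have $\|\eps S N_\theta\|\lesssim \eps|S|$, $\|\mathrm{D}_\eps(\eps S N_\theta)\|=\|S N_\theta\|\lesssim|S|$, and $\|\la\mathrm{D}_\theta(\eps S N_\theta),\delta\theta\ra\|\lesssim \eps|S|\|\delta\theta\|_{H^k}$. In each case the corresponding $\mathcal{E}$-contribution from \eqref{cor est 2}--\eqref{cor est 4theta} is subdominant thanks to the smallness of $|\log\eps|(\eps+\|\theta\|_{H^k}^2/\eps)$ enforced by the manifold constraint $\|\theta\|_{H^k}\le\eps^\ell$ with $\ell>1/2$ in \eqref{def M}. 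There is no real obstacle beyond careful bookkeeping; the whole corollary is a mechanical consequence of Proposition \ref{mu diffable} plus the affine-in-$S$ structure of the ansatz.
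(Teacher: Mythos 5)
Your overall strategy is the same as the paper's own (very terse) proof: everything is read off from the ansatz \eqref{74}, the affine dependence of $\mathcal{E}_{\theta,\eps}$ on $S$ (so that $\mu_{\theta,\eps}(S)-\mu_{\theta,\eps}(0)-\eps S(n_\theta\cdot e_1)\circ\chi_\theta=S\bigl(\mathcal{E}_{\theta,\eps}(1)-\mathcal{E}_{\theta,\eps}(0)\bigr)$), the error bounds \eqref{est error}, \eqref{est Derror_eps}, \eqref{est Derror_theta}, the smoothness of $(n_\theta\cdot e_1)\circ\chi_\theta$ from Lemma \ref{DeriNormal}, and Proposition \ref{Lem:muDiffable} for \eqref{cor est 1_theta}. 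With this bookkeeping, \eqref{cor est 1_eps}, \eqref{cor est 1_theta}, \eqref{cor est 2}, \eqref{cor est 3}, \eqref{cor est 4eps}, \eqref{cor est 4theta} and \eqref{cor est 5eps} are all handled correctly.

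The justification of the last estimate \eqref{cor est 5theta}, however, contains a step that fails as written. You claim that the $\mathcal{E}$-contribution, which by \eqref{cor est 4theta} is of size $|S|\,|\log\eps|\,(\eps+\|\theta\|_{H^k})\|\delta\theta\|_{H^k}$, is subdominant to the target $\eps|S|\|\delta\theta\|_{H^k}$ ``thanks to the smallness of $|\log\eps|(\eps+\|\theta\|_{H^k}^2/\eps)$''. The relevant ratio here is $|\log\eps|(\eps+\|\theta\|_{H^k})/\eps$, not $|\log\eps|(\eps+\|\theta\|_{H^k}^2/\eps)$: already $|\log\eps|\,\eps/\eps=|\log\eps|\to\infty$, and under the constraint $\|\theta\|_{H^k}\le\eps^{\ell}$ with $\ell\in(1/2,1)$ one has $|\log\eps|\,\|\theta\|_{H^k}/\eps\le\eps^{\ell-1}|\log\eps|\to\infty$. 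So the triangle inequality combined with \eqref{cor est 4theta} only yields the weaker bound $|S|\,|\log\eps|\,(\eps+\|\theta\|_{H^k})\|\delta\theta\|_{H^k}$ for the left-hand side of \eqref{cor est 5theta}, not $\eps|S|\|\delta\theta\|_{H^k}$. To get (essentially) the stated bound one must return to the construction rather than use the blunt estimate \eqref{est Derror_theta}: the difference $\widetilde{\mathcal{E}}_{\theta,\eps}(S)-\widetilde{\mathcal{E}}_{\theta,\eps}(0)=S\bigl(\widetilde{\mathcal{E}}^2_{\theta,\eps}-\tfrac{\eps^2}{4}(\chi_\theta\cdot e_1)^2\bigr)$ consists of terms which each carry an explicit factor of $\eps$, because they arise from $\eps\,\K_{\theta,\eps}(n_\theta\cdot e_1)\circ\chi_\theta$ in Step 2 of the proof of Lemma \ref{L13}; this gives a $\theta$-derivative of order $\eps|\log\eps|\|\delta\theta\|_{H^k}$ for the $\mathcal{E}$-part, which is what makes the $\eps S(n_\theta\cdot e_1)\circ\chi_\theta$ term dominant (up to a logarithm). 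Your argument, as stated, does not capture this refinement, so the final inequality is not established by the reasoning you give.
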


\begin{proof}
All estimates are direct consequences of the construction of $\mu_{\theta,\eps}$, which is actually an affine function of $S$, its regularity, and the estimates on the error,
\eqref{cor est 1_theta} further uses Cor.\ \ref{Lem:muDiffable}.
The estimates on the differences  hold because \begin{align}
\mu_{\theta,\eps}(S)-\mu_{\theta,\eps}(0)-\eps S(n_\theta\cdot e_1)\circ\chi_\theta=\mathcal{E}_{\theta,\eps}(S) - \mathcal{E}_{\theta,\eps}(0),
\end{align}
and the right-hand side is linear in $S$. 
\end{proof}

We finally provide the proof of Lemma \ref{L13}.

\begin{proof}[Proof of Lemma \ref{L13}]
\textbf{Step 1. Expansion of $\K_{\theta,\eps} \tilde \mu_{\theta,0}$.} 
We start with the decomposition
\[
\mathcal{K}_{\theta,\eps}\tilde{\mu}_{\theta,0}=\widetilde{\mathcal{K}}_{\theta,0}\tilde{\mu}_{\theta,0}+(\mathcal{K}_{\theta,\eps}-\widetilde{\mathcal{K}}_{\theta,0})\tilde{\mu}_{\theta,0}.
\]
The first term on the right-hand side is a constant which is smooth in $\theta$ for small $\theta\in H^k(\partial B)$, and by Lemma \ref{CapMaximal} it holds that
\[
|\widetilde \K_{\theta,0} \tilde \mu_{\theta,0}| \lesssim \|\theta\|_{H^k}^2,\quad |\la \mathrm{D}_{\theta} (\widetilde \K_{\theta,0} \tilde \mu_{\theta,0}),\delta \theta\ra| \lesssim \|\theta\|_{H^k}\|\delta\theta\|_{H^k}.
\]
For the second term, we first expand $\tilde \mu_{\theta,0}$ with the help of Lemma \ref{Lem:muDiffable},
\[
\tilde \mu_{\theta,0}  = \frac1{2\pi}\left(1-\theta+\frac12\tilde{\mathcal{K}}_{0,0}^{-1}\theta\right) +O\left(\|\theta\|_{H^k}^2\right),
\]
where the error term is controlled in $H^{k-1}(\partial B)$, so that via Lemma \ref{bd Ktheta} and Lemma \ref{L7},
\begin{align*}
\mel     \|(\K_{\theta,\eps}-\widetilde\K_{\theta,0})\left(\tilde \mu_{\theta,0}  - \frac1{2\pi}\left(1-\theta+\frac12\tilde{\mathcal{K}}_{0,0}^{-1}\theta\right)\right) \|_{H^k}\\
&\lesssim |\log\eps| \|\tilde \mu_{\theta,0}  - \frac1{2\pi}\left(1-\theta+\frac12\tilde{\mathcal{K}}_{0,0}^{-1}\theta\right)\|_{H^{k-1}}\\
&\lesssim |\log \eps| \|\theta\|_{H^k}^2.
\end{align*}
Similarly, we find with the help of the chain rule that
\[
\left\|\left\la \mathrm{D}_{\theta} \left((\K_{\theta,\eps}-\widetilde\K_{\theta,0})\left(\tilde \mu_{\theta,0}  - \frac1{2\pi}\left(1-\theta+\frac12\tilde{\mathcal{K}}_{0,0}^{-1}\theta\right)\right)\right), \delta \theta\right\ra  \right\|_{H^k} \lesssim |\log \eps| \|\theta\|_{H^k}\|\delta \theta\|_{H^k}
\]
and 
\[
\|\mathrm{D}_{\eps}(\K_{\theta,\eps}-\widetilde\K_{\theta,0})\left(\tilde \mu_{\theta,0}  - \frac1{2\pi}\left(1-\theta+\frac12\tilde{\mathcal{K}}_{0,0}^{-1}\theta\right)\right) \|_{H^k}\lesssim \frac1{\eps}\|\theta\|_{H^k}^2.
\]

\noindent In order to identify the leading order terms in $\K_{\theta,\eps}\tilde\mu_{\theta,\eps}$, we are thus led to studying 
\begin{equation}
    \label{202}
(\K_{\theta,\eps} -\widetilde{\K}_{\theta,0})\frac1{2\pi}\left(1-\theta + \frac12\tilde{\mathcal{K}}_{0,0}^{-1}\theta\right).
\end{equation}
In view of  the expansion in Lemma \ref{L5} and the expansion of $K_{\theta,0}$ in the proof of Lemma \ref{L7}, we have that
\begin{align*}
 \mel    K_{\theta,\eps}(y,\tilde y) - \widetilde K_{\theta,0}(y,\tilde y) \\
 &=\frac1{2\pi} \left(\log\frac1{\eps} +\log 8-2\right)
  +\eps\frac{y_1+\tilde y_1}{4\pi} \left(\log\frac1{|y-\tilde y|} + \log 8-1+\log\frac1{\eps}\right)\\
    &\quad + \frac1{2\pi}\theta(\tilde y) \left(\log 8-2+\log\frac1{\eps}\right)  +\widetilde R_{\theta,\eps}(y,\tilde y),
\end{align*}
where $\widetilde{R}_{\theta,\eps}$ has the same bounds as $R_{\theta,\eps}$ in Lemma \ref{L5}. Up to error terms that are of the same order as those generated by the remainder $\widetilde{R}_{\theta,\eps}$, a short computation that uses the identities \eqref{63} and \eqref{64} of Lemma \ref{K inv} and the fact that $\widetilde{\mathcal{K}}_{0,0}\theta$ is a mean-zero function by Lemma \ref{L6} reveals that the leading order expansion in \eqref{202} is
\begin{align*}
    \frac1{2\pi} \left(\log \frac1{\eps} +\log 8-2\right) + \frac{\eps}{4\pi} \left(\log\frac1{\eps} +\log 8-\frac12\right) y_1 .
\end{align*}
Upon generating another error, we may replace $y_1$ by $\chi_{\theta}\cdot e_1$ in this formula. 

We thus conclude that
\begin{equation}
    \label{203}
    \K_{\theta,\eps}\tilde \mu_{\theta,0} =\frac1{2\pi}\left(\log\frac1{\eps} +\log 8-2\right) +\frac{\eps}{4\pi}\left(\log\frac1{\eps}+\log 8-\frac12\right)\chi_{\theta}\cdot e_1 +\widetilde{\mathcal{E}}_{\theta,\eps}^1,
\end{equation}
where $\widetilde{\mathcal{E}}_{\theta,\eps}^1$ has the properties
\begin{align}
    \|\widetilde{\mathcal{E}}_{\theta,\eps}^1\|_{H^k}&\lesssim |\log \eps|\left(\eps^2 +\|\theta\|^2_{H^k}\right),\label{204}\\
    \|\la \mathrm{D}_{\theta}\widetilde{\mathcal{E}}_{\theta,\eps}^1,\delta \theta\ra \|_{H^k} &\lesssim |\log\eps| (\eps + \|\theta\|_{H^k})\|\delta \theta\|_{H^k},\label{205}\\
    \|\mathrm{D}_{\eps} \widetilde{\mathcal{E}}_{\theta,\eps}^1\|_{H^k}&\lesssim \eps|\log \eps|  + \frac1{\eps}\|\theta\|_{H^k}^2.\label{206}
    \end{align}

\medskip

\noindent \textbf{Step 2. Expansion of $\eps\K_{\theta,\eps}(n_{\theta}\cdot e_1)\circ \chi_{\theta}.$}
We again use the expansion in Lemma \ref{L5} and see immediately that  the only relevant part of the kernel is 
\[
\frac{1}{2\pi}\left(\log\frac{1}{|y-\tilde{y}|}+\log(8)-2+\log\frac{1}{\eps}\right),
\]
because all other terms generate contributions that can be controlled as in \eqref{204}, \eqref{205}, and \eqref{206}.
Also, note that by Lemma \ref{bd Ktheta}, it holds that
\begin{align}
     \mel \norm{\int_{\de B}\frac{1}{2\pi}\left(\log\frac{1}{|y-\tilde{y}|}+\log(8)-2+\log\frac{1}{\eps}\right)(\tilde{y}_1-(n_\theta\cdot e_1)\circ\chi_\theta)\,\mathrm{d}\tilde{y}}_{H^k}\\
    &\lesssim |\log\eps|\norm{y_1-(n_\theta\cdot e_1)\circ\chi_\theta}_{H^{k-1}}\lesssim |\log\eps|\norm{\theta}_{H^k},
\end{align}

\noindent where the last estimate follows from $(n_\theta\cdot e_1)\circ\chi_\theta$ being in smooth in $\theta$ and being equal to $y_1$ at $\theta=0$. This term also has an $\eps$-derivative not larger than $ \eps^{-1} \norm{\theta}_{H^k}$ and a $\theta$-derivative not larger than $|\log\eps|$ by the same argument.

Hence it suffices to note that
\[
\int_{\partial B} \frac{1}{2\pi}\left(\log\frac{1}{|y-\tilde{y}|}+\log(8)-2+\log\frac{1}{\eps}\right)\tilde y_1\dd\Ha^1(\tilde y) = \frac12 y_1 ,
\]
by the virtue of Lemma \ref{K inv}. Again, we may substitute $y_1$ with $\chi_{\theta}\cdot e_1$ upon producing new error terms. 

In summary, we find that
\[
\eps \K_{\theta,\eps}(n_{\theta}\cdot e_1)\circ\chi_{\theta} = \frac{\eps}2\chi_{\theta}\cdot e_1 +\widetilde{\mathcal{E}}_{\theta,\eps}^2,
\]
with an error function controlled in the same manner as $\widetilde{\mathcal{E}}_{\theta,\eps}^1$ in \eqref{204}, \eqref{205} and \eqref{206}.

 \medskip

 \noindent\textbf{Step 3. Conclusion.} Combining the expansions of Step 1 and Step 2, we find that 
 \begin{align*}
\mel  \mathcal{K}_{\theta,\eps}\left(\tilde \mu_{\theta,0}  +\eps S (n_{\theta}\cdot e_1)\circ \chi_{\theta}\right)  - \eps W \chi_{\theta}\cdot e_1 -\frac12 \eps^2 W(\chi_{\theta}\cdot e_1)^2\\
& = \frac1{2\pi}\left(\log\frac1\eps +\log 8-2\right)\\
&\quad + \eps \left(\frac1{4\pi}\left(\log\frac1\eps  +\log 8 -\frac12\right) + \frac12 S - W\right)\chi_{\theta}\cdot e_1 \\
&\quad -\frac12\eps^2 W(\chi_{\theta}\cdot e_1)^2 + \tilde{\mathcal{E}}_{\theta,\eps}^1 + S \tilde{\mathcal{E}}_{\theta,\eps}^2.
 \end{align*}
  The second line is zero precisely if we choose $W=W(S)$ as in \eqref{def W}. This way, the first term in the final line becomes an error term. The estimates \eqref{207}, \eqref{208} and \eqref{209} on the error follow immediately from \eqref{204}, \eqref{205} and \eqref{206}.  
\end{proof}

It remains to compute $\gamma$:

\begin{lemma}\label{asymp gamma}
It holds that \begin{align}
\gamma=&\frac{1}{2\pi}\left(\log 8 + \log\frac{1}{\eps} -2\right) -\frac{W}{2}+O\left(|\log\eps|^2(1+|S|)(\eps^2+\norm{\theta}_{H^k}^2)\right).
\end{align}
\end{lemma}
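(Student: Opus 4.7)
The plan is to read off $\gamma$ directly from the boundary equation \eqref{des eq} after substituting the ansatz for $\mu_{\theta,\eps}(S)$ and applying Lemma \ref{L13}. By construction, the non-constant (in $y$) parts on both sides of the resulting equation should match exactly; what remains is just the comparison of constant parts, which gives the formula.

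Concretely, I would start from the identity
\[
\mathcal{K}_{\theta,\eps}\mu_{\theta,\eps}(S)=\gamma+\frac{W}{2}(1+\eps(\chi_\theta)_1)^2 \quad\mbox{on } \partial B,
\]
insert the decomposition \eqref{74}, $\mu_{\theta,\eps}(S)=\tilde\mu_{\theta,0}+\eps S(n_\theta\cdot e_1)\circ\chi_\theta+\mathcal{E}_{\theta,\eps}(S)$, and apply Lemma \ref{L13} to the first two terms together with the defining relation \eqref{def E}, namely $\mathcal{K}_{\theta,\eps}\mathcal{E}_{\theta,\eps}(S)+\widetilde{\mathcal{E}}_{\theta,\eps}(S)=\const$. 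This yields
\[
\gamma+\frac{W}{2}(1+\eps(\chi_\theta)_1)^2=\frac{1}{2\pi}\!\left(\log\frac{1}{\eps}+\log 8-2\right)+\eps W\,\chi_\theta\cdot e_1+\frac{1}{2}\eps^2 W(\chi_\theta\cdot e_1)^2+C_{\theta,\eps}(S),
\]
where $C_{\theta,\eps}(S)$ denotes the (scalar) constant produced by \eqref{def E}.

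Expanding $(1+\eps(\chi_\theta)_1)^2=1+2\eps(\chi_\theta)_1+\eps^2(\chi_\theta)_1^2$, the terms $\eps W\,\chi_\theta\cdot e_1$ and $\tfrac{1}{2}\eps^2 W(\chi_\theta\cdot e_1)^2$ on the right exactly match the linear and quadratic contributions $W\eps(\chi_\theta)_1+\tfrac{W}{2}\eps^2(\chi_\theta)_1^2$ produced on the left. This cancellation, which is the whole point of the construction of $W$ in Lemma \ref{L13}, reduces the identity to
\[
\gamma+\frac{W}{2}=\frac{1}{2\pi}\!\left(\log\frac{1}{\eps}+\log 8-2\right)+C_{\theta,\eps}(S).
\]

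It only remains to estimate $|C_{\theta,\eps}(S)|$. Since $C_{\theta,\eps}(S)=\mathcal{K}_{\theta,\eps}\mathcal{E}_{\theta,\eps}(S)+\widetilde{\mathcal{E}}_{\theta,\eps}(S)$ as functions on $\partial B$ are equal to this constant, we may bound it by the $H^k$-norm of either side. Lemma \ref{bd Ktheta} gives $\|\mathcal{K}_{\theta,\eps}\mathcal{E}_{\theta,\eps}(S)\|_{H^k}\lesssim|\log\eps|\,\|\mathcal{E}_{\theta,\eps}(S)\|_{H^{k-1}}$, and Proposition \ref{mu diffable} estimate \eqref{est error} yields $\|\mathcal{E}_{\theta,\eps}(S)\|_{H^{k-1}}\lesssim(1+|S|)|\log\eps|(\eps^2+\|\theta\|_{H^k}^2)$; together with the bound \eqref{207} on $\widetilde{\mathcal{E}}_{\theta,\eps}(S)$, this gives
\[
|C_{\theta,\eps}(S)|\lesssim|\log\eps|^2(1+|S|)(\eps^2+\|\theta\|_{H^k}^2),
\]
which, after solving for $\gamma$, is exactly the claim. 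No step here is really an obstacle once Lemma \ref{L13} and Proposition \ref{mu diffable} are in hand; the only thing to be careful about is to note that the leading-order logarithmic factor from $\mathcal{K}_{\theta,\eps}$ costs one additional $|\log\eps|$ on top of the $|\log\eps|$ already present in the error bound \eqref{est error}, which is why the remainder carries the power $|\log\eps|^2$ rather than $|\log\eps|$.
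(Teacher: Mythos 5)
Your proposal is correct and follows essentially the same route as the paper: insert the ansatz \eqref{74} into \eqref{des eq}, apply Lemma \ref{L13} and the defining relation \eqref{def E}, cancel the $y$-dependent terms against $\frac{W}{2}(1+\eps(\chi_\theta)_1)^2$, and bound the leftover constant. The only (harmless) difference is cosmetic: you estimate the constant $\tilde{\mathcal{E}}(S)+\K_{\theta,\eps}\mathcal{E}(S)$ via Lemma \ref{bd Ktheta} together with \eqref{est error}, while the paper invokes the constant bound from Lemma \ref{L15} with \eqref{207}; both give the same $|\log\eps|^2(1+|S|)(\eps^2+\|\theta\|_{H^k}^2)$ remainder.
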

\begin{proof}
It follows from Lemma \ref{L13} and the definition of $W$ that we have \begin{align}
\mathcal{K}_{\theta,\eps}\left(\tilde{\mu}_{\theta,0}+S\eps(n_\theta\cdot e_1)\circ\chi_\theta\right)=&\frac{1}{2\pi}\left(\log8+\log\frac{1}{\eps}-2\right)-\frac{W}2+\frac{1}{2}W(1+\eps(\chi_\theta)_1)^2\\
&+\tilde{\mathcal{E}}_{\theta,\eps}(S).\nonumber
\end{align}
By adding $\K_{\theta,\eps}\mathcal{E}(S)$ to both sides of the equation and using the definition of $\mu_{\theta,\eps}$ in \eqref{74}, we see that \begin{align} 
\K_{\theta,\eps}\mu_{\theta,\eps}=\frac{1}{2\pi}\left(\log8+\log\frac{1}{\eps}-2\right)-\frac{W}2+\frac{1}{2}W(1+\eps(\chi_\theta)_1)^2+\left(\tilde{\mathcal{E}}(S)+\K_{\theta,\eps}\mathcal{E}(S)\right),
\end{align}

\noindent and hence, by \eqref{des eq}, we must have that
\begin{align}
\gamma=\frac{1}{2\pi}\left(\log8+\log\frac{1}{\eps}-2\right)-\frac{W}2+\left(\tilde{\mathcal{E}}(S)+\K_{\theta,\eps}\mathcal{E}(S)\right),
\end{align}

\noindent because the term $\tilde{\mathcal{E}}(S)+\K_{\theta,\eps}\mathcal{E}(S)$ is a constant by definition \eqref{def E}.  By Lemma \ref{L15} and estimate \eqref{207}, this constant is controlled by
\[
|\tilde{\mathcal{E}}(S)+\K_{\theta,\eps}\mathcal{E}(S)| \lesssim |\log\eps| \|\tilde{\mathcal{E}}(S)\|_{H^k} \lesssim |\log\eps|^2 \left(\eps^2 +\|\theta\|_{H^k}^2\right)(1+|S|).
\]
\end{proof}

For the uniqueness of the solution, it will also be crucial that the values of $W$ and $\gamma$ determine $S$, if the circulation is given.

\begin{lemma}\label{bd det S}
Suppose $(\eps,\theta)\in \mathcal{M}$ are small and let $\tilde \gamma$ and $\tilde W$ be given such that there is some $\tilde \mu\in H^{k-1}$ with \begin{align}
\K_{\theta,\eps}\tilde\mu=\tilde\gamma+\frac{1}{2}\tilde{W}(1+\eps(\chi_\theta)_1)^2,\quad \int_{\de B}m_\theta \tilde\mu\dd\Ha^1=1,\label{mu eq}
\end{align}

\noindent then there is a unique $\tilde S$ such that $\tilde\mu=\mu_{\theta,\eps}(\tilde S)$. Furthermore $|\tilde S|\lesssim \tilde W+|\log\eps|$.
\end{lemma}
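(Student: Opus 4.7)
The plan is to exploit the uniqueness statement in Lemma \ref{L15} together with the explicit affine dependence of $W$ on $S$ established in Lemma \ref{L13}. Recall from \eqref{def W} that $W(S) = \frac{1}{4\pi}\left(\log 8 - \frac{1}{2} + \log\frac{1}{\eps}\right) + \frac{S}{2}$, so $S$ is uniquely recovered from $W$ by inversion. This immediately suggests defining
\begin{equation*}
\tilde S := 2\tilde W - \frac{1}{2\pi}\left(\log 8 - \frac{1}{2} + \log\frac{1}{\eps}\right),
\end{equation*}
which automatically satisfies $|\tilde S| \lesssim |\tilde W| + |\log \eps|$, yielding the claimed quantitative bound.

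The key step will be to verify that $\tilde \mu = \mu_{\theta,\eps}(\tilde S)$ for this choice of $\tilde S$. By construction of $\mu_{\theta,\eps}(\tilde S)$ in \eqref{74} and the boundary equation \eqref{des eq} (together with Lemma \ref{asymp gamma} for $\gamma$), one has
\begin{equation*}
\K_{\theta,\eps} \mu_{\theta,\eps}(\tilde S) = \gamma(\tilde S) + \tfrac{1}{2} W(\tilde S)\bigl(1+\eps(\chi_\theta)_1\bigr)^2 = \gamma(\tilde S) + \tfrac{1}{2}\tilde W \bigl(1+\eps(\chi_\theta)_1\bigr)^2,
\end{equation*}
together with $\int_{\de B} m_\theta \mu_{\theta,\eps}(\tilde S)\dd \Ha^1 = 1$. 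The given $\tilde\mu$ fulfils \eqref{mu eq} with the same coefficient $\tilde W$ on the right-hand side. Setting $g := \tfrac{1}{2} \tilde W (1+\eps(\chi_\theta)_1)^2$ and $c := 1$, both $\tilde\mu$ and $\mu_{\theta,\eps}(\tilde S)$ are then solutions to the problem $\K_{\theta,\eps} f - g \in \R$, $\int_{\de B} m_\theta f \dd \Ha^1 = c$, and the uniqueness assertion in Lemma \ref{L15} will force $\tilde \mu = \mu_{\theta,\eps}(\tilde S)$; as a by-product, $\tilde \gamma = \gamma(\tilde S)$.

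Uniqueness of $\tilde S$ will follow because $S \mapsto W(S)$ is affine with nonzero slope $1/2$: two distinct candidates $\tilde S_1 \neq \tilde S_2$ would produce right-hand sides in \eqref{des eq} that differ in the coefficient of the non-constant function $(1+\eps(\chi_\theta)_1)^2$, which contradicts the equality $\mu_{\theta,\eps}(\tilde S_1) = \mu_{\theta,\eps}(\tilde S_2) = \tilde \mu$. I do not anticipate a serious obstacle here; the only point to be alert to is that Lemma \ref{L15} is applied with a datum $g$ that depends only on $\tilde W$, so that the constant $\tilde \gamma$ is determined by the remaining data rather than constituting an independent unknown, and that the smallness condition for $\mu_{\theta,\eps}(\tilde S)$ to be well-defined is, by Proposition \ref{mu diffable}, uniform in $S$ and hence imposes no restriction on $\tilde S$ beyond what is already given.
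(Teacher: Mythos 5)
Your proof is correct, and it reaches the conclusion by a more explicit route than the paper. The paper argues abstractly: the set of pairs $(\tilde W,\tilde\gamma)$ for which \eqref{mu eq} is solvable is an affine subspace of $\R^2$; it contains the one-dimensional affine line $\{(W(S),\gamma(S)):S\in\R\}$ produced by the construction (one-dimensional because $W$ is non-constant in $S$ by \eqref{def W}); and it cannot be larger, since being all of $\R^2$ would give two distinct admissible $\tilde\gamma$'s for one $\tilde W$, contradicting the uniqueness in Lemma \ref{L15}. You instead invert \eqref{def W} explicitly to define $\tilde S$ from $\tilde W$, observe that $\mu_{\theta,\eps}(\tilde S)$ and $\tilde\mu$ then solve the problem of Lemma \ref{L15} with the same datum $g=\tfrac12\tilde W(1+\eps(\chi_\theta)_1)^2\in H^k$ and the same circulation $c=1$ (the constants $\gamma(\tilde S)$ and $\tilde\gamma$ being absorbed into the quotient), so uniqueness in Lemma \ref{L15} forces $\tilde\mu=\mu_{\theta,\eps}(\tilde S)$ and, as a by-product, $\tilde\gamma=\gamma(\tilde S)$. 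Both arguments rest on exactly the same two ingredients — uniqueness in Lemma \ref{L15} and the non-degenerate affine dependence of $W$ on $S$ — but yours is constructive: it produces $\tilde S$ directly and hence gives the bound $|\tilde S|\lesssim|\tilde W|+|\log\eps|$ immediately from the formula, while the paper's dimension-counting argument only uses that $W$ is non-constant in $S$ and then reads off the estimate from \eqref{def W} afterwards. Your uniqueness step is also sound: equality $\mu_{\theta,\eps}(\tilde S_1)=\mu_{\theta,\eps}(\tilde S_2)$ forces, after applying $\K_{\theta,\eps}$, equality of the right-hand sides of \eqref{des eq}, and since $(1+\eps(\chi_\theta)_1)^2$ is non-constant for $\eps>0$ and small $\theta$, this pins down $W(\tilde S_1)=W(\tilde S_2)$ and hence $\tilde S_1=\tilde S_2$; and you correctly note that Proposition \ref{mu diffable} guarantees $\mu_{\theta,\eps}(\tilde S)$ is well-defined uniformly in $S$, so the possibly large value of $\tilde S$ causes no trouble.
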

\begin{proof}
First, we note that the set of $\tilde W,\tilde\gamma$ for which there is a solution to \eqref{mu eq} is an affine subspace of $\R^2$. It clearly contains the set of all $W,\gamma$ given through the construction above, which is a 1-dimensional affine subspace as $W$ is non-constant in $S$ by definition \eqref{def W}. It also cannot be bigger since it would otherwise be the full $\R^2$ and would in particular contain $\R\times \{0\}$, which violates the uniqueness statement of Lemma \ref{L15}. Hence the set of $\tilde W,\tilde\gamma$ for which there is a solution to \eqref{mu eq} agrees with the set associated with $\mu_{\theta,\eps}(\tilde S)$.

The estimate on $\tilde S$ then follows from the asymptotic \eqref{def W}.
\end{proof}

\section{Proof of the main theorem}\label{S:IFT}
We finally turn to the proof of our main result, for which we construct solutions to the jump condition \eqref{41}, that is,
\[
\rho \lambda_{\theta,\eps}^2 -\mu_{\theta,\eps}(S)^2 + \beta_{\eps} h_{\theta,\eps} = \const.
\]
The proofs for both cases with and without surface tension are very similar, so will focus on the first case mostly and comment on the minor adaptations needed for the second case.

Let us consider the functionals
$\mathcal{F}:\mathcal{M}\times \R\rightarrow H_{\sym}^{k-2}(\de B)/\R$ (the manifold $\mathcal{M}$ was defined in \eqref{def M}), defined as
\begin{align}
\mathcal{F}(\theta,\eps,S)=\frac{1}{\beta_{\eps}}\left(\rho\lambda_{\theta,\eps}^2-\mu_{\theta,\eps}(S)^2\right)+ h_{\theta,\eps}.
\end{align}
if $\beta\eps >0$,
 and  
 \begin{align}
\mathcal{F}(\theta,\eps,S)=\left(\rho\lambda_{\theta,\eps}^2-\mu_{\theta,\eps}(S)^2\right)
\end{align}
if $\beta\eps =0.
$ 
\noindent As explained in Subsection \ref{proof strat}, it suffices to find $\theta$ such that $\mathcal{F}(\theta,\eps,S)=0$ (in $H^{k-2}_{\sym}(\partial B)/\R$) to show existence. It will be convenient to consider 
\[
\omega_{\eps} = \frac1{\beta_{\eps}}
\]
in the case of positive surface tensions. By the assumptions \eqref{405}, \eqref{406}, the function $\eps\mapsto \omega_{\eps}$ is $C^1$ and fulfills
\begin{equation}
\label{300}
\omega:= \lim_{\eps\to 0} \omega_{\eps} \in [0,\infty).
\end{equation}

 Below we will frequently suppress the arguments $\theta$ and $\eps$ in  $\mathcal{F}(S)$, $\lambda$, $h$ and $\mu(S)$.
We will first carry out the reduction and then apply the implicit function theorem.

\subsection{The reduction}

We shall split $\mathcal{F}=\mathcal{F}_1+\mathcal{F}_2$, where $\mathcal{F}_1$ is the projection of $\mathcal{F}$ onto the span of $y_1$ (with respect to the standard scalar product in $L^2(\de B)/\R$), that is,
\[
\mathcal{F}_1 = \frac{y_1}{\pi} \la \mathcal{F},\tilde y_1\ra ,
\]
and $\mathcal{F}_2$ is the projection onto the orthogonal complement of this. As explained in Subsection \ref{proof strat}, $\mathcal{F}_1$ is the part that we cannot control with the implicit function theorem due to the translation invariance of the limiting problem and which hence requires a Lyapunov--Schmidt reduction. 

In this first step, our goal is to ensure that we  can pick $S=S_{\theta,\eps}$ as a function of $\eps$ and $\theta$ uniquely such that $\mathcal{F}_1(S_{\theta,\eps})=0$ for small enough $(\eps,\theta)\in \mathcal{M}$, under the constraint 
\begin{align}
S_{\theta,\eps} |\log \eps|\left(\eps^2 + \|\theta\|^2_{H^k}\right) \le C_0,
\end{align}
for some sufficiently small universal constant $C_0$.
In the next subsection, we will show that the contribution to $\mathcal{F}_2$ through $S$ as a function of $(\theta,\eps)$ is continuously Fr\'echet differentiable in $(\eps,\theta)\in \mathcal{M}$ with derivative $0$ at $0$ in $H^{k-1}(\partial B)$.

%Instead of $\mathcal{F}_1$ we study $\scalar{\mathcal{F}}{x_1}$, which is $0$ iff $\mathcal{F}_1$ is. \smallskip

Because $\mu$ is affine in $S$ and the other terms are independent of $S$, the functional $\mathcal{F}_1$ is a quadratic polynomial in $S$, of which we first compute the asymptotics of the coefficients. 

We first note that it holds that $\mathcal{F}(0,0,0)=\const$,   because $\lambda_{0,0}=-2$ by \eqref{phi0}, $\mu_{0,0}(0)=\frac1{2\pi}$ by Proposition \ref{Lem:muDiffable} and \eqref{74} and $h_{0,0}=1$, and because $( \beta_{\eps})^{-1}$ is bounded in the case of positive surface tension by our assumption in \eqref{300}.   In particular, it must hold that $\mathcal{F}_1(0,0,0)=0$.

We now expand $\mathcal{F}_1$ into a first-order Taylor series. Using the smoothness $\lambda_{\theta,\eps}$ established in Proposition \ref{C1} and recalling Remark \ref{SeriesDirToNeumann}, we see that 
\begin{align*}
    \la \lambda^2,y_1\ra & = 4\la 1,y_1\ra +2\eps \la y_1,y_1\ra -8 \la \mathcal{L} \theta-\theta,y_1\ra +O\left(\eps^2 +\|\theta\|_{H^k}^2\right)\\
    & = 2\pi \eps +O\left(\eps^2 +\|\theta\|_{H^k}^2\right),
\end{align*}
because $\la 1,y_1\ra =0$. Similarly, using the smoothness of $h_{\theta,\eps}$ by   Proposition   \ref{DeriCurvature}, we observe that 
\begin{align}
    \la h,y_1\ra & = \la 1,y_1\ra +\eps \la y_1,y_1\ra -\la \theta+\partial_{\tau}^2 \theta,y_1\ra +O\left(\eps^2 +\|\theta\|_{H^k}^2\right)\\
    & = \eps \pi +O\left(\eps^2 +\|\theta\|_{H^k}^2\right),
\end{align}
because via two integrations by parts, it holds that $\la \partial_{\tau}^2 \theta,y_1\ra  = -\la \theta,y_1\ra$. 
 The term $\mu_{\theta,\eps}(0)$ is not smooth, but the second-order error in its expansion is of order $|\log\eps|(\eps^2+\norm{\theta}_{H^k}^2)$ by Proposition \ref{mu diffable}. 
Using this proposition and Proposition \ref{Lem:muDiffable} together with  Remark \ref{deri mu Fourier}, we find that
\begin{align}
    \la \mu(0)^2,y_1\ra & = \frac1{4\pi^2}\la 1,y_1\ra -\frac1{2\pi^2}\la \delta \theta-\frac12\tilde{\mathcal{K}}_{0,0}^{-1}\delta \theta,y_1\ra +O\left(|\log \eps|\left(\eps^2 +\|\theta\|_{H^k}^2\right)\right)\\
    &  = O\left(|\log \eps|\left(\eps^2 +\|\theta\|_{H^k}^2\right)\right),
\end{align}
if we expand along a geodesic (in the sense that we take the image of a line under the diffeomorphism between a neighborhood of $0$ in $\mathcal{M}$ and an open set of $\Span\{1,y_1\}^\perp\subset H_{\sym}^k(\de B)$ as described in Subsection \ref{tspace M}) so that  $\delta \theta\in T_0V^k$ satisfies $\|\delta \theta\|_{H^k}\lesssim \|\theta\|_{H^k}$.
Combining these expansions, in sum, we obtain that \begin{align}\label{const S term}
\scalar{\mathcal{F}_{\theta,\eps}(0)}{y_1}=\frac{2\rho\eps\pi}{\beta_{\eps}}+\pi\eps+O\left(|\log\eps|(\norm{\theta}^2+\eps^2)\right).
\end{align}
Moreover, similar calculations reveal that
\begin{align}
    \mathrm{D}_{\eps} \la \mathcal{F}_{\theta,\eps}(0),y_1\ra  &= \partial_{\eps} \left(\frac{2\rho \eps \pi}{\beta_{\eps}} +\pi\eps\right) + O\left(|\log\eps|\left(\eps + \frac{\|\theta\|_{H^k}^2}{\eps}\right)\right),\label{306}\\
    \la \mathrm{D}_{\theta}\la \mathcal{F}_{\theta,\eps}(0),x_1\ra,\delta\theta\ra & = O\left(|\log\eps|(\eps+\|\theta\|_{H^k})\|\delta \theta\|_{H^k}\right)\label{307}.
\end{align}

Similarly, we can compute the asymptotic of the terms depending on $S$, using the fact that only the contribution of $\mu$ depends on $S$. We write 
\begin{equation}\label{211}
\beta_{\eps}\scalar{\mathcal{F}(S)-\mathcal{F}(0)}{y_1}=2 \scalar{\mu(0)(\mu(0)-\mu(S))}{y_1}-\scalar{(\mu(0)-\mu(S))^2}{y_1}.
\end{equation}

\noindent By construction and \eqref{cor est 2}, we have that $\mu(S)-\mu(0)=\eps S(e_1\cdot n_\theta)\circ \chi_\theta$ up to a term which is at most of the order $  |S||\log\eps|(\eps^2+\norm{\theta}_{H^k}^2)$. Also we have \begin{equation}\norm{y_1-(e_1\cdot n_\theta)\circ \chi_\theta}_{H^{k-1}}\lesssim \norm{\theta}_{H^k},\end{equation} as this is smooth and vanishing  at $\theta=0$,  with a corresponding bound    on the derivative.
Hence, we have for the first term on the  right-hand side of \eqref{211} that
\begin{align*}
2 \scalar{\mu(0)(\mu(0)-\mu(S))}{y_1} &= -2\eps S\scalar{\mu(0)(n_{\theta}\cdot e_1)\circ \chi_{\theta}}{y_1} + O\left(|S||\log\eps|\left(\eps^2 +\|\theta\|_{H^k}^2\right)\right)\\
& =  -\eps S+ O\left(|S||\log\eps|\left(\eps^2 +\|\theta\|_{H^k}^2\right)\right),
\end{align*}

\noindent and the error term has a derivative one order lower.
For the second term, we may use a similar  computation via \eqref{cor est 2} %that by Proposition \ref{mu diffable} we have \begin{align}
%\mu(S)-\mu(0)=S\eps x_1+O(|S||\log\eps|(\eps^2+\norm{\theta}_{V^k}^2))
%\end{align}
 and noticing that $\scalar{y_1}{y_1^2}=0$, to see that
\[
\la (\mu(0)-\mu(S))^2,y_1\ra = O\left(S^2 \eps |\log \eps|(\eps^2+\|\theta\|^2_{H^k})\right).
\]
Together, this gives us that
\begin{equation}\begin{aligned}\label{higher term S}
  \beta_{\eps}\scalar{\mathcal{F}(S)-\mathcal{F}(0)}{y_1}
=&-\eps S+O\left(|S||\log\eps|\left(\eps^2+\norm{\theta}_{H^k}^2\right)\right) \\
&+O\left(S^2 \eps |\log\eps|\left(\eps^2+\norm{\theta}_{H^k}^2\right)\right). 
\end{aligned}\end{equation}
For the derivatives, we perform a similar calculation, using the assumption \eqref{324} on the derivative of $\beta\eps$, and find that
\begin{equation}\label{308}
\begin{aligned}
 \mel   \mathrm{D}_{\eps} \la \mathcal{F}{(S)}-\mathcal{F}(0),y_1\ra \\
 & = - S \partial_{\eps}\left(\frac{\eps}{ \beta_{\eps}}\right) + O\left(|S|\frac{|\log\eps|}{\beta_{\eps}} \left(\eps+\frac{\|\theta\|^2_{H^k}}{\eps}\right)\right) + O\left(S^2 \frac{|\log\eps|}{\beta_{\eps}} \left(\eps^2+\|\theta\|_{H^k}^2\right)\right),
 \end{aligned}
 \end{equation}
 and 
 \begin{equation}
     \label{309}
\begin{aligned}
\mel | \la \mathrm{D}_{\theta}\la \mathcal{F}(S)-\mathcal{F}(0),y_1\ra, \delta \theta\ra|\\
&  = O\left(|S|\frac{\eps+\norm{\theta}_{H^k}}{\beta_{\eps}} |\log\eps|\|\delta\theta\|_{H^k}\right) + O\left(S^2 \eps \frac{|\log\eps|}{\beta_{\eps}}(\eps +\|\theta\|_{H^k})\|\delta\theta\|_{H^k}\right).
\end{aligned}
 \end{equation}
All the error terms are either linear or quadratic in $S$.

Hence we see from \eqref{const S term} and \eqref{higher term S}, after dividing by $\eps$ that $\mathcal{F}_1(S)=0$ is a quadratic equation in $S$, namely 
\begin{equation}\label{305}
\begin{aligned}
 \mel   \frac{2\rho\pi}{\beta_{\eps}} + \pi + O\left(|\log\eps|\left(\eps+\frac{\|\theta\|_{H^k}^2}{\eps}\right)\right) -\left(1+O\left(|\log\eps|\left(\eps+\frac{\|\theta\|_{H^k}^2}{\eps}\right)\right)\right)\frac{S}{\beta_{\eps}}\\
 & = O\left(\eps\beta_{\eps}|\log\eps| \left(\eps+\frac{\|\theta\|_{H^k}^2}{\eps}\right)\right) \left(\frac{S}{\beta_{\eps}}\right)^2.
\end{aligned}
\end{equation}

\noindent It will be more convenient to consider this as an equation in $t=\frac{S}{\beta_{\eps}}$. We thus are concerned with a quadratic equation of the form
\noindent 
\[
a-bt =t^2 c,
\]
\noindent where
\begin{align}
    a & = \frac{2\rho\pi}{\beta_{\eps}} + \pi + O\left(|\log\eps|\left(\eps+\frac{\|\theta\|_{H^k}^2}{\eps}\right)\right),\\
    b& = 1+O\left(|\log\eps|\left(\eps+\frac{\|\theta\|_{H^k}^2}{\eps}\right)\right),\\
    c&= O\left(\eps\beta_{\eps}|\log\eps| \left(\eps+\frac{\|\theta\|_{H^k}^2}{\eps}\right)\right).
\end{align}
According to our assumption that $\norm{\theta}_{H^k}\lesssim \eps^\ell$ (see \eqref{def M}),
the equation is certainly solvable for $t\in \R$ under the assumption \eqref{404}, which was that   
\begin{equation}
    \label{312}
\lim_{\eps\to 0}  \eps\beta_{\eps} =0.
\end{equation}
 The two unique solutions are then (approximately) given by
\[
t=\frac{a}{b } +O(c)\quad\mbox{and}\quad t = -\frac{b}{c} - \frac{a}b + O(c),
\]
and thus, if $\theta$ and $\eps$ are small enough, $t\approx a/b$ is the unique  solution satisfying, say, $|ct|\le 1/2$ uniformly in $\eps\ll1$.

On the level of $S$, the argument shows that there exists a constant $C_0$ such that there is precisely one solution $S=S_{\theta,\eps}$ satisfying 
\[
|S_{\theta,\eps}| |\log \eps|\left(\eps^2 + \|\theta\|^2_{H^k}\right) \le C_0,
\]
for $(\theta,\eps)\in \mathcal{M}$ small. It has the leading order asymptotics 
\begin{equation}\label{311}
S_{\theta,\eps}= 2\rho \pi + \beta_{\eps}\pi + O\left(\beta_{\eps}|\log\eps|\left(\eps+\frac{\|\theta\|_{H^k}^2}{\eps}\right)\right),
\end{equation}
where we used the bound \eqref{323} on $\beta_\eps$,
and thus 
\begin{equation}
\label{bd S}
|S_{\theta,\eps}|\lesssim \beta_{\eps}.
\end{equation}

Because all error terms occurring in \eqref{305} are continuously differentiable in $(\theta,\eps)$, the (real-valued) implicit function theorem ensures that $S_{\theta,\eps}$ depends in a neighborhood of $(0,0)$ continuous differentiably on $(\theta,\eps)$. Differentiating the defining equation $\la \mathcal{F}(S),y_1\ra=0$ and using the assumption \eqref{324} (which says $|\de_\eps\beta_\eps|\lesssim \beta_\eps \eps^{-1}$) as well as the identities \eqref{306},  \eqref{211}, \eqref{308}, and \eqref{bd S} or, respectively, \eqref{307}, \eqref{211}, \eqref{309}, and \eqref{311} we find on the one hand that
\begin{align}
 \mel    \left(\mathrm{D}_{\eps}S_{\theta,\eps}  \right)\frac{\eps}{\beta_{\eps}}\left(1+O\left(|\log\eps|(1+\eps\beta_{\eps})\left(\eps+\frac{\|\theta\|_{H^k}^2}{\eps}\right)\right)\right) \\
 &= \de_\eps\left(\frac{2\rho\pi\eps +\eps\beta_{\eps}\pi}{\beta_{\eps}}\right) + O\left(|\log\eps|(1+\eps\beta_{\eps})\left(\eps+\frac{\|\theta\|_{H^k}^2}{\eps}\right)\right),
\end{align}
and thus,   
\begin{equation}
    \label{310}
    \frac{\eps}{\beta_{\eps}}|\mathrm{D}_{\eps}S_{\theta,\eps}| \lesssim 1,
\end{equation}
for $(\theta,\eps)\in \mathcal{M}$ sufficiently small, where we have used \eqref{312}.
On the other hand, it holds that
\begin{align}
    \mel \la \mathrm{D}_{\theta} S_{\theta,\eps},\delta \theta\ra \frac{\eps}{\beta_{\eps}}\left(1+O\left((1+\eps\beta_{\eps})|\log\eps|\left(\eps+\frac{\|\theta\|_{H^k}^2}{\eps}\right)\right)\right)\\
    & = O\left(\left(|\log\eps|  +\eps\beta_{\eps} |\log\eps|\right)(\eps +\|\theta\|_{H^k}) \|\delta \theta\|_{H^k}\right),
\end{align}
for $(\theta,\eps)\in \mathcal{M}$ sufficiently small, and thus,
\begin{equation}\label{315}
    \frac{\eps}{\beta_{\eps}}|\la \mathrm{D}_{\theta}S_{\theta,\eps},\delta \theta\ra | \lesssim \|\delta \theta\|_{H^k},
\end{equation}
assuming that \eqref{312} and \eqref{300}
are satisfied.

 The same calculation can also be made for the functional without surface tension, where, writing $S^{\sigma=0}_{\theta,\eps}$ instead of $S_{\theta,\eps}$ for clarity, one gets
\begin{equation}\label{bd alt S}
S^{\sigma=0}_{\theta,\eps} = 2\pi\rho + O\left(|\log\eps|\left(\eps + \frac{\|\theta\|_{H^k}^2}{\eps}\right)\right)
\end{equation}
for the unique solution satisfying
\[
S_{\theta,\eps}^{\sigma=0}|\log\eps|\left(\eps^2+\|\theta\|_{H^k}^2\right) \le C_*,
\]
for $(\theta,\eps)\in \mathcal{M}$  small. The derivatives are controlled as follows:
\begin{equation}\label{bd deri alt S}
\eps|\mathrm{D}_{\eps} S_{\theta,\eps}^{\sigma=0}| \lesssim 1,\quad \eps|\la \mathrm{D}_{\theta} S_{\theta,\eps}^{\sigma=0},\delta \theta\ra | = O\left(|\log \eps| \left(\eps +\frac{\|\theta\|_{H^k}^2}{\eps}\right)\|\delta \theta\|_{H^k}\right).
\end{equation}

\subsection{The contribution of $S$ to $\mathcal{F}_2$}
Next, we show that the contribution of $S_{\theta,\eps}$ to $\mathcal{F}_2$ is neglectable, i.e.\ that $\mathcal{F}_2(0,0,S_{0,0})=0$ in $H^{k-2}(\partial B)/\R$ (or $H^{k-1}(\partial B)/\R$ in the case $\beta_\eps=0$) and that $\mathcal{F}_2(\theta,\eps,S_{\theta,\eps})$ is continuously Fr\'echet differentiable near $0$ with $\mathrm{D}_{\theta,\eps}\big|_{\theta,\eps=0}\mathcal{F}_2(\theta,\eps,S_{\theta,\eps})=\mathrm{D}_{\theta,\eps}\big|_{\theta,\eps=0}\mathcal{F}_2(\theta,\eps,0)$.

%We will do this for any $S(\theta,\eps)$ satisfying weaker bounds than \eqref{bd S} and \eqref{bd deri S} (but not necessarily satisfying $\mathcal{F}_1(S)=0$) in order to obtain a stronger uniqueness statement. 

%The bounds that we will assume are \begin{align}
%|S|\lesssim& \sigma\eps+\eps^{-\frac{1}{2}+\delta'}\label{new bd S}\\
%|\mathrm{D}_{\theta,\eps} S|\lesssim & \sigma+\eps^{-\frac{3}{2}+\delta'} \label{new bd deri S}
%\end{align}

%for some $\delta'>0$. Clearly, this is weaker than \eqref{bd S} and \eqref{bd deri S}.

%For the case without surface tension, we will prove this for any $S_{\sigma=0}(\theta,\eps)$ which satisfies the bounds \begin{align}
%&|S_{\sigma=0}(\theta,\eps)|\lesssim \eps^{-\frac{1}{2}+\delta'}\label{new bd alt S}\\
%&|\mathrm{D}_{\theta,\eps}\lesssim \eps^{-1+\delta'}\label{new bd deri alt S}
%\end{align}

%for some $\delta'>0$ which is clearly weaker than \eqref{bd alt S} and \eqref{bd deri alt S} as $\norm{\theta}_{H^k}\lesssim \eps^\ell$.

 Let $J$ be the projection onto the orthogonal complement of $y_1$ (with respect to the usual $L^2$ scalar product).
Using that only $\mu$ depends on $S$, we may write \begin{align}
\mathcal{F}_2(S)-\mathcal{F}_2(0)=-\frac{1}{\beta_{\eps}}J \Big(2(\mu(S)-\mu(0))\mu(0)+(\mu(S)-\mu(0))^2\Big) .
\end{align}

\noindent Most of the first summand disappears since we modded out $y_1$. Indeed, by Proposition \ref{mu diffable} we know that \begin{align}
\mu(0)=\frac{1}{2\pi}+O(\eps+\norm{\theta}_{H^k}),\label{exp mu}
\end{align}

\noindent where the error term has a derivative $\lesssim 1$. On the other hand, by the same Proposition, we know that \begin{align}
\mu(S)-\mu(0)=\eps Sy_1+O(|S||\log\eps|(\eps^2+\norm{\theta}_{H^k}^2)).
\end{align}

%\noindent here the derivative of the error term in $(\theta,\eps)$ is $\lesssim |S|(\eps|\log\eps|+\frac{\norm{\theta}_{H^k}^2}{\eps}+\norm{\theta}_{H^k}|\log\eps|)$.

\noindent The product of the main terms here belongs to $\Span\{y_1\}$ and disappears thus under the projection operator $J$. In particular, we have \begin{align}\label{main term}
\norm{J\left((\mu(S)-\mu(0))\mu(0)\right)}_{H^{k-1}}\lesssim |S||\log\eps|(\eps^2+\norm{\theta}_{H^k}^2).
\end{align}

\noindent By \eqref{cor est 3} it holds that\begin{align}\label{est S}
\norm{\mu(S)-\mu(0)}_{H^{k-1}}\lesssim \eps|S|,
\end{align}

\noindent hence, using the bound \eqref{bd S} on $S_{\theta,\eps}$ we see that
\begin{align}
\norm{\mathcal{F}_2(S_{\theta,\eps})-\mathcal{F}_2(0)}_{H^{k-1}}&\lesssim \frac{1}{\beta_{\eps}}\bigg(\norm{J((\mu(S_{\theta,\eps})-\mu(0))\mu(0))}_{H^{k-1}}\\
&\qquad+\norm{\mu(S_{\theta,\eps})-\mu(0)}_{H^{k-1}}^2\bigg)\nonumber\\
&\lesssim \frac{|S_{\theta,\eps}|}{\beta_{\eps}}|\log\eps|(\eps^2+\norm{\theta}_{H^k}^2)+\frac{S_{\theta,\eps}^2\eps^2}{\beta_{\eps}}\\
&\lesssim |\log \eps |(\eps^2+\|\theta\|_{H^k}^2) + \beta_{\eps} \eps^2,
\end{align}
and thus, in particular,
\begin{equation}\label{313}
\frac1{\eps}\norm{\mathcal{F}_2(S_{\theta,\eps})-\mathcal{F}_2(0)}_{H^{k-1}} =o(1),
\end{equation}
for small $(\theta,\eps)\in \mathcal{M}$. The convergence of the last term is a consequence of \eqref{312}.

 In the case without surface tension, one has the same bound without the factor $\frac{1}{\beta_{\eps}}$, which still converges to $0$.

In either case, the observation implies that
\[
\mathcal{F}_2(\theta,\eps,S_{\theta,\eps})  = \mathcal{F}_2(\theta,\eps,0) +o(1).
\]
Next, we need to argue that $\mathcal{F}_2(S_{\theta,\eps})$ is still continuously Fr\'echet differentiable and, in fact, has the same derivative as $\mathcal{F}_2(0)$ at $0$. As all involved terms are continuously differentiable away from $0$, and $\mathcal{F}_2(0,\theta,\eps)$ is continuously Fr\'echet differentiable by   Propositions \ref{DeriCurvature}, \ref{bulkDiffable} and \ref{mu diffable}, it suffices to show that $\mathrm{D}_{\theta,\eps}(\mathcal{F}_2(S_{\theta,\eps})-\mathcal{F}_2(0))\rightarrow 0$ in $H^{k-1}$ as $(\theta,\eps)\to (0,0)$.

To account for the terms that disappear due to the projection, we rewrite \begin{align}
\mel\mathcal{F}_2(S)-\mathcal{F}_2(0)\\
&= -\frac{1}{\beta_{\eps}}J\left(2(\mu(S)-\mu(0))\mu(0) - \frac{\eps Sy_1}{2\pi} +(\mu(S)-\mu(0))^2\right)\\
& =-\frac1{\beta_{\eps}} J\left(2(\mu(S)-\mu(0)-\eps S y_1)\mu(0) +\eps S y_1 \left(\mu(0)-\frac1{2\pi}\right) +(\mu(S)-\mu(0))^2\right).
\end{align}
where the additional summand $\eps S y_1$ in the middle is annihilated by $J$.

Using the product rule, we then  compute the derivative and   estimate
\begin{align}
  \norm{\text{D}_{\eps}(\mathcal{F}_2(S_{\theta,\eps})-\mathcal{F}_2(0))}_{H^{k-1}} 
&\lesssim \left|\de_\eps\left(\frac{1}{\beta_{\eps}}\right)\right|\beta_{\eps} \|\F_2(S_{\theta,\eps})-\mathcal{F}_2(0)\|_{H^{k-1}}\\
&\quad +\frac{1}{ \beta_{\eps}}\norm{\mathrm{D}_{\eps}\big((\mu(S)-\mu(0)-\eps Sy_1)\mu(0)\big)}_{H^{k-1}}\nonumber\\
&\quad +\frac{1}{\beta_{\eps}}\norm{\mathrm{D}_{\eps}\left(\eps Sy_1\left(\mu(0)-\frac{1}{2\pi}\right)\right)}_{H^{k-1}}\\
&\quad +\frac{1}{ \beta_{\eps}}\norm{\mathrm{D}_{\eps}(\mu(S)-\mu(0))^2}_{H^{k-1}}\nonumber\\
&=:\mathrm{I}+\mathrm{II}+\mathrm{III}+\mathrm{IV},\nonumber
\end{align}

\noindent where $\mathrm{I}$ - $\mathrm{IV}$ stand for the terms in each line. 
(In the case without surface tension, the term $\mathrm{I}$ disappears, and the prefactor $\frac{1}{\beta_{\eps}}$ needs to be replaced with $1$.) Derivatives with respect to $\theta$ are estimated analogously. We omit the details.

Using the assumption in \eqref{324} and our last estimate \eqref{313}, we know that \begin{align}
\mathrm{I} \lesssim \frac1{\eps}\|\F_2(S_{\theta,\eps})-\F_2(0)\|_{H^{k-1}} \to 0,
\end{align}
as $(\theta,\eps)\to (0,0)$ in $\mathcal{M}$.

To estimate $\mathrm{II}$, we use the product rule and the fact that $H^{k-1}$ is closed under products to estimate 

\begin{align}
\mathrm{II}&\lesssim \frac{1}{ \beta_{\eps}}\biggl(\norm{\mu(S_{\theta,\eps})-\mu(0)-\eps S_{\theta,\eps} y_1}_{H^{k-1}}\norm{\mathrm{D}_{\eps}\mu(0)}_{H^{k-1}}\\
&\quad +\norm{\mu(0)}_{H^{k-1}}\norm{\mathrm{D}_{\eps}\bigl(\mu(S_{\theta,\eps})-\mu(0)-\eps S_{\theta,\eps} y_1\bigr)}_{H^{k-1}}\biggr).\label{sum II}
\end{align}

\noindent Invoking \eqref{cor est 2}, the first summand is controlled by $ \frac{1}{ \beta_{\eps}}|S_{\theta,\eps}||\log\eps|(\eps^{2}+\norm{\theta}_{H^k}^2)$, which is vanishing as a consequence of  the bound \eqref{bd S} on $S_{\theta,\eps}$.
For bounding the second summand, we use that $\mu$ is affine in $S$ and write
\begin{align}
\mel \mathrm{D}_{\eps}\bigr(\mu(S_{\theta,\eps})-\mu(0)-\eps S_{\theta,\eps} y_1\bigl)\\
&=S_{\theta,\eps}\mathrm{D}_{\eps}(\mu(1)-\mu(0)-\eps y_1)+(\mu(1)-\mu(0)-\eps y_1)\mathrm{D}_{\eps}S_{\theta,\eps}.
\end{align} 
Applying the bounds \eqref{bd S} and \eqref{310} on $S_{\theta,\eps}$ and    its derivative, and  the estimates \eqref{cor est 2} and \eqref{cor est 4eps}, we find that
\[
\|\mathrm{D}_{\eps}\bigr(\mu(S_{\theta,\eps})-\mu(0)-\eps S_{\theta,\eps} y_1\bigl)\|_{H^{k-1}} \lesssim \beta_{\eps} |\log\eps |\left(\eps + \frac{\|\theta\|^2_{H^k}}{\eps}\right),
\]
and thus, also the second summand vanishes as $(\theta,\eps)\to (0,0)$ in $\mathcal{M}$.

We proceed analogously with the third term. Using the product rule and \eqref{bd S} and \eqref{310}, we find
\[
\mathrm{III} \lesssim \|\mu(0) - \frac1{2\pi}\|_{H^{k-1}} +\eps \|\mathrm{D}_{\eps}\mu(0)\|_{H^k}.
\]
The right-hand side is asymptotically vanishing as a consequence of 
\eqref{cor est 1_eps} and the asymptotics $\mu(0) = \tilde \mu +\mathcal{E}(0)  = \frac1{2\pi} +o(1)$.

 Finally, to bound the term $\mathrm{IV}$, we use again that $\mu(S)$ is affine and estimate 
 \begin{align}
 \mathrm{IV}
&\lesssim \frac{1}{ \beta_{\eps}} \norm{\mu(S_{\theta,\eps})-\mu(0)}_{H^{k-1}} |\mathrm{D}_{\eps}S_{\theta,\eps}|\norm{\mu(1)-\mu(0)}_{H^{k-1}}\\
&\quad + \frac{1}{ \beta_{\eps}} \norm{\mu(S_{\theta,\eps})-\mu(0)}_{H^{k-1}}|S_{\theta,\eps}|\norm{\mathrm{D}_{\eps}(\mu(1)-\mu(0))}_{H^{k-1}}  .\nonumber
\end{align}
Making use of \eqref{bd S}, \eqref{310}, \eqref{cor est 2} and \eqref{cor est 4eps}, we find
\[
\mathrm{IV} \lesssim \eps |S_{\theta,\eps}|  \lesssim \eps\beta_{\eps},
\]
which is vanishing as $\eps \to 0$ by the virtue of \eqref{312}.

The case $\beta_\eps=0$ proceeds the same way, the term $\mathrm{I}$ drops out, and for the other terms, one can make the exact same calculations with the bounds \eqref{bd alt S} and \eqref{bd deri alt S} on $S^{\sigma=0}$.

\subsection{The implicit function theorem}
It remains to apply the implicit function theorem to the functional $(\theta,\eps)\mapsto \mathcal{F}_2(\theta,\eps,S_{\theta,\eps})$ defined in a small neighborhood of the origin $(0,0)$ in $\M$. It takes values in the space
\begin{align}
\mathcal{N}:=\{f\in H_{\sym}^{k-2}(\de B)/\R:\: \scalar{f}{y_1}=0\}
\end{align} 

\noindent (resp.\ $k-1$, if $\beta_\eps=0$), which is isomorphic to $T_0V^{k-2}$ (resp.\ $T_0V^{k-1}$) by Proposition \ref{tspace M} via the canonical identification.
To apply the implicit function theorem, we only need to show that the derivative (with respect to  $\theta$) of $\mathcal{F}_2(\theta,\eps,S_{\theta,\eps})$ is invertible from $T_0V^k$ to $\mathcal{N}$.
By the previous step, we only need to consider the derivative of $\mathcal{F}_2(\theta,\eps,0)$ at $(0,0)$, which in fact equals the derivative of $\mathcal{F}(\theta,\eps,0)$ at $(0,0)$, projected onto the orthogonal complement of $y_1$.

By the product rule and the Propositions \ref{DeriCurvature}, \ref{C1}, \ref{Lem:muDiffable}, and \ref{mu diffable}  this derivative, tested against $\delta\theta$, equals 
\[
\la \mathrm{D}_{\theta}\F(0,0,0),\delta \theta\ra  = -8 \omega \rho (\mathcal{L}\delta \theta -\delta \theta) +\frac{\omega }{2\pi^2}(\delta\theta -\frac12\widetilde{\mathcal{K}}_{0,0}^{-1}\delta \theta) -\delta \theta - \partial_{\tau}^2 \delta \theta,
\]
where $\omega$ was introduced in \eqref{300}.

 Using the Remarks \ref{SeriesDirToNeumann} and \ref{deri mu Fourier}, we may express this derivative as the Fourier multiplier
\[
\widehat{\la \dots \ra } (l) = \left(\omega\left(8\rho+\frac1{2\pi^2}\right)(1-|l|)-1+l^2\right)\widehat{\delta \theta}(l).
\]
Because our tangent vectors are symmetric, $\widehat{\delta \theta}(l)=\widehat{\delta \theta}(-l)$, see  Proposition \ref{tspace M}, it is enough to consider nonnegative wave numbers $l$. Furthermore, because they are mean-zero functions and orthogonal on $x_1=\cos(\alpha)$, it holds $\widehat{\delta\theta}(0)=\widehat{\delta\theta}(1)=0$, and we may hence restrict to $l\ge 2$.

 The derivative  is thus an invertible map from $T_0V^{k}$ to $\mathcal{N}$ if \begin{align}
\left|\omega\left(8\rho+\frac{1}{2\pi^2}\right)-(1+l)\right|\gtrsim \frac{l^2}{l-1}
\sim l,\end{align}

\noindent  for all $l\in\N_{\geq 2}$.

We remark that if $\omega=0$, this is always the case. In general,  invertibility is thus ensured precisely if
\[
\omega\left(8\rho+\frac1{2\pi^2}\right)\not\in \N_{\ge 3}.
\]

\noindent which follows from the assumption \eqref{322}. In the case without surface tension, one gets instead the Fourier multiplier
\begin{align}
\left(8\rho+\frac{1}{2\pi^2}\right)(1-l),
\end{align}
which is always invertible because its modulus is bounded below by $O(l)$.

 In either case, we obtain existence from the implicit function theorem.

\subsection{Asymptotic formulas}  

Next, we compute the asymptotics. First, note that (a posteriori!) we have that $\theta=\theta_{\eps}$ is differentiable with respect to $\eps$ and, in particular, $\norm{\theta_{\eps}}_{H^k}\lesssim \eps$.

By the definition of $W$ in \eqref{def W}, and the formulas for $S$ in \eqref{311} and \eqref{bd alt S} we have that \begin{align}
W&=\frac{1}{4\pi}\left(\log 8-\frac{1}{2}+\log\frac{1}{\eps}\right)+\frac{S}{2}\\
&=\frac{1}{4\pi}\left(\log 8-\frac{1}{2}+\log\frac{1}{\eps}\right)+\rho\pi+\frac{\beta_{\eps}\pi}{2} +o(1),
\end{align}

\noindent both for the case with and without surface tension.

Similarly, for the flux constant, we obtain via  Lemma \ref{asymp gamma} that \begin{align}
\gamma&=\frac{1}{2\pi}(\log8+\log\frac{1}{\eps}-2)-\frac{W}2 +o(1)\\
&=\frac{3}{8\pi}\log\left(\frac{8}{\eps}\right)-\frac{15}{16\pi}-\frac{\rho\pi}2-\frac{ \beta_{\eps}\pi}{4}+o(1).
\end{align}

\noindent Regarding the unknown constant in the equation $\F_2(S_{\theta_{\eps},\eps},\theta_{\eps},\eps) = \const$, which we denote by $\nu_{\eps}$ in what follows, we have that
\[
\nu_{\eps} = \F_2(0,\theta,\eps) +o(\eps)
\]
by the virtue of \eqref{313}. Averaging over $\partial B$, we may write
\[
\nu_{\eps} = \frac1{2\pi} \int_{\partial B} \left(\frac{\rho}{\beta_{\eps}} \lambda_{\theta_{\eps},\eps}^2 -\frac1{\beta_{\eps} }\mu_{\theta_{\eps},\eps}(0)^2 +h_{\theta_{\eps},\eps}\right)\dd \Ha^1 +o(\eps).
\]
Making now use of the asymptotics implied by Propositions \ref{DeriCurvature}, \ref{C1}, \ref{Lem:muDiffable}, and \ref{mu diffable} and invoking orthogonality properties, we deduce that
\[
\nu_{\eps} = \frac{1}{\beta_{\eps}}\left(4\rho -\frac1{4\pi^2}\right) + 1 +o(\eps).
\]
The error is differentiable and vanishing at $\eps=0$.

To compute the derivative of $\theta$ with respect to $\eps$, we first note
\[
0 = \left.\mathrm{D}_{\eps}\right|_{\eps=0} \left(\F_2(0,\eps,0)-\nu_{\eps}\right) +\la \left.\mathrm{D}_{\theta}\right|_{\theta=0} \F_2(\theta,0,0),\left.\partial_{\eps}\right|_{\eps=0}\theta_{\eps}\ra
\]
by our analysis in the previous subsection. In view of the above-mentioned propositions, the first term on the right-hand side is vanishing. Thanks to the invertibility of $\mathrm{D}_{\theta}\F(0,0,0)$, it follows that $\partial_{\eps} \theta_{\eps}$ is zero at $\eps=0$,
and thus
\[
\theta_{\eps} = o(\eps),
\]
by Taylor's theorem.

\subsection{Uniqueness and regularity}
For a given $S_{\eps,\theta}$ and $\eps$ such that the bounds \eqref{bd S}, \eqref{310} and \eqref{315} (resp.\  \eqref{bd alt S} and \eqref{bd deri alt S}) hold, the solution $\theta_{\eps}\in V^k$ is unique if $\eps$ is small enough by the implicit function theorem, provided that the extra condition $\norm{\theta}_{H^k}\leq\eps^\ell$ holds. 
%There is a slight subtlety here: In the uniqueness statement of the theorem,  uniqueness should hold for all $W$ (and therefore also for all $S$) fulfilling a bound, whereas here $S$ is a function of $\eps$.

We have shown in the first step that $S_{\eps,\theta}$ is uniquely determined through $\mathcal{F}_1(\theta,\eps,S_{\eps,\theta})=0$ for small $\eps$, as long as $S_{\theta,\eps} |\log \eps|\left(\eps^2 +\|\theta\|_{H^k}^2\right)=o(1)$ holds due to the bounds on the discarded second solution.

Now suppose that there is some other solution   $\tilde \theta,\tilde \varphi_{\mathrm{int}},\tilde  \varphi_{\mathrm{ext}}, \tilde  W,\tilde \gamma$ fulfilling \eqref{32}-\eqref{52} that is not the one we have constructed. Then $\tilde \varphi_{\mathrm{int}}$ is uniquely determined by $\tilde \theta$ as solutions to the Laplace equation are unique. Moreover, $\tilde  \varphi_{\mathrm{ext}}$ gives rise to some $\tilde \mu$, and by applying the fundamental solution $\mathcal{K}_{\tilde \theta,\eps}$ as in Subsection \ref{reform}, this $\tilde \mu$ also uniquely determines $\tilde  \varphi_{\mathrm{ext}}$. By Lemma \ref{bd det S}, this $\tilde \mu$ must be of the form $\mu(\tilde S)$ for some $\tilde S$ fulfilling $|\tilde S|\lesssim |\tilde W| + |\log\eps|$
and thus
\begin{align}
|\tilde S||\log \eps|\left(\eps^2 +\|\theta\|_{H^k}^2\right) \lesssim |\tilde W ||\log \eps|\left(\eps^2 +\|\theta\|_{H^k}^2\right) +o(1) =2C_*,
\end{align} 
by our assumption on $\tilde W$ in \eqref{321}, presuming $C_*$ is chosen sufficiently small.

Since it must hold that $\mathcal{F}_1(\tilde\theta,\eps,\tilde{S})=0$, we have $\tilde S=S(\tilde\theta,\eps)$ by the first step, where $S(\cdot,\cdot)$ is the same function for all solutions $(\tilde\theta,\tilde \varphi_{in},\tilde \varphi_{out},\tilde W,\tilde \gamma)$.

But this implies by the implicit function theorem that $\tilde{S}$ and $\tilde\theta$ must be the solution we constructed, contradicting non-uniqueness.

The smoothness of $\theta$ follows from the fact that $k\geq 5$ was arbitrary, and hence $\Omega_\theta$ is smooth. The smoothness of $\vin$ and $\vout$ follows from classical elliptic regularity, see e.g.\ \cite{gilbarg1977elliptic}.\hfill\qedsymbol

\begin{funding}	This work was produced while the first author was a doctoral student at the University of M\"unster. It  is funded by the Deutsche Forschungsgemeinschaft (DFG, German Research Foundation) under Germany's Excellence Strategy EXC 2044--390685587, Mathematics M\"unster: Dynamics -- Geometry  -- Structure, and through grant number 531098047. 
\end{funding}
\begin{ack} The authors thank Franck Sueur and Lukas Niebel for valuable discussions.
\end{ack}

	\bibliography{thinRings}
	\bibliographystyle{acm}

\end{document}